\newcommand*\mcup{\mathbin{\mathpalette\mcupinn\relax}}
\newcommand*\mcupinn[2]{\vcenter{\hbox{$\mathsurround=0pt
  \ifx\displaystyle#1\textstyle\else#1\fi\bigcup$}}}
\numberwithin{equation}{section}
\theoremstyle{plain}
\newtheorem{theorem}{Theorem}[section]
\newtheorem{lemma}[theorem]{Lemma}
\newtheorem{corollary}[theorem]{Corollary}
\newtheorem*{theorem*}{Theorem 0}
\newtheorem*{mainthma}{Theorem A}
\newtheorem*{mainthmb}{Theorem B}
\newtheorem*{maincorc}{Corollary C}
\newtheorem*{claim*}{Claim}
\theoremstyle{definition}
\newtheorem{definition}[theorem]{Definition}
\newtheorem{remark}[theorem]{Remark}
\newcommand{\adly}{additively}
\newcommand{\adive}{additive}
\newcommand{\Adive}{Additive}
\newcommand{\al}{\alpha}
\newcommand{\ga}{\gamma}
\newcommand{\be}{\beta}
\newcommand{\eps}{\epsilon}
\newcommand{\newtran}{\sigma}
\newcommand{\condone}{(1)}
\newcommand{\condtwo}{(2)}
\newcommand{\condthree}{(3)}
\newcommand{\R}{\mathbb{R}}
\newcommand{\Q}{\mathbb{Q}}
\newcommand{\Qz}{\mathbb{Q}_{\neq 0}}
\newcommand{\Qd}{\mathbb{Q}^d}
\newcommand{\Qdz}{\mathbb{Q}_{\neq 0}^d}
\newcommand{\Z}{\mathbb{Z}}
\newcommand{\Zz}{\mathbb{Z}\setminus \{0\}}
\newcommand{\Zt}{\mathbb{Z}^2}
\newcommand{\Ztz}{\mathbb{Z}_{\neq 0}^2}
\newcommand{\Zd}{\mathbb{Z}^d}
\newcommand{\Zdz}{\mathbb{Z}_{\neq 0}^d}
\newcommand{\N}{\mathbb{N}}
\newcommand{\arbclass}{\mathcal{X}}
\newcommand{\syndetic}{\mathcal{S}}
\newcommand{\thick}{\mathcal{T}}
\newcommand{\pws}{\mathcal{PS}}
\newcommand{\central}{\mathcal{C}}
\newcommand{\density}{\mathcal{D}}
\newcommand{\ip}{\text{IP}}
\newcommand{\ipr}{\text{IP}_{r}}
\newcommand{\ipnaught}{\ip_{\text{0}}}
\newcommand{\ipclass}{\mathcal{IP}}
\newcommand{\iprclass}{\mathcal{IP}_{r}}
\newcommand{\ipnaughtclass}{\ipclass_{\text{0}}}
\newcommand{\ipset}{\text{IP}}
\newcommand{\iprset}[1]{\ipset_{#1}}
\newcommand{\finiteproducts}{\text{FP}}
\newcommand{\finitesums}{\text{FS}}
\newcommand{\folner}{F{\o}lner }
\newcommand{\dstar}{d^*}
\newcommand{\euclid}[1]{|{#1}|}
\newcommand{\psstar}{PS$^*$}
\newcommand{\tone}{\circledast}
\newcommand{\toneop}{\tone_{\text{op}}}
\newcommand{\ttwo}{\odot}
\newcommand{\ttwoop}{\odot_{\text{op}}}
\newcommand{\rone}{\psi}
\newcommand{\ronezd}{\rone(\Zd)}
\newcommand{\ronezdz}{\rone(\Zdz)}
\newcommand{\roneqd}{\rone(\Qd)}
\newcommand{\roneqdz}{\rone(\Qdz)}
\newcommand{\roneright}{\psi_\text{r}}
\newcommand{\ronerightqd}{\roneright(\Qd)}
\newcommand{\rtwo}{\varphi}
\newcommand{\rtwozd}{\rtwo(\Zd)}
\newcommand{\rtwoqd}{\rtwo(\Qd)}
\newcommand{\rtworight}{\rtwo_\text{r}}
\newcommand{\tthree}[1]{\tone_{[{#1}]}}
\newcommand{\rthree}[1]{\rone_{[{#1}]}}
\newcommand{\id}{\text{Id}}
\newcommand{\idd}{\text{Id}}
\newcommand{\tgen}{\cdot}
\newcommand{\matdz}{M_d(\Z)}
\newcommand{\matdq}{M_d(\Q)}
\newcommand{\matdR}{M_d(R)}
\newcommand{\gldq}{\text{GL}_d(\Q)}
\newcommand{\gldz}{\text{GL}_d(\Z)}
\newcommand{\gldR}{\text{GL}_d(R)}
\newcommand{\nullspace}{\text{Null}}
\newcommand{\nullcoll}{\mathcal{N}}
\newcommand{\zspan}{\text{Span}_\Z}
\newcommand{\qspan}{\text{Span}_\Q}
\newcommand{\centshort}{\text{Cen}}
\newcommand{\normshort}{\text{Nor}}
\newcommand{\aut}{\text{Aut}}
\newcommand{\isom}{\text{Iso}}
\newcommand{\normmin}[1]{\|#1 \|_{\text{min}}}
\newcommand{\normmax}[1]{\|#1 \|_{\text{max}}}
\newcommand{\discrims}{\mathscr{B}}
\newcommand{\qdiscrims}{\widetilde{\mathscr{B}}}
\newcommand{\finitediscrims}{\mathscr{F}}
\newcommand{\bigmults}{\mathscr{A}}
\newcommand{\lat}{\mathcal{L}}
\newcommand{\conj}{\mathscr{J}}
\newcommand{\genpoly}{\mathscr{G}}
\newcommand{\quat}{\mathbb{H}}
\newcommand{\quatz}{\mathbb{H}(\Z)}
\newcommand{\subsets}[1]{\mathcal{P}({#1})}
\newcommand{\finitesubsets}[1]{\mathcal{P}_f({#1})}
\newcommand{\seminplus}{(\N,+)}
\newcommand{\semistimes}{(S,\tgen)}
\newcommand{\matrixx}[4]{\left( \begin{array}{cc} #1 & #2 \\ #3 & #4 \end{array} \right)}
\newcommand{\cube}[1]{C_{#1}}
\newcommand\restr[2]{{ \left.\kern-\nulldelimiterspace #1 \right|_{#2}}}
\author[V. Bergelson]{Vitaly Bergelson}
\thanks{The first author gratefully acknowledges the support of the NSF under grant DMS-1500575.}
\author[D. Glasscock]{Daniel Glasscock}
\address{Department of Mathematics, The Ohio State University, 231 W. 18th Ave., Columbus, OH 43210}
\email{vitaly@math.osu.edu and glasscock.4@math.osu.edu}
\title[Multiplicative richness of additively large sets]{Multiplicative richness of additively large sets in $\Zd$}
\begin{document}

\begin{abstract}
In their proof of the IP Szemer\'edi theorem, a far reaching extension of the classic theorem of Szemer\'edi on arithmetic progressions, Furstenberg and Katznelson \cite{furstenbergkatznelsonipszem} introduced an important class of additively large sets called $\ip_r^*$ sets  which underlies recurrence aspects in dynamics and is instrumental to enhanced formulations of combinatorial results. The authors recently showed that additive $\ip_r^*$ subsets of $\Z^d$ are multiplicatively rich with respect to every multiplication on $\Zd$ without zero divisors (e.g. multiplications induced by degree $d$ number fields). In this paper, we explain the relationships between classes of multiplicative largeness with respect to different multiplications on $\Zd$. We show, for example, that in contrast to the case for $\Z$, there are infinitely many different notions of multiplicative piecewise syndeticity for subsets of $\Z^d$ when $d \geq 2$. This is accomplished by using the associated algebra representations to prove the existence of sets which are large with respect to some multiplications while small with respect to others. In the process, we give necessary and sufficient conditions for a linear transformation to preserve a class of multiplicatively large sets. One consequence of our results is that additive $\ip_r^*$ sets are multiplicatively rich in infinitely many genuinely different ways. We conclude by cataloging a number of sources of additive $\ip_r^*$ sets from combinatorics and dynamics.
\end{abstract}

\maketitle

\section{Introduction}\label{sec:intro} 

\subsection{Motivation}

A subset of $\Zd$ is called \emph{\adly{} $\ipr$}, $r \in \N$, if it contains a set of the form
\begin{align}\label{eqn:defoffinitesumsset}\text{FS}(x_1, \ldots, x_r) = \left\{ \sum_{i \in I} x_i \ \middle| \ \emptyset \neq I \subseteq \{1, \ldots, r\} \right\}, \quad x_1, \ldots, x_r \in \Zd.\end{align}
A subset of $\Zd$ is called \emph{\adly{} $\ipr^*$} if it has non-empty intersection with every \adive{} $\ipr$ set in $\Zd$.

\Adive{} $\ipr^*$ sets first appeared in the work of Furstenberg and Katznelson \cite{furstenbergkatznelsonipszem} on the IP multidimensional Szemer\'{e}di theorem, and they appeared implicitly in \cite[Section 1]{bergelsonleibmanams} in connection with the multidimensional polynomial van der Waerden theorem. Loosely speaking, given $A \subseteq \Zd$ with positive upper Banach density (resp. a finite partition of $\Zd$) and $\ell \in \N$, there exists $r \in \N$ for which the set of mesh sizes $m \in \N$ of finite lattices $z+ \{m, 2m, \ldots, \ell m\}^d$ contained in $A$ (resp. contained in some cell of the partition) is \adly{} $\ipr^*$ in $\N$.

The main results in \cite{furstenbergkatznelsonipszem} pertaining to measurable multiple recurrence and in \cite[Section 1]{bergelsonleibmanams} pertaining to topological polynomial multiple recurrence are formulated in terms of \adive{} $\ip^*$ sets -- the infinitary analogue of $\ipr^*$ -- with the precise role of $\ipr^*$ sets explained in final remarks in the former and implicit in the proofs in the latter. An example of the strength of the $\ipr^*$-formulations was demonstrated in \cite{blzpaper}, where it is shown that the set of prime numbers minus one, $\mathbb{P}-1$, is \adly{} $\ipr$ for all $r \in \N$, and hence that there exist finite lattices in the set $A$ (or in some cell of the partition) mentioned above with mesh size one less than a prime. (Their work also shows that the same result holds for $\mathbb{P}+1$ but it cannot hold for other translate of $\mathbb{P}$.) More general $\ipr^*$-formulations of main theorems from \cite{furstenbergkatznelsonipszem} and \cite{bergelsonleibmanams} can be found in \cite[Section 6]{BGpaperone}.

Sets of return times in measure theoretical and topological dynamics underpin the $\ipr^*$ structure in the results of the previously mentioned works and provide a good source of concrete examples of \adive{} $\ipr^*$ sets. For example, the first author made use of the Hales-Jewett theorem to prove in \cite{BergelsonSurveytwoten} that for all $f \in \R[x]$ with $f(0)=0$ and all $\eps > 0$, there exists $r \in \N$ such that the set $\{n \in \Z \ | \ \|f(n)\| < \eps \}$ is \adly{} $\ipr^*$, where $\| \cdot \|$ denotes the distance to the nearest integer. The polynomial Hales-Jewett theorem \cite{bergelsonleibmanannals} was leveraged recently in \cite{BLiprstarcharacterization} to give far-reaching generalizations; as an example, there exists $r \in \N$ for which the set
\begin{align}\label{eqn:impressiveexample} \Big\{(n,m) \in \Z^2 \ \Big| \ \big\| \sqrt{2} m^3 \big[ \sqrt[4]{5} n^6 + \pi n m^7 [ \sqrt[8]{9} n^{10} ] \big] \big\| < \eps \Big\},\end{align}
is \adly{} $\ipr^*$ in $\Z^2$. This is a consequence of a much more general result from \cite{BLiprstarcharacterization}: return times of a point to a neighborhood of itself in a nilsystem\footnote{A nilsystem is a system of the form $(X,T)$ where $X$ is a compact
homogeneous space of a nilpotent Lie group $G$ and $T$ is a translation of $X$ by an element of $G$.} is an \adive{} $\ipr^*$ set in $\Z$ and, moreover, this recurrence property characterizes so-called pre-nilsystems.

\Adive{} $\ipr^*$ sets are ``large'' in many senses. They possess a filter property: given $r_1$ and $r_2$ in $\N$, there exists $r_3 \in \N$ such that the intersection of any $\iprset {r_1}^*$ set with any $\iprset {r_2}^*$ set is an $\iprset{r_3}^*$ set; see \cite[Proposition 2.5]{BRcountablefields}. \Adive{} $\ipr^*$ sets are ``additively large:'' they have lower density bounded from below by $2^{1-r}$; see \cite[Theorem 10.4]{furstenbergkatznelsonipszem}. \Adive{} $\ipr^*$ sets are also ``additively rich:'' they are syndetic (have bounded gaps) and central (see Definition \ref{def:defofcentral}), and hence they contain very many solutions to any partition regular system of linear equations; see \cite[Chapter 9]{furstenberg-book}. Perhaps more surprisingly, \adive{} $\ipr^*$ sets in $\N$ are also ``multiplicatively large:'' they have non-empty intersection with every multiplicatively central set in $\N$; see \cite[Theorem 3.5]{berghind-onipsets}. This implies that any \adive{} $\ipr^*$ set $A \subseteq \N$ is \emph{multiplicatively syndetic}: there exists a finite set $F \subseteq \N$ such that for all $n \in \N$, $(F \cdot n) \cap A \neq \emptyset$.

It is important to note the ways in which additive $\ipr^*$ sets are larger than their infinitary analogues, additive $\ip$ sets. A subset of $\Zd$ is called \emph{\adly{} $\ip$} if it contains a set of the form
\begin{align}\label{eqn:defofsumsset}\text{FS}(x_n)_{n=1}^\infty = \left\{ \sum_{i \in I} x_i \ \middle| \ \text{finite, non-empty } I \subseteq \N \right\}, \quad (x_n)_{n=1}^\infty \subseteq \Zd,\end{align}
and it is called an \emph{\adive{} $\ip^*$} set if it has non-empty intersection with every \adive{} $\ip$ set in $\Zd$. Like $\ipr^*$ sets, additive $\ip^*$ sets possess a filter property (this follows from the classic theorem of Hindman \cite{hindmanoriginal}) and are additively syndetic and central. In contrast to $\ipr^*$ sets, however, additive $\ip^*$ sets need not be multiplicatively syndetic, as was shown for subsets of $\N$ in \cite[Theorem 3.6]{berghind-onipsets}. We improve on this result with Theorem \ref{thm:addipdoesnotimplymultsyndetic} by showing that there are additive $\ip^*$ sets in $\Zd$ which are not multiplicatively syndetic with respect to any proper multiplication (defined in the next paragraph) on $\Z^d$.

A binary operation $\tone: \Zd \times \Zd \to \Zd$ is called a \emph{proper multiplication on $\Zd$} if it makes the abelian group $(\Zd,+)$ into a (not-necessarily unital or commutative) ring $(\Zd,+,\tone)$ without zero divisors. The family of rings $\big\{\Z\big[\sqrt c \big] \ | \ c \in \Z \text{ not a square} \big\}$ yield concrete examples of proper multiplications on $\Zt$ (under the usual identification of elements of these rings with $\Zt$). The Lipschitz integral quaternions
\[\quatz = \big\{ x_1 + ix_2 + jx_3 + kx_4 \ \big| \ x_1, x_2, x_3, x_4 \in \Z \big\} \]
provide an example of a non-commutative proper multiplication on $\Z^4$. Since a proper multiplication is determined by the pairwise products of the elements of a basis, there are only countably many proper multiplications on $\Zd$.

Let $\tone$ be a proper multiplication on $\Zd$. A subset $A \subseteq \Zdz = \Zd \setminus \{0\}$ is called \emph{\psstar{} with respect to $\tone$} if for all finite $F \subseteq \Zdz$, there exists a finite $G \subseteq \Zdz$ such that for all $x \in \Zdz$, there exists $g \in G$ for which $F \tone g \tone x \subseteq A$. (This is equivalent to the set $A$ having non-empty intersection with all subsets of $\Zdz$ which are piecewise syndetic with respect to $\tone$; see Definition \ref{def:syndthick} and Lemma \ref{lem:altcharofpstar}). The authors recently proved the following theorem, improving on \cite[Theorem 3.5]{berghind-onipsets}.

\begin{theorem}[{\cite[Corollary 6.3]{BGpaperone}}]\label{thm:thmfrombgotherpaper}
Let $A \subseteq \Zd$ be \adly{} $\ipr^*$. The set $A \setminus \{0\}$ is multiplicatively \psstar{} with respect to every proper multiplication on $\Zd$.
\end{theorem}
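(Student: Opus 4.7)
The plan is to invoke the characterization of multiplicative $\mathrm{PS}^*$ quoted just before the theorem and reduce to showing that every multiplicatively piecewise syndetic set $B \subseteq \Zdz$ contains a full additive $\ipr$ set. The $\ipr^*$ hypothesis on $A$ then gives a point of $A$ inside $B$, and since $B \subseteq \Zdz$ this point automatically lies in $A \setminus \{0\}$. The passage from the multiplicative structure of $B$ to additive $\ipr$ structure inside $B$ will combine two ingredients: a finitary Hindman-type Ramsey argument that collapses multiplicative ``noise'' to a single scalar, and the distributivity of $\tone$ over $+$, which lets that scalar be cancelled off an entire $r$-tuple of additive generators at once.

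Fix a multiplicatively piecewise syndetic $B$, witnessed by a finite $F_0 \subseteq \Zdz$ for which $F_0 \tone B$ is multiplicatively thick. Choose any nonzero $v \in \Zd$ and put $e_i := 2^{i-1} v \in \Zdz$ for $i = 1, \dots, R$; every nonempty FS-sum is then $\big(\sum_{i \in I} 2^{i-1}\big) v \ne 0$. Thickness produces $x \in \Zdz$ such that for every nonempty $I \subseteq [R]$ there exist $f_I \in F_0$ and $b_I \in B$ with
\[
\Big(\sum_{i \in I} e_i\Big) \tone x \;=\; f_I \tone b_I.
\]
A standard finitary consequence of Hindman's theorem (obtained by compactness, via the $I \leftrightarrow \sum_{i \in I} 2^{i-1}$ identification of nonempty subsets of $[R]$ with positive integers up to $2^R - 1$) then guarantees that, for $R = R(r, |F_0|)$ large enough, the $|F_0|$-coloring $I \mapsto f_I$ admits disjoint nonempty $I_1, \dots, I_r \subseteq [R]$ for which $f_{I(J)}$ equals a common value $f \in F_0$ for every nonempty $J \subseteq [r]$, where $I(J) := \bigcup_{j \in J} I_j$.

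Set $z_j := b_{I_j} \in B$. The $J = \{j\}$ case of the display above reads $\big(\sum_{i \in I_j} e_i\big) \tone x = f \tone z_j$, and then distributivity of $\tone$ over $+$ yields, for every nonempty $J \subseteq [r]$,
\[
f \tone \sum_{j \in J} z_j \;=\; \sum_{j \in J} f \tone z_j \;=\; \Big(\sum_{i \in I(J)} e_i\Big) \tone x \;=\; f \tone b_{I(J)}.
\]
Since $\tone$ has no zero divisors and $f \ne 0$, left multiplication by $f$ is injective on $(\Zd, +)$, so $\sum_{j \in J} z_j = b_{I(J)} \in B$. Hence $\text{FS}(z_1, \dots, z_r) \subseteq B$, and the $\ipr^*$ property of $A$ closes the argument. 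The pivotal and essentially only non-routine step is this last display: the Ramsey step is standard, but the real gain is that distributivity cancels $f$ simultaneously off all $r$ generators, so the argument never needs commutativity of $\tone$ nor invertibility of $f$ — only distributivity of $\tone$ over $+$ and left-cancellation, both of which are intrinsic to any proper multiplication on $\Zd$.
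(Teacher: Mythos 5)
Your conceptual plan---reduce to showing that every multiplicatively piecewise syndetic $B \subseteq \Zdz$ contains an additive $\text{FS}(z_1,\ldots,z_r)$ set, obtained from left thickness plus the finite unions theorem, and then appeal to the $\ipr^*$ hypothesis---is correct and is the standard route to this theorem. However, you have misquoted Definition \ref{def:syndthick}: piecewise syndeticity of $B$ is witnessed by a finite $F_0 = \{s_1,\ldots,s_k\} \subseteq \Zdz$ for which $\bigcup_i s_i^{-1} \tone B$ is (left) thick, where $s_i^{-1} \tone B = \{ y \in \Zdz \mid s_i \tone y \in B\}$; it is \emph{not} witnessed by $F_0 \tone B$ being thick. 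These are genuinely different, and for non-commutative $\tone$ (e.g.\ the quaternionic multiplication on $\Z^4$) the latter does not obviously follow from the former, so your claim that the argument ``never needs commutativity'' is undercut at exactly this point. As a result your display $\big(\sum_{i \in I} e_i\big) \tone x = f_I \tone b_I$ has the $f_I$ on the wrong side; it should read
\[ f_I \tone \Big(\sum_{i \in I} e_i\Big) \tone x = b_I \in B, \]
where $f_I \in F_0$ is the witnessing element.

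This is a fixable slip rather than a structural flaw, and once repaired the proof in fact simplifies. After the finite unions step (which you invoke correctly) produces disjoint nonempty $I_1,\ldots,I_r \subseteq \{1,\ldots,R\}$ with a common color $f \in F_0$ for all unions, set $z_j := b_{I_j} = f \tone \big(\sum_{i \in I_j} e_i\big) \tone x \in B$. Two-sided distributivity of $\tone$ over $+$ and the disjointness of the $I_j$ then give, for every nonempty $J \subseteq \{1,\ldots,r\}$,
\[ \sum_{j \in J} z_j = f \tone \Big(\sum_{i \in I(J)} e_i\Big) \tone x = f_{I(J)} \tone \Big(\sum_{i \in I(J)} e_i\Big) \tone x = b_{I(J)} \in B, \]
which is a direct equality---no cancellation by $f$ is needed at all, and the no-zero-divisor hypothesis is used only to make $(\Zdz,\tone)$ a semigroup. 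The remaining steps (containment of the resulting $\text{FS}$ set in $B \subseteq \Zdz$, hence intersection with $A \setminus \{0\}$) are sound.
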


Thus, in the sense of Theorem \ref{thm:thmfrombgotherpaper}, the set defined in (\ref{eqn:impressiveexample}) above is multiplicatively large with respect to all proper multiplications on $\Zt$; in particular, it is multiplicatively large with respect to all of the multiplications induced on $\Z^2$ from the rings $\big\{\Z[\sqrt c] \ | \ c \in \Z \text{ not a square} \big\}$. In an effort to understand the implications of a set being multiplicatively large with respect to \emph{all} of these multiplications, one is led naturally to ask about the relationships between the various notions of ``multiplicatively \psstar{}'' induced by each.

More generally, one is led to ask about the relationship amongst notions of multiplicative largeness for subsets of $\Zdz$ with respect to the various proper multiplications on $\Zd$. This question motivates the main results in this paper.

\subsection{Statement of results}
Let $\tone$ be a proper multiplication on $\Zd$, and denote by $\pws^*(\tone)$ the class of subsets of $\Zdz$ which are \psstar{} with respect to $\tone$. The following theorem sheds light on the statement ``\psstar{} with respect to every proper multiplication'' appearing in Theorem \ref{thm:thmfrombgotherpaper} by describing how the classes $\pws^*(\tone)$ and $\pws^*(\ttwo)$ relate for different proper multiplications $\tone$ and $\ttwo$ on $\Zd$. 

\begin{mainthma}\label{thm:maintheorema}
Let $\tone, \ttwo$ be proper multiplications on $\Zd$. The classes $\pws^*(\tone)$ and $\pws^* (\ttwo)$ are equal if and only if there exist $v, w \in \Zdz$ such that for all $x, y \in \Zdz$,
\begin{align}\label{eqn:mainconditiontheorema}x \tone v \tone y = x \ttwo w \ttwo y.\end{align}
\end{mainthma}

We will prove that Theorem A holds not only for the class $\pws^*$, but for many other classes of multiplicatively large sets: syndetic sets, piecewise syndetic sets, and sets with positive upper Banach density, among others. This is accomplished in two steps. First, we interpret the condition in (\ref{eqn:mainconditiontheorema}) in terms of the images of $\Zd$ under the representations of the multiplications involved. Then, we use this condition to construct sets which are ``large'' (multiplicatively thick) with respect to a given set of proper multiplications and ``small'' (not multiplicatively piecewise syndetic) with respect to others; see Lemma \ref{lem:equivconditionsforaligned} and Theorem \ref{thm:thickbutwithoutdensity}.

In the course of proving Theorem A, we will show that the classes $\pws^*(\tone)$ and $\pws^*(\ttwo)$ either coincide or are in general position. Because \adive{} $\ipr^*$ sets belong to both of these classes, this result furthers our understanding of the multiplicative largeness of \adive{} $\ipr^*$ sets. We will show, for example, that no two of the multiplications on $\Zt$ arising from the rings $\Z[\sqrt c]$, $c \in \Z$ not a square, satisfy (\ref{eqn:mainconditiontheorema}), and thereby show that the set in (\ref{eqn:impressiveexample}) is multiplicatively large in countably many distinct ways.

A subset of $\Zdz$ is called \emph{multiplicatively $\ipr$ with respect to $\tone$} if it contains a set of the form in (\ref{eqn:defoffinitesumsset}) where addition is replaced by $\tone$ and the indices in the product are taken in increasing order (see Definition \ref{def:ipstructure}). Denote by $\iprclass^*(\tone)$ the class of subsets of $\Zdz$ which have non-empty intersection with all subsets of $\Zdz$ which are multiplicatively $\ipr$ with respect to $\tone$. Denote by $\toneop$ the opposite operation of $\tone$, defined by $x \toneop y = y \tone x$.

\begin{mainthmb}\label{thm:maintheoremb}
Let $\tone, \ttwo$ be proper multiplications on $\Zd$. For all $r \geq 2$, the classes $\iprclass^*(\tone)$ and $\iprclass^*(\ttwo)$ are equal if and only if $\tone = \ttwo$ or $\tone = \ttwoop$.
\end{mainthmb}

We will prove in addition to Theorem B a version of it for multiplicative $\ip^*$ sets (defined in analogy with (\ref{eqn:defofsumsset})), showing that a proper multiplication on $\Zd$ is uniquely determined by its family of multiplicative $\ip^*$ sets. This is accomplished by constructing sets which are multiplicatively $\ipset$ with respect to $\tone$ which do not contain solutions to the equation $x \ttwo y = z$.

As a corollary of Theorem A, we derive necessary and sufficient conditions for a $\Z$-linear transformation $T: \Zd \to \Zd$ to preserve the class of \psstar{} sets with respect to a proper multiplication $\tone$ on $\Zd$: $T \big( \pws^*(\tone) \big) \subseteq \pws^*(\tone)$. Just as for the previous results, we will prove several variants of this theorem for various classes of multiplicative largeness.

\begin{maincorc}
Let $\tone$ be a proper multiplication on $\Zd$, and let $T: \Zd \to \Zd$ be $\Z$-linear with non-zero determinant. The map $T$ preserves the class $\pws^*(\tone)$ if and only if for all $x \in \Zdz$, there exists a non-zero $c \in \Z$ and $w \in \Zdz$ such that for all $y \in \Zdz$,
\begin{align}\label{eqn:mainconditioncorollaryc}cT(x \tone y) = w \tone (Ty).\end{align}
\end{maincorc}

Reformulating (\ref{eqn:mainconditioncorollaryc}) in terms of the ring representation of $(\Zd,+,\tone)$ will allow us, in many cases, to write down explicitly the set of matrices satisfying it. Understanding the collection of transformations that preserve the property of being a \psstar{} set with respect to a particular multiplication provides a geometric perspective on this notion of multiplicative largeness.

Theorem \ref{thm:thmfrombgotherpaper} gives that \adive{} $\ipr^*$ sets in $\Zd$ are multiplicatively \psstar{} with respect to all proper multiplications on $\Zd$, and Theorem A shows that the classes of multiplicatively \psstar{} sets for various proper multiplications on $\Zd$ are, predominantly, in general position. Thus, the results in this paper serve to enhance the conclusions of those results which yield \adive{} $\ipr^*$ sets in $\Zd$. One natural source of such sets comes from Diophantine approximation, as was evidenced above by the set defined in (\ref{eqn:impressiveexample}). More generally, the modulo 1 return times to zero of any \emph{constant-free generalized polynomial} form an additive $\ipr^*$ set (see Section \ref{sec:applications}).

Times of multiple recurrence of sets of positive density provide another source of $\ipr^*$ sets. Given a set $A \subseteq \Z^d$ of positive additive upper Banach density and $\Z$-linear transformations $T_1, \ldots, T_k: \Z^d \to \Z^d$, there exists $r \in \N$ for which the set
\[R = \big\{ z \in \Z^d \ \big | \ (A - T_1 z) \cap \cdots \cap (A- T_k z) \neq \emptyset \big\}\]
is \adly{} $\ipr^*$; see Theorem \ref{thm:multiszem}. This is an enhanced multidimensional version of the classic Szemer\'edi theorem on arithmetic progressions due to Furstenberg and Katznelson \cite{furstenbergkatznelsonipszem}. By Theorem \ref{thm:thmfrombgotherpaper}, the set $R \setminus \{0\}$ is multiplicatively \psstar{} with respect to all proper multiplications on $\Zd$. In particular (Lemma \ref{lem:basicfactsofdensity} \ref{item:combcharofpwsstar}), for all proper multiplications $\tone$ on $\Zdz$ and for all finite sets $F \subseteq \Zdz$, there exists a multiplicatively syndetic set $S \subseteq \Zdz$ for which $F \tone S \subseteq R$.

The paper is organized as follows. We begin in Section \ref{sec:defs} by defining the relevant notions of largeness for subsets of semigroups. In Section \ref{sec:ringreps}, we develop the algebra necessary for the proofs of the main results, and in Section \ref{sec:earlyexamples}, we give concrete examples of proper multiplications and the spaces associated with their representations. Proofs of Theorems A, B, and Corollary C appear in sections \ref{sec:manymults}, \ref{sec:maintheoremtwo}, and \ref{sec:proofofcorollary}, respectively. We conclude in Section \ref{sec:applications} by giving a combinatorial characterization of $\pws^*$ and cataloging several sources of \adive{} $\ipr^*$ sets.

\subsection{Acknowledgements} The authors would like to thank Daniel Shapiro for useful discussions regarding $\Q$-algebras.  Thanks is also due to Rostislav Grigorchuk and Mark Sapir for their help with Lemma \ref{lem:nonamenable}.

\section{Classes of largeness in semigroups}\label{sec:defs}  

In this section, we define several notions of largeness for subsets of semigroups. The best general references are \cite[Chapter 9]{furstenberg-book} and \cite[Section 1]{BHabundant}, though we will avoid the machinery of ultrafilters in this paper. While the results in this section are stated for a general semigroup $\semistimes$, there are two semigroups in particular to keep in mind: $(\Zd,+)$ and $(\Zdz,\tone)$, where $\Zdz = \Zd \setminus \{0\}$ and $\tone$ is a proper multiplication on $\Zd$ (see the beginning of Section \ref{sec:ringreps}).

Denote by $\N$ the set of natural numbers $\{1, 2, \ldots\}$. For $S$ a set, denote by $\finitesubsets S$ and $\subsets S$ the collections of all finite subsets (including the empty set) and all subsets of $S$, respectively. For $\semistimes$ a semigroup, $x \in S$, and $A \subseteq S$, let $x\tgen A$ denote $\{x \tgen a \ | \ a \in A \}$ and $x^{-1}\tgen A$ denote $\{s \in S \ | \ x \tgen s \in A \}$; the right handed versions $A \tgen x$ and $A \tgen x^{-1}$ denote the right handed analogues.

\begin{definition}\label{def:syndthick}
Let $\semistimes$ be a semigroup and $A \subseteq S$.
\begin{enumerate}[label=(\Roman*)]
\item $A$ is (right) \emph{syndetic} if there exist $s_1, \ldots, s_k \in S$ such that $S = s_1^{-1} \cdot A \cup \cdots \cup s_k^{-1} \cdot A$.
\item $A$ is (left) \emph{thick} if for all $F \in \finitesubsets S$, there exists $x \in S$ for which $F \cdot x \subseteq A$.
\item $A$ is (right) \emph{piecewise syndetic} if there exist $s_1, \ldots, s_k \in S$ such that the set $s_1^{-1}\cdot A \cup \cdots \cup s_k^{-1}\cdot A$ is (left) thick.
\end{enumerate}
Denote by $\syndetic \semistimes$, $\thick \semistimes$, and $\pws \semistimes$ the collections of all syndetic, thick, and piecewise syndetic subsets of the semigroup $\semistimes$. When the semigroup is apparent or unimportant, we refer to these classes simply as $\syndetic$, $\thick$, and $\pws$.
\end{definition}

We could define the classes of \emph{left syndetic}, \emph{right thick}, and \emph{left piecewise syndetic} sets but choose instead to relegate the analogous results for these ``opposite'' classes to a few interspersed remarks. The choice of \emph{left} and \emph{right} in Definition \ref{def:syndthick} makes thickness ``dual'' to syndeticity in the sense of the following definition.

\begin{definition}
Let $\arbclass \subseteq \subsets S$ be a collection of subsets of a set $S$. The \emph{dual class} $\arbclass^* \subseteq \subsets S$ is the collection of subsets of $S$ having non-empty intersection with every member of $\arbclass$; in other words, $A \in \arbclass^*$ if and only if for all $B \in \arbclass$, $A \cap B \neq \emptyset$.
\end{definition}

It is simple to check that $\syndetic^* = \thick$ and $\thick^* = \syndetic$. Note that if the collection $\arbclass$ is upward closed, then $(\arbclass^*)^* = \arbclass$. This construction allows us to define $\pws^*$, the dual class to the class of piecewise syndetic sets which appears in the statement of Theorem A.

\begin{lemma}\label{lem:basicfactsofduals}
Let $\semistimes$ be a semigroup
\begin{enumerate}[label=(\Roman*)]
\item \label{item:pwsisintersection} $A \in \pws$ if and only if there exist $C \in \syndetic$ and $T \in \thick$ such that $A = C \cap T$.
\item \label{item:combcharofpwsstar} $A \in \pws^*$ if and only if for all $F \in \finitesubsets S$, there exists $C \in \syndetic$ such that $F \tgen C \subseteq A$. ($F \tgen C$ denotes the set $\{ f \tgen c \ | \ f \in F, \ c \in C\}$.)
\item \label{item:psstarintersectedlarge} Let $A \in \pws$, $C \in \syndetic$, and $T \in \thick$.  If $P \in \pws^*$, then $A \cap P \in \pws$, $C \cap P \in \syndetic$, and $T \cap P \in \thick$.
\end{enumerate}
\end{lemma}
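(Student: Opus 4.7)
The plan is to establish (I) first, then derive (II) and (III) as corollaries.

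For (I), the direction $C \cap T \in \pws$ given $C \in \syndetic$ and $T \in \thick$ is the easier half: if $F = \{s_1, \ldots, s_k\}$ witnesses the syndeticity of $C$, then for any finite $E \subseteq S$, applying thickness of $T$ to the finite set $F \cdot E$ produces $x$ with $F \cdot E \cdot x \subseteq T$, and for each $e \in E$ the syndeticity of $C$ yields an index $i(e)$ with $s_{i(e)} \cdot e \cdot x \in C$, hence in $C \cap T$; this shows $\bigcup_i s_i^{-1}(C \cap T)$ is thick. For the reverse direction, given $A \in \pws$ with witness $F = \{s_1, \ldots, s_k\}$ so that $B := \bigcup_i s_i^{-1} A$ is thick, I would set $T := B \cup A$ (thick, containing $A$) and $C := A \cup (S \setminus T)$; then $C \cap T = A$ by set algebra, and $C$ is syndetic because every $x \in S$ either lies in $S \setminus B \subseteq C$, or lies in $B$ and then $s_i \cdot x \in A \subseteq C$ for some $i$.

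For (II), the backward direction is short: given $B \in \pws$ with witness $F$ (so $F^{-1} B := \bigcup_{s \in F} s^{-1}B$ is thick), the hypothesis produces a syndetic $C$ with $F \cdot C \subseteq A$, and since $C$ (syndetic) meets $F^{-1} B$ (thick), some $c \in C$ satisfies $(F \cdot c) \cap B \neq \emptyset$, yielding a point of $A \cap B$. Conversely, fixing finite $F = \{s_1, \ldots, s_k\}$, the candidate $C := \bigcap_i s_i^{-1} A$ satisfies $F \cdot C \subseteq A$ by construction, and the syndeticity of $C$ follows by contradiction: its complement equals $\bigcup_i s_i^{-1}(S \setminus A)$, which would be thick if $C$ were non-syndetic (using the general fact that in any semigroup, non-syndeticity is equivalent to co-thickness), making $S \setminus A$ piecewise syndetic and contradicting $A \in \pws^*$.

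For (III), each conclusion follows swiftly from (I) and (II). For $C \cap P \in \syndetic$: for any thick $T$, $C \cap T \in \pws$ by (I) and hence meets $P$, so $C \cap P$ meets every thick set, and therefore $C \cap P \in \thick^* = \syndetic$. For $T \cap P \in \thick$: given finite $F$, use (II) to get syndetic $C$ with $F \cdot C \subseteq P$; the set $\{x : F \cdot x \subseteq T\}$ is thick (by thickness of $T$), so its intersection with $C$ contains some $x$, which then satisfies $F \cdot x \subseteq T \cap P$. For $A \cap P \in \pws$: decompose $A = C \cap T$ via (I), so $A \cap P = (C \cap P) \cap T$ is a syndetic set intersected with a thick set and hence piecewise syndetic by (I) again.

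The main obstacle is the decomposition half of (I): constructing a syndetic $C$ whose intersection with $T$ recovers $A$ exactly is the only step that requires more than formal manipulation, and one must handle with mild care the possible absence of a multiplicative identity in the semigroup (e.g., by adjoining one or enlarging the witness set for $C$ appropriately). Everything else reduces quickly to the fundamental fact that syndetic sets meet thick sets.
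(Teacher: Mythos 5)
Your proofs of the forward direction of (I), of (II), and of (III) are correct, and unlike the paper (which cites Hindman--Strauss for (I) and Furstenberg for the first assertion of (III)) they are self-contained; the argument for (II) is essentially the paper's duality argument with the quantifiers unwound. There is, however, a genuine gap in the decomposition direction of (I), which you flag but do not actually close.

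With $B = \bigcup_{i=1}^{k} s_i^{-1}A$ thick, $T = B \cup A$, and $C = A \cup (S\setminus T)$, the identity $C \cap T = A$ and the thickness of $T$ hold as you say. For syndeticity of $C$, however, your argument establishes only that $S = C \cup \bigcup_{i=1}^{k} s_i^{-1}C$: every $x \in S$ is either in $S\setminus B \subseteq C$ or has $s_i x \in A \subseteq C$ for some $i$. In a semigroup without an identity the stray clause ``$x \in C$'' cannot be absorbed into a witness set, so this is strictly weaker than syndeticity of $C$. The remedy is the one you gesture at (enlarging the witness set), and it is worth writing down since it is not purely formal: take $W = \{s_1\} \cup \{s_i s_1 : 1 \le i \le k\}$. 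For any $x \in S$, either $s_1 x \notin B$, so that $s_1 x \in S\setminus B \subseteq C$ and $s_1 \in W$ witnesses $x$; or $s_1 x \in B = \bigcup_i s_i^{-1}A$, so that $s_i s_1 x \in A \subseteq C$ for some $i$ and $s_i s_1 \in W$ witnesses $x$. Hence $S = \bigcup_{w\in W} w^{-1}C$ and $C$ is syndetic. (Your alternative suggestion of adjoining an identity to $S$ requires verifying that a subset of $S$ which is syndetic in $S \cup \{1\}$ is syndetic in $S$, and that verification is exactly this same trick, so nothing is saved.)
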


\begin{proof}
For a proof of \ref{item:pwsisintersection}, see \cite[Theorem 4.49]{hindmanstrauss-book}. To prove \ref{item:combcharofpwsstar}, note that $B \in \pws$ if and only if there exists $F \in \finitesubsets S$ and $T \in \thick$ such that for all $s \in T$, $Fs \cap B \neq \emptyset$. It follows that $A \in \pws^*$ if and only if for all $F \in \finitesubsets S$ and all $T \in \thick$, there exists $s \in T$ such that $Fs \subseteq A$. Now \ref{item:combcharofpwsstar} follows since $\thick^* = \syndetic$. A proof of the first assertion in \ref{item:psstarintersectedlarge} can be found in \cite[Lemma 9.4]{furstenberg-book}. The second and third assertions in \ref{item:psstarintersectedlarge} follow from similar set algebra and the characterization of piecewise syndeticity in \ref{item:pwsisintersection}.
\end{proof}

\begin{definition}
A semigroup $\semistimes$ is \emph{left cancellative} if for all $x,y,z \in S$, $x \tgen y = x \tgen z$ implies that $y=z$. \emph{Right cancellativity} is defined analogously. We say $\semistimes$ is \emph{cancellative} if it is both left and right cancellative.
\end{definition}

\begin{definition}\label{def:defofamenable}
A semigroup $\semistimes$ is \emph{left amenable} if the space of bounded, complex-valued functions on $S$ with the supremum norm admits a \emph{left translation invariant mean}, that is, a positive linear functional $\lambda$ of norm 1 which is left translation invariant: for all bounded $f: S \to \mathbb{C}$ and all $s \in S$, $\lambda \big( x \mapsto f(s\cdot x) \big) = \lambda ( f )$.
\end{definition}

The following definition and resulting characterization of upper Banach density appeared for $(\N,+)$ in \cite[Corollary 9.2]{Griesmer} and for general semigroups in \cite[Theorem G]{johnsonrichter}; see also \cite[Lemma 9.6]{furstenbergkatznelsonipszem}.

\begin{definition}\label{def:density}
Let $\semistimes$ be a semigroup and $A \subseteq S$. The \emph{(left) density of $A$} is
\[d_S^*(A) = \sup \big\{ \alpha \geq 0 \ \big| \ \forall F \in \finitesubsets S, \ \exists s \in S, \ \big|(F \tgen s) \cap A\big| \geq \alpha |F| \big\}.\]
When $\semistimes$ is cancellative and left amenable, this density coincides with the upper Banach density (see Lemma \ref{lem:basicfactsofdensity} \ref{item:dstarandbanachcoincide}) and we denote by $\density \semistimes$ the collection of all subsets of $\semistimes$ with positive density. (Throughout this paper, when referring to the class $\density \semistimes$, we shall implicitly assume that $\semistimes$ is cancellative and left amenable.)
\end{definition}

It is important to note that while the density function $\dstar_S$ may be ill-behaved in non-amenable semigroups (for example, the sets $aF_2$ and $bF_2$ have zero density yet form a partition of the free semigroup $F_2 = \langle a, b \rangle$), it still possesses some useful properties in arbitrary cancellative semigroups, as demonstrated in the following lemma.

\begin{lemma}\label{lem:basicfactsofdensity}
Let $\semistimes$ be a cancellative semigroup.
\begin{enumerate}[label=(\Roman*)]
\item \label{item:basicsofthick} $A \in \thick$ if and only if $d_S^*(A) = 1$.
\item\label{item:inversetranslatehasmoredensity} For all $x \in S$, $d_S^*(A) \leq d_S^*(x^{-1} \tgen A)$.
\end{enumerate}
Suppose $\semistimes$ is cancellative and left amenable.
\begin{enumerate}[label=(\Roman*)]
\setcounter{enumi}{2}
\item \label{item:dstarandbanachcoincide} The density $d_S^*$ coincides with the upper Banach density:
\[d_S^*(A) = \sup \big\{ \mu(1_A) \ \big| \ \mu \text{ a left translation invariant mean on } \semistimes \big\}.\]
\item \label{item:unionoftwozerodensity} If $A, B \subseteq S$ are such that $\dstar_S(A) = \dstar_S(B) = 0$, then $\dstar_S(A \cup B) = 0$.
\item \label{item:intersectionofdstar} If $D \in \density$ and $P \in \density^*$, then $D \cap P \in \density$.
\end{enumerate}
\end{lemma}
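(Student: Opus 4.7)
The proof splits naturally into two groups. Parts (I) and (II) are direct consequences of the sup definition of $\dstar_S$ and use only cancellativity, while (IV) and (V) become soft consequences once (III) identifies $\dstar_S$ with the mean-theoretic upper Banach density. Part (III) itself I would import verbatim from the Griesmer and Johnson--Richter references cited just before Definition \ref{def:density}.

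For (I), the forward direction is immediate: thickness provides, for each finite $F$, an $s$ with $F \tgen s \subseteq A$; cancellativity supplies $|F \tgen s| = |F|$, so $\alpha = 1$ is admissible in the sup. For the converse, the set of admissible $\alpha$ is downward closed, so $\dstar_S(A) = 1$ means that for each finite $F$ with $|F| = n$ there exists $s$ witnessing $\alpha = n/(n+1)$; then $|(F \tgen s) \cap A| \geq n^2/(n+1) > n-1$ is an integer bounded above by $|F \tgen s| = n$, forcing $|(F \tgen s) \cap A| = n$ and hence $F \tgen s \subseteq A$. For (II), I change variables $F \mapsto x \tgen F$: given $\alpha < \dstar_S(A)$, the sup definition applied to $x \tgen F$ (cardinality $|F|$ by left cancellativity) yields $s$ with $|((x \tgen F) \tgen s) \cap A| \geq \alpha |F|$, and left multiplication by $x$ restricts to a bijection from $(F \tgen s) \cap (x^{-1} \tgen A)$ onto $((x \tgen F) \tgen s) \cap A$; sending $\alpha \uparrow \dstar_S(A)$ completes the argument.

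With (III) in hand, (IV) becomes a one-line contrapositive: if $\dstar_S(A \cup B) > 0$, then some invariant mean $\mu$ satisfies $\mu(1_A) + \mu(1_B) \geq \mu(1_{A \cup B}) > 0$ by finite subadditivity, forcing at least one of $\dstar_S(A), \dstar_S(B)$ to be positive. For (V), I decompose $D = (D \cap P) \cup (D \setminus P)$; if $\dstar_S(D \cap P) = 0$ were to hold, (IV) would force $\dstar_S(D \setminus P) > 0$, producing a positive-density set disjoint from $P$ and contradicting $P \in \density^*$. The main obstacle is the integrality trick in (I), where one must choose $\alpha$ sufficiently close to $1$ in a manner depending on $|F|$ to upgrade the approximate density statement to the exact containment $F \tgen s \subseteq A$; everything else is a direct unwinding of definitions or a black-box appeal to (III).
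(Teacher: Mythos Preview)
Your proposal is correct and follows essentially the same approach as the paper. The paper is terser---it dismisses (I) as ``easy to verify,'' cites (III) from \cite{BGpaperone}, attributes (IV) to ``sub-additivity of the upper Banach density,'' and derives (V) from partition regularity of $\density$ plus \cite[Lemma 9.4]{furstenberg-book}---but your spelled-out arguments (the integrality trick for (I), the change of variables $F \mapsto x \tgen F$ for (II), and the decomposition $D = (D \cap P) \cup (D \setminus P)$ for (V)) are exactly the content behind those citations.
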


\begin{proof}
\ref{item:basicsofthick} is easy to verify (and does not require cancellativity). To prove \ref{item:inversetranslatehasmoredensity}, let $\al < d_S^*(A)$ and $F \in \finitesubsets S$. There exists an $x \in S$ such that $|s \cdot F \cdot x \cap A| \geq |s \cdot F| \al = |F| \al$. Note that left multiplication by $s$ is a bijection from the set $F\cdot x \cap s^{-1}\cdot A$ to the set $s\cdot F\cdot x \cap A$. Therefore, $|F\cdot x \cap s^{-1}\cdot A| \geq |F| \al$. Since $\al$ was arbitrary, the conclusion follows. \ref{item:dstarandbanachcoincide} follows from \cite[Theorem 3.5]{BGpaperone}, and \ref{item:unionoftwozerodensity} follows from the sub-additivity of the upper Banach density. Finally, \ref{item:intersectionofdstar} follows from \cite[Lemma 9.4]{furstenberg-book} and the fact that $\density$ is partition regular (a consequence of \ref{item:unionoftwozerodensity}).
\end{proof}

The most familiar appearance of the upper Banach density is perhaps for subsets $A \subseteq \N$, where it is given by
\begin{align*}\dstar_{\seminplus}(A) = \limsup_{n-m \to \infty} \frac{|A \cap \{m+1, \ldots, n\}|}{n-m}.\end{align*}
In cancellative, left amenable semigroups, the three definitions of density in Definition \ref{def:density}, in Lemma \ref{lem:basicfactsofdensity} \ref{item:dstarandbanachcoincide}, and as a supremum along all \folner nets all coincide; see \cite[Section 3]{BGpaperone}.

We describe now another notion of largeness, $\ip$ structure, which is of fundamental importance in Ramsey theory and ergodic theory; see \cite{hindmanoriginal}, \cite{fwdynamics}, \cite{furstenbergkatznelsonipszem}, and \cite{bergelsonleibmanannals}.

\begin{definition}\label{def:ipstructure}
Let $\semistimes$ be a semigroup and $A \subseteq S$.
\begin{enumerate}[label=(\Roman*)]
\item $A$ is an \emph{$\ipr$ set}, $r \in \N$, if it contains a set of the form
\begin{align*}\finiteproducts {(s_i)_{i=1}^r} = \big\{ s_{i_1} \tgen s_{i_{2}} \tgen \cdots \tgen s_{i_k} \ \big| \ 1 \leq k \leq r, \ \{i_1 < \cdots < i_k\} \subseteq \{1, \ldots, r\} \big\}\end{align*}
where $(s_i)_{i=1}^r$ is a sequence in $S$.
\item $A$ is an \emph{$\ipnaught$ set} if for all $r \in \N$, it is an $\ipr$ set.
\item $A$ is an \emph{$\ipset$ set} if it contains a set of the form
\begin{align*}\finiteproducts  {(s_i)_{i\in \N}} = \big\{ s_{i_1} \tgen s_{i_{2}} \tgen \cdots \tgen s_{i_k} \ \big| \ k \geq 1, \ \{i_1 < \cdots < i_k\} \subseteq \N \big\},\end{align*}
where $(s_i)_{i \in \N}$ is a sequence in $S$.
\end{enumerate}
We denote the class of $\ipr$, $\ipnaught$, and $\ip$ subsets of $\semistimes$ by $\iprclass \semistimes$, $\ipnaughtclass \semistimes$, and $\ipclass \semistimes$, respectively.
\end{definition}

In this definition, ``FP'' is short for ``finite products''; when the semigroup is written additively, we write ``FS,'' which abbreviates ``finite sums.'' The semigroup $\semistimes$ is not assumed to be commutative, so the order in which the products are taken is important. The increasing order was chosen here so that every (left) thick set is an $\ipset$ set (see Lemma \ref{lem:hierarchylemma} below); decreasing $\ipset$ sets (those defined with a decreasing order) can be found in any right thick set.

The following notion of largeness, centrality, combines the translation invariant notions first introduced and $\ip$ structure. While most of the main results in this paper do not concern the class of central sets directly, we mention it here for completeness. Central sets originated in $\N$ in a dynamical context with Furstenberg \cite[Definition 8.3]{furstenberg-book}. It was revealed in \cite[Section 6]{bhnonmetrizable} that the property of being central is equivalent to membership in a minimal idempotent ultrafilter.

\begin{definition}\label{def:defofcentral}
Let $\semistimes$ be a semigroup. A subset of $S$ is \emph{central} if it is a member of a minimal idempotent ultrafilter on $S$. We denote by $\central \semistimes$ the class of central subsets of $\semistimes$.
\end{definition}

The relationships between the classes of largeness presented thus far will be of critical importance in the following sections. A proof of the following lemma can be found in \cite[Section 1]{BHabundant}. That piecewise syndetic sets have positive density when $\semistimes$ is cancellative and left amenable is an immediate consequence of Lemma \ref{lem:basicfactsofdensity} and the definitions.

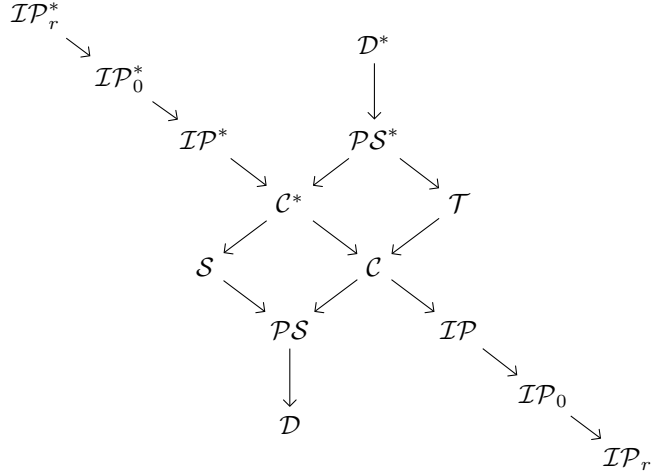
\begin{figure}[h]
\centering
\begin{tikzpicture}[>=triangle 60]
  \matrix[matrix of math nodes,column sep={16pt,between origins},row sep={12pt,between origins}](m)
  {
    |[name=iprstar]|\iprclass^* & & & & & & & & & & & & & & \\
    & & & & & & & & |[name=densitystar]|\density^* & & & & & & \\
    & & |[name=ipnaughtstar]|\ipnaughtclass^* \\
    \\
    & & & & |[name=ipstar]|\ipclass^* & & & & |[name=pwsstar]|\pws^* \\
    \\
    & & & & & & |[name=centralstar]|\central^* & & & & |[name=thick]| \thick \\
    \\
    & & & & |[name=syndetic]|\syndetic & & & & |[name=central]|\central \\
    \\
    & & & & & & |[name=pws]| \pws & & & & |[name=ip]|\ipclass\\
    \\
    & & & & & & & & & & & & |[name=ipnaught]|\ipnaughtclass \\
    & & & & & & |[name=density]|\density \\
    & & & & & & & & & & & & & & |[name=ipr]|\iprclass \\
 };
   \draw[-angle 90] (iprstar) edge (ipnaughtstar)
            (ipnaughtstar) edge (ipstar)
            (ipstar) edge (centralstar)
            (centralstar) edge (central)
            (central) edge (ip)
            (ip) edge (ipnaught)
            (ipnaught) edge (ipr)
            (densitystar) edge (pwsstar)
            (pwsstar) edge (thick)
            (pwsstar) edge (centralstar)
            (centralstar) edge (syndetic)
            (syndetic) edge (pws)
            (thick) edge (central)
            (central) edge (pws)
            (pws) edge (density)
  ;
\end{tikzpicture}
\caption{Containment amongst classes of largeness in a semigroup.}
\label{fig:containmentdiagram}
\end{figure}

\begin{lemma}\label{lem:hierarchylemma}
Let $\semistimes$ be a semigroup and $r \in \N$. The diagram in Figure \ref{fig:containmentdiagram} illustrates containment amongst the classes of largeness in $\semistimes$, with $\mathcal{X} \to \mathcal{Y}$ indicating that $\mathcal{X} \subseteq \mathcal{Y}$. The classes $\density$ and $\density^*$ are only considered in the case that $\semistimes$ is cancellative and left amenable.
\end{lemma}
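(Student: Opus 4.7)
My plan is to partition the diagram into three groups of inclusions: (i) the ``core'' inclusions amongst $\syndetic$, $\thick$, $\pws$, $\central$, $\ipclass$ and their duals, which are classical; (ii) the finite-FP tower through $\ipnaughtclass$ and $\iprclass$; and (iii) the single bottom edge $\pws \subseteq \density$, which is the only step using cancellativity and left amenability.

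For (i), I would invoke \cite[Section 1]{BHabundant}, where these inclusions are proved uniformly through the ultrafilter framework on $\beta S$: thick sets contain members of every minimal left ideal (hence contain a minimal idempotent and are central), syndetic sets belong to every minimal left ideal, and every member of an idempotent ultrafilter contains an FP set (the Galvin--Glazer argument underlying Hindman's theorem). These give $\thick \subseteq \central$ and $\central \subseteq \ipclass$, alongside the trivialities $\syndetic \subseteq \pws$ and $\central \subseteq \pws$. The duality principle $\mathcal{X} \subseteq \mathcal{Y} \Longrightarrow \mathcal{Y}^* \subseteq \mathcal{X}^*$, together with $\thick^* = \syndetic$ and $\syndetic^* = \thick$, then produces $\pws^* \subseteq \thick$, $\pws^* \subseteq \central^*$, $\central^* \subseteq \syndetic$, and $\ipclass^* \subseteq \central^*$. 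The remaining containment $\central^* \subseteq \central$ I would establish by hand: if $A \in \central^*$, then $S \setminus A$ is disjoint from $A$ and hence cannot be central, so $S \setminus A$ lies in no minimal idempotent ultrafilter, which forces $A$ to be in every minimal idempotent ultrafilter and in particular $A \in \central$.

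For (ii), Definition \ref{def:ipstructure} makes $\ipclass \subseteq \ipnaughtclass \subseteq \iprclass$ immediate: an infinite FP sequence contains the FP set of each length-$r$ initial segment, so every $\ipclass$ set is $\ipnaughtclass$ and every $\ipnaughtclass$ set is $\iprclass$. Dualizing yields the top edge $\iprclass^* \subseteq \ipnaughtclass^* \subseteq \ipclass^*$. For (iii), given $A \in \pws$ with $\semistimes$ cancellative and left amenable, I would choose $s_1, \ldots, s_k \in S$ so that $B = \bigcup_{i=1}^k s_i^{-1} \tgen A$ is thick, giving $\dstar_S(B) = 1$ via Lemma \ref{lem:basicfactsofdensity}\ref{item:basicsofthick}. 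The mean characterization in Lemma \ref{lem:basicfactsofdensity}\ref{item:dstarandbanachcoincide} then supplies both finite subadditivity of $\dstar_S$ and its left-translation invariance $\dstar_S(s^{-1} \tgen A) = \dstar_S(A)$; hence $1 \leq \sum_i \dstar_S(s_i^{-1} \tgen A) = k \dstar_S(A)$, so $A \in \density$. Dualizing produces $\density^* \subseteq \pws^*$.

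The only real obstacle is bookkeeping: one must confirm that the left/right handedness conventions fixed in Definition \ref{def:syndthick} align with those underlying the cited ultrafilter arguments in \cite{BHabundant}, and that the ``duals'' $\thick^* = \syndetic$ and $\syndetic^* = \thick$ match this handedness. Once this consistency is verified, every arrow in Figure \ref{fig:containmentdiagram} follows either from the citation, from the definitional observations, or from the duality principle applied to an already-established primal inclusion.
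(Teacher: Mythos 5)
Your proposal is correct and follows essentially the same route as the paper: the core containments are delegated to \cite[Section 1]{BHabundant} (as the paper does), the FP-tower inclusions are immediate from Definition \ref{def:ipstructure}, and the remaining edge $\pws \subseteq \density$ is derived from Lemma \ref{lem:basicfactsofdensity} together with the definitions — in your case by combining the thickness-density equivalence, subadditivity, and left-translation invariance of $\dstar_S$, all of which flow from the mean characterization. The paper's own proof is just a citation plus a one-line remark about $\pws \subseteq \density$; you simply supply more of the intermediate detail.
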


\section{\texorpdfstring{$\Q$}{Q}-algebras and alignment}\label{sec:ringreps}  

Fix $d \in \N$, and denote by $\discrims_d$ the set of all binary operations $\tone: \Zd \times \Zd \to \Zd$ which make the abelian group $(\Zd,+)$ into a (not-necessarily commutative or unital) ring $(\Zd,+,\tone)$ without zero divisors. We call the elements of $\discrims_d$ \emph{proper multiplications on $\Zd$}, omitting the subscript $d$ and the word ``proper'' when it is otherwise clear. All of the countably many multiplications $\tone \in \discrims$ make $(\Zdz,\tone)$ a cancellative semigroup. For each of the classes $\arbclass \in \{\syndetic,\thick,\pws,\ldots\}$ introduced in Section \ref{sec:defs}, we abbreviate $\arbclass(\Zdz,\tone)$ by $\arbclass(\tone)$.

The ring $(\Zd,+,\tone)$, resp. semigroup $(\Zdz,\tone)$, is best understood in the context of the $\Q$-algebra, resp. group, into which it embeds. The multiplication $\tone$ extends uniquely by linearity to a binary operation $\widetilde{\tone}: \Qd \times \Qd \to \Qd$ which makes the vector space $\Qd$ into an associative algebra $(\Qd,+,\widetilde{\tone})$ over $\Q$. A finite dimensional associative algebra with no zero divisors is a division algebra\footnote{Let $(\mathcal{A},+,\cdot)$ be such an algebra, and let $a \in \mathcal{A}$ be non-zero. Since $\mathcal{A}$ has no zero divisors, multiplication on the left and right by $a$ is injective, hence surjective, and so there exists $u \in \Qd$ for which $a \cdot u = a$. Note that $a \cdot u \cdot a = a \cdot a$, whereby $u \cdot a = a$. Given $b \in \mathcal{A}$, write $b = c \cdot a$ to see that $b \cdot u = b$. This shows that $u$ is the multiplicative identity. It follows from surjectivity again that there exists $d \in \mathcal{A}$ for which $a \cdot d = u$. As before, $d \cdot a = u$, so $d$ is the inverse of $a$.} (a unital algebra in which every non-zero element has a two-sided inverse). The cancellative semigroup $(\Zdz,\tone)$ is thus embedded in the group $(\Qdz,\widetilde{\tone})$.

Denote by $\qdiscrims_d$ the collection of all binary operations $\widetilde{\tone}: \Q^d \times \Q^d \to \Q^d$ which make $(\Qd,+,\widetilde{\tone})$ into an associative division algebra over $\Q$. We will consider $\discrims_d$ as a subset of $\qdiscrims_d$ by omitting the tilde notation and automatically extending multiplications on $\Zd$ to ones on $\Qd$.

Representations of these division algebras are an important tool in this paper. For $R$ a commutative ring with identity, let $\matdR$ be the ring of $d$-by-$d$ matrices over $R$ and $\gldR$ be the group of $d$-by-$d$ matrices whose determinant is a unit in $R$. Abusing notation, we will regard $T \in \matdR$ as both a matrix and as an $R$-linear function from $R^d$ to $R^d$. Let $e_i = (0,\ldots,1_R,\ldots,0) \in R^d$ be the $i^{\text{th}}$ standard unit vector, and let $\id$ be the identity matrix.

\begin{lemma}\label{lem:repproperties}
Let $\tone \in \qdiscrims$.  The left (right) representation
\begin{align*}
\rone: (\Qd,+,\tone) &\to (\matdq,+,\cdot) & \roneright: (\Qd,+,\tone) &\to (\matdq,+,\cdot)\\
x &\mapsto (x \tone e_1 \cdots x \tone e_d) & x &\mapsto (e_1 \tone x \cdots e_d \tone x)
\end{align*}
is an injective $\Q$-algebra (anti-)homomorphism satisfying: for all $x, y \in \Qd$,
\[x \tone y = \rone(x)y = \roneright(y)x.\]
Moreover, $\rone: (\Qdz,\tone) \to (\gldq,\cdot)$ is a group homomorphism. If $\tone \in \discrims$, then
\begin{align*} \rone: (\Zd,+,\tone) \to (\matdz,+,\cdot) \text{ and } \rone: (\Zdz,\tone) \to (\matdz \cap \gldq,\cdot) \end{align*}
are ring and semigroup homomorphisms, respectively. The same statements apply with $\rone$ replaced by $\roneright$ and ``anti-'' prepended to ``homomorphism.''
\end{lemma}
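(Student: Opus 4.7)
The plan is to derive all claims from the single bilinear identity
\[x \tone y \;=\; \rone(x)\, y \;=\; \roneright(y)\, x, \qquad x, y \in \Qd,\]
which I would establish first. Expanding $y = \sum_{i=1}^d y_i e_i$ and using the bilinearity of $\tone$ (which is built into the ring/algebra structure) gives $x \tone y = \sum_i y_i (x \tone e_i)$; since the $i^{\text{th}}$ column of $\rone(x)$ is by definition $x \tone e_i$, the right-hand side is exactly $\rone(x) y$. The formula $x \tone y = \roneright(y) x$ follows by the symmetric computation, expanding $x$ in the standard basis on the left.

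From this identity every algebraic claim about $\rone$ reduces to evaluation against arbitrary $y \in \Qd$, that is, to comparing matrices column-by-column. Additivity and $\Q$-linearity are immediate from bilinearity of $\tone$: $\bigl(\rone(x+z) - \rone(x) - \rone(z)\bigr)y = 0$ and $\rone(cx)y - c\rone(x)y = 0$ for every $y$. Multiplicativity uses associativity of $\tone$:
\[\rone(x \tone z)\, y \;=\; (x \tone z) \tone y \;=\; x \tone (z \tone y) \;=\; \rone(x)\,\rone(z)\, y,\]
valid for every $y$, hence $\rone(x \tone z) = \rone(x)\rone(z)$. The analogous calculation for $\roneright$, namely $y \tone (x \tone z) = (y \tone x) \tone z$, reverses the composition order and produces the anti-homomorphism relation $\roneright(x \tone z) = \roneright(z)\roneright(x)$. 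Injectivity of $\rone$ follows because $\rone(x) = 0$ forces $x \tone y = 0$ for all $y$, and the absence of zero divisors then forces $x = 0$.

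To upgrade $\rone$ to a group homomorphism on $(\Qdz, \tone)$, I would invoke the division algebra structure recalled in the footnote preceding the lemma: there is a two-sided identity $u$ with $\rone(u) = \id$ (since $\rone(u) y = u \tone y = y$), and every nonzero $x$ has a two-sided inverse $x^{-1}$, so $\rone(x)\rone(x^{-1}) = \rone(u) = \id$ shows $\rone(x) \in \gldq$. For the integer versions, I would observe only that if $\tone \in \discrims$ then $\Zd$ is closed under $\tone$, so $x \tone e_i \in \Zd$ and $\rone(x) \in \matdz$ whenever $x \in \Zd$, with $\rone(x) \in \matdz \cap \gldq$ whenever $x \in \Zdz$. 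The proof is entirely formal; if there is any subtlety it is only the care needed to distinguish the two representations and to recognize that $\roneright$ emerges as an \emph{anti}-homomorphism precisely because the outer factor in $y \tone (x \tone z) = (y \tone x) \tone z$ is applied second, reversing the order of composition.
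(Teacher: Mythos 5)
The paper omits the proof, noting only that it is standard, and your argument is exactly the standard one: derive the evaluation identity $x \tone y = \rone(x)y = \roneright(y)x$ from bilinearity, then read off additivity, $\Q$-linearity, (anti-)multiplicativity, and injectivity by comparing both sides column-by-column and using associativity and the no-zero-divisors hypothesis, with invertibility coming from the division-algebra structure established in the preceding footnote. Your proof is correct and complete, and the one extra care point you flag — that $\roneright$ reverses composition order because the outer factor in $(y \tone x) \tone z = y \tone (x \tone z)$ is applied last — is precisely the place where a careless write-up would slip, so it is well-placed.
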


The proof of this lemma is standard and is omitted. We will usually refer to the left representation of $(\Qd,+,\tone)$ as simply ``the representation of $\tone$.''  Throughout, we will consistently denote the representations of the multiplications $\tone$ and $\ttwo$ by $\rone$ and $\rtwo$, respectively.

\begin{lemma}\label{lem:reachestheid}
Let $\tone \in \qdiscrims$. For all $x \in \Qdz$, there exists $b \in \N$ such that $b \rone(x)^{-1} \in \ronezdz$. In particular, there exists $c \in \N$ and $w \in \Zdz$ for which $\rone(w) = \roneright(w) = c\idd$.
\end{lemma}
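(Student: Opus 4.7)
The plan is to exploit two facts about the $\Q$-algebra $(\Qd,+,\tone)$: that it is a division algebra (hence unital), and that its left representation $\rone$ is a $\Q$-algebra homomorphism that restricts to a group homomorphism $(\Qdz,\tone) \to (\gldq,\cdot)$. The first fact guarantees the existence of inverses in $\Qdz$; the second tells us that $\rone(x)^{-1} = \rone(x^{-1})$ for $x \in \Qdz$, where $x^{-1}$ is the two-sided inverse in $(\Qd,+,\tone)$.

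For the main statement, given $x \in \Qdz$, I would write $x^{-1} \in \Qdz$ coordinate-wise as a vector in $\Q^d$ and let $b \in \N$ be a common denominator of its coordinates, so that $y \coloneqq bx^{-1} \in \Zdz$. Using $\Q$-linearity of $\rone$ and the multiplicativity established in Lemma \ref{lem:repproperties}, this gives
\[
\rone(y) \;=\; \rone(bx^{-1}) \;=\; b\,\rone(x^{-1}) \;=\; b\,\rone(x)^{-1},
\]
which is exactly the claim $b\rone(x)^{-1} \in \rone(\Zdz)$.

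For the ``in particular'' clause, I would use that the division algebra $(\Qd,+,\tone)$ has a two-sided multiplicative identity $u \in \Qdz$ (this is the content of the footnote in the preceding paragraph of the excerpt). For this $u$, one has $\rone(u) = \roneright(u) = \idd$ directly from the defining identities $u \tone y = y$ and $y \tone u = y$. Clearing denominators, choose $c \in \N$ so that $w \coloneqq cu \in \Zd$; since $u \neq 0$, we have $w \in \Zdz$. Then $\Q$-linearity of both representations yields
\[
\rone(w) \;=\; c\,\rone(u) \;=\; c\idd \quad\text{and}\quad \roneright(w) \;=\; c\,\roneright(u) \;=\; c\idd,
\]
as required.

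There is no substantive obstacle here; the lemma is essentially a bookkeeping statement combining the division-algebra structure of $(\Qd,+,\tone)$ with the (anti-)homomorphism properties from Lemma \ref{lem:repproperties}. The only point worth flagging is that one must appeal to the existence of the identity $u$ in the division algebra — and hence invoke the footnote argument — rather than try to produce $w$ directly in $\Zdz$, since $\Zd$ need not itself contain a multiplicative identity for $\tone$.
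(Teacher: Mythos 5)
Your proof is correct and follows essentially the same route as the paper: invertibility in the enveloping division algebra gives a two-sided inverse $x^{-1} \in \Qdz$ with $\rone(x^{-1}) = \rone(x)^{-1}$, and clearing denominators lands in $\ronezdz$. For the ``in particular'' clause, the paper's $w = c(z \tone x)$ is exactly $cu$ for the multiplicative identity $u$ (since $\rone(z \tone x) = \idd$ and $\rone$ is injective), so your choice is the same element; the only cosmetic difference is that you verify $\roneright(w) = c\idd$ directly from $y \tone u = y$, whereas the paper reaches the same conclusion via the anti-homomorphism identity $\roneright(e_1)\roneright(w) = \roneright(\rone(w)e_1)$ followed by cancellation of $\roneright(e_1)$. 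Your version is a touch more transparent but mathematically equivalent.
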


\begin{proof}
Since $(\Qd,+,\tone)$ is a division algebra, there exists a $z \in \Qdz$ such that $\rone(x)^{-1} = \rone(z)$. Now $b \in \N$ can be taken to be the least common multiple of the denominators of the coordinates of $z$.

If $c \in \N$ is such that $w = c (z \tone x) \in \Zdz$, then $\rone(w) = c \rone(z) \rone(x) = c \idd$. To see that $\roneright(w) = c\idd$, note that
\[\roneright(e_1) \roneright(w) = \roneright \big( \rone(w) e_1 \big) = \roneright(c e_1) = c\roneright(e_1).\]
Since $\roneright(e_1)$ is invertible, we see $\roneright(w) = c\idd$.
\end{proof}

We now explain the condition (\ref{eqn:mainconditiontheorema}) appearing in Theorem A in the introduction in terms of the images of the representations of the algebras involved. For $U \subseteq \matdq$, the \emph{centralizer of $U$ in $\matdq$} is the set of matrices in $\matdq$ which commute with all matrices in $U$. The following lemma connects the centralizer of the image of the left representation of $\tone$ with the image of its right representation (cf. \cite[Lemma 2.45]{knappbook}).

\begin{lemma}\label{lem:centralizerdescription}
Let $\tone \in \qdiscrims$. The centralizer of $\roneqd$ in $\matdq$ is $\ronerightqd$.
\end{lemma}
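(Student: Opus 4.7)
The plan is to prove the two inclusions separately, with the easy direction coming from associativity and the harder direction exploiting the unit of the division algebra $(\Q^d,+,\tone)$.

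First I would verify that $\rone^{\text{right}}(\Q^d)$ sits inside the centralizer. For any $x,y,z\in\Q^d$, associativity of $\tone$ gives
\[\rone(x)\roneright(y)z = x\tone(z\tone y) = (x\tone z)\tone y = \roneright(y)\rone(x)z,\]
so $\rone(x)\roneright(y)=\roneright(y)\rone(x)$ for all $x,y$. Hence every element of $\roneright(\Q^d)$ commutes with every element of $\rone(\Q^d)$.

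For the reverse inclusion, I would first recall (as established in the paper's footnote) that the finite-dimensional associative algebra $(\Q^d,+,\tone)$ without zero divisors is a division algebra, so it has a two-sided multiplicative identity, say $u\in\Qdz$. By the defining identity $x\tone y = \rone(x)y$ from Lemma \ref{lem:repproperties}, one has $\rone(u)z = u\tone z = z$ for every $z$, i.e., $\rone(u)=\idd$, and moreover $\rone(x)u = x\tone u = x$ for every $x\in\Q^d$. Now let $M\in M_d(\Q)$ lie in the centralizer of $\rone(\Q^d)$, and set $y := Mu\in\Q^d$. Then for any $x\in\Q^d$,
\[Mx = M\rone(x)u = \rone(x)Mu = \rone(x)y = x\tone y = \roneright(y)x,\]
where the second equality uses that $M$ commutes with $\rone(x)$. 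Thus $M = \roneright(y)\in\roneright(\Q^d)$, completing the proof.

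The calculation is essentially a three-line manipulation; the only non-routine ingredient is the recognition that $(\Q^d,+,\tone)$ is unital, which is precisely where the no-zero-divisor hypothesis (upgraded to division algebra status in the footnote) gets used. Without a unit, one cannot ``evaluate'' the centralizing matrix $M$ at a distinguished vector to recover the element $y$ representing it, so this is the one step to be careful about; everything else is just associativity of $\tone$.
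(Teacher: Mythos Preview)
Your proof is correct, but it takes a genuinely different route from the paper's. The paper argues by dimension count: it observes that $\matdq$ is a $d^2$-dimensional central simple $\Q$-algebra and $\roneqd$ a $d$-dimensional simple subalgebra, then invokes the Double Centralizer Theorem \cite[Theorem 2.43]{knappbook} to conclude that the centralizer has dimension exactly $d$; since $\ronerightqd$ is a $d$-dimensional subspace already known (by associativity) to sit inside the centralizer, the two must coincide. Your argument, by contrast, is completely elementary: you use the unit $u$ of the division algebra to write any $x$ as $\rone(x)u$, and then the centralizing property of $M$ lets you pull $M$ past $\rone(x)$ to identify $M$ with $\roneright(Mu)$. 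Your approach is self-contained and avoids the structural machinery of central simple algebras; the paper's approach is terser once the Double Centralizer Theorem is in hand and makes the underlying reason (equality of dimensions) more transparent. Both rely on the same easy inclusion via associativity, and both ultimately rest on the division-algebra structure---you use it to get the unit, the paper uses it to get simplicity.
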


\begin{proof}
$\matdq$ is a $d^2$-dimensional, central simple algebra over $\Q$, and $\roneqd$ is a $d$-dimensional, simple subalgebra of $\matdq$. By the Double Centralizer Theorem \cite[Theorem 2.43]{knappbook}, the centralizer of $\roneqd$ is a $d$-dimensional, simple subalgebra of $\matdq$. By associativity, multiplication on the left commutes with multiplication on the right, so the centralizer contains $\ronerightqd$, a $d$-dimensional subalgebra of $\matdq$. Because the dimensions coincide, the lemma follows.
\end{proof}

The first of the equivalent conditions in the following lemma is condition (\ref{eqn:mainconditiontheorema}) in Theorem A from the introduction. By a \emph{$d$-dimensional lattice in $\matdq$}, we mean a free subgroup of $(\matdq,+)$ with $d$ generators.

\begin{lemma}\label{lem:equivconditionsforaligned}
Let $\tone, \ttwo \in \qdiscrims$, and let $\rone, \rtwo$ be their representations. The following are equivalent:
\begin{enumerate}[label=(\Roman*)]
\item\label{item:ltone} there exist $v, w \in \Zdz$ such that for all $x,y \in \Zd$,
\[x \tone v \tone y = x \ttwo w \ttwo y;\]
\item\label{item:lttwo} $\ronezd \cap \rtwozd$ is a $d$-dimensional lattice in $\matdq$;
\item\label{item:ltthree} $\roneqd = \rtwoqd$;
\item\label{item:ltfour} there exists $v \in \Qd$ for which $\rtwo = \rone \circ \roneright(v)$ (where we abuse notation by regarding the matrix $\roneright(v)$ as a function $\Q^d \to \Q^d$).
\end{enumerate}
\end{lemma}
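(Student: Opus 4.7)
My plan is to prove the equivalences in the pattern (IV) $\iff$ (III), (II) $\iff$ (III), and (IV) $\iff$ (I). The critical technical input is the centralizer identification in Lemma \ref{lem:centralizerdescription}, which translates equality of the images of the left representations into equality of the images of the right representations, and the latter makes the ``right $\tone$-multiplication by a fixed element'' form of (IV) naturally appear. Throughout, let $u_1, u_2 \in \Qdz$ denote the multiplicative identities of the division algebras $(\Qd,+,\tone)$ and $(\Qd,+,\ttwo)$, respectively.

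For (IV) $\Rightarrow$ (III): since $\tone$-multiplication by $v \in \Qdz$ is a bijection on $\Qd$, $\rtwoqd = \rone(\Qd \tone v) = \roneqd$. For (III) $\Rightarrow$ (IV): Lemma \ref{lem:centralizerdescription} upgrades $\roneqd = \rtwoqd$ to the equality of centralizers $\ronerightqd = \rtworightqd$, so for each $y \in \Qd$ there is a (necessarily $\Q$-linear) $h(y) \in \Qd$ with $\rtworight(y) = \roneright(h(y))$. Applying both matrices to $u_2$ gives $y = u_2 \tone h(y)$, so $h(y) = v \tone y$ where $v \in \Qdz$ is the unique element satisfying $u_2 \tone v = u_1$. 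Hence $x \ttwo y = x \tone v \tone y$ for all $x, y \in \Qd$, and reading this through the left representation yields $\rtwo(x) = \rone(x)\rone(v) = \bigl(\rone \circ \roneright(v)\bigr)(x)$, which is (IV).

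For (IV) $\Rightarrow$ (I): rewrite (IV) as the bilinear identity $x \ttwo y = x \tone v \tone y$ on $\Qd \times \Qd$. Lemma \ref{lem:reachestheid} furnishes $c \in \N$ and $w \in \Zdz$ with $w \ttwo y = cy$ for all $y \in \Qd$; choosing $n \in \N$ that is both a common denominator for the coordinates of $v$ and a multiple of $c$, the elements $v^* := nv$ and $w^* := (n/c)w$ lie in $\Zdz$ and satisfy $x \tone v^* \tone y = n(x \ttwo y) = x \ttwo w^* \ttwo y$ for all $x, y \in \Zd$, giving (I). For (I) $\Rightarrow$ (IV): extending (I) $\Q$-bilinearly, $\rone(x)\rone(v) = \rtwo(x)\rtwo(w)$ for all $x \in \Qd$, so $\rtwo(x) = \rone(x) A$ where $A := \rone(v)\rtwo(w)^{-1}$ is invertible. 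Applying $\rtwo(x \ttwo y) = \rtwo(x)\rtwo(y)$ and cancelling $A$ on the right yields $\rone(x \ttwo y) = \rone(x) A \rone(y)$; specializing $x = u_1$ gives $A \rone(y) \in \roneqd$ for every $y$, and then specializing $y = u_1$ forces $A \in \roneqd$. Thus $A = \rone(v')$ for some $v' \in \Qd$, and $\rtwo(x) = \rone(x \tone v')$.

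The (II) $\iff$ (III) equivalence is a standard full-rank lattice argument in the $d$-dimensional $\Q$-vector space $\matdq$: if $\ronezd \cap \rtwozd$ has rank $d$, its $\Q$-span is a $d$-dimensional subspace contained in both $\roneqd$ and $\rtwoqd$, forcing the three to coincide; conversely, when $\roneqd = \rtwoqd$, expanding each $\rtwo(e_i)$ in a $\Z$-basis of $\ronezd$ (with rational coefficients) and clearing denominators produces $n \in \N$ with $n\rtwozd \subseteq \ronezd$, whereupon $n\rtwozd \subseteq \ronezd \cap \rtwozd$ is already of rank $d$. I expect the main obstacle to be (III) $\Rightarrow$ (IV): without the centralizer identification, it is not apparent that the abstract $\Q$-algebra isomorphism $\rone^{-1} \circ \rtwo$ furnished by (III) must collapse to the very restrictive form of right $\tone$-multiplication by a single element of $\Qd$.
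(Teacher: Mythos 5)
Your proof is correct. The paper closes the cycle as (I) $\Rightarrow$ (II) $\Rightarrow$ (III) $\Rightarrow$ (IV) $\Rightarrow$ (I), whereas you prove (II) $\iff$ (III), (III) $\iff$ (IV), and (I) $\iff$ (IV) as three separate biconditionals. Both rely on Lemma~\ref{lem:centralizerdescription}, but you apply it differently at the crucial implication (III) $\Rightarrow$ (IV): the paper introduces $T = \rone^{-1} \circ \rtwo$ (transported along a common basis), manipulates $\rone(T\rone(Tx)y) = \rone(\rone(Tx)Ty)$ to deduce $T$ lies in the centralizer of $\roneqd$, and then invokes the lemma to write $T = \roneright(v)$. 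You instead use the lemma to upgrade $\roneqd = \rtwoqd$ to $\ronerightqd = \rtworightqd$, and then, specializing to the $\ttwo$-identity $u_2$, explicitly identify $v$ as $u_2^{-1}$, the $\tone$-inverse of $u_2$. This is a more conceptual derivation that pins down $v$ concretely, though it does rely on the (true, but unmentioned in the lemma statement) fact that both algebras are unital division algebras. You also supply a direct (I) $\Rightarrow$ (IV) argument that is not in the paper (the paper gets this direction through (II) and (III)): after rewriting (I) as $\rtwo(x) = \rone(x)A$ with $A = \rone(v)\rtwo(w)^{-1}$, you specialize to the identity elements to force $A \in \roneqd$. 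This is a nice alternative that sidesteps the lattice intersection argument entirely, at the mild cost of again appealing to the unital structure.
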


\begin{proof}
Suppose \ref{item:ltone} holds, and choose $c \in \N$ such that $c\roneright(v), c\rtworight(w) \in \matdz$. Writing \ref{item:ltone} in terms of representations, for all $x \in \Zd$, $\rone(c\roneright(v)x) = \rtwo(c\rtworight(w)x)$. Since $\roneright(v)$ and $\rtworight(w)$ are invertible, we have that $\rone(c\roneright(v)\Zd) = \rtwo(c\rtworight(w)\Zd)$ is a $d$-dimensional lattice contained in $\ronezd \cap \rtwozd$, a lattice of dimension at most $d$.

To see that \ref{item:lttwo} implies \ref{item:ltthree}, suppose $\ronezd \cap \rtwozd = \zspan ( \{ \ell_1, \ldots, \ell_d \} )$. It follows that both $\roneqd$ and $\rtwoqd$ contain $\qspan ( \{ \ell_1, \ldots, \ell_d \} )$, a $d$-dimensional linear subspace of $\matdq$. Since $\roneqd$ and $\rtwoqd$ are $d$-dimensional linear subspaces of $\matdq$, they coincide.

To see that \ref{item:ltthree} implies \ref{item:ltfour}, suppose $\roneqd = \rtwoqd = \qspan ( \{ \ell_1, \ldots, \ell_d \} )$.  Let $T$ be the transformation taking $\rtwo^{-1}(\ell_i)$ to $\rone^{-1}(\ell_i)$, and note that $T \in \gldq$ and $\rtwo = \rone \circ T$. Since $\rone$ and $\rtwo$ are ring homomorphisms, for all $x, y \in \Qd$,
\[\rone( T \rone( Tx) y) = \rtwo(\rtwo(x)y)= \rtwo(x)\rtwo(y) = \rone(Tx) \rone(Ty) = \rone( \rone(Tx) Ty).\]
Since $\rone$ is injective and the previous equality holds for all $y \in \Qd$, we conclude that for all $x \in \Qd$,
\[T \rone( Tx) = \rone(Tx) T.\]
Since $T \in \gldq$ and the previous equality holds for all $x \in \Qd$, we conclude that $T$ is in the centralizer of $\roneqd$ in $\matdq$. By Lemma \ref{lem:centralizerdescription}, there exists $v \in \Qd$ for which $T = \roneright(v)$. Therefore, \ref{item:ltthree} implies \ref{item:ltfour}.

Assuming \ref{item:ltfour}, for all $x \in \Qd$, $\rtwo (x) = \rone (\roneright(v)x)$. By Lemma \ref{lem:reachestheid}, there exists $w \in \Qdz$ such that $\rtworight(w) = \idd$. Choose $c \in \N$ so that $cv, cw \in \Zdz$. It follows that for all $x, y \in \Qd$, 
\[x \tone (cv) \tone y = \rone(\roneright(cv)x)y = c\rtwo(x)y = \rtwo(\rtworight(cw)x)y = x \ttwo (cw) \ttwo y.\]
This yields \ref{item:ltone}, finishing the proof.
\end{proof}

\begin{definition}\label{def:aligned}
Two multiplications $\tone, \ttwo \in \qdiscrims$ are
\begin{enumerate}[label=(\Roman*)]
\item \label{item:aligneddef}\emph{aligned} if any (all) of the equivalent conditions in Lemma \ref{lem:equivconditionsforaligned} hold; 
\item \emph{isomorphic} if $(\Qd,+,\tone)$ and $(\Qd,+,\ttwo)$ are isomorphic $\Q$-algebras;
\item \emph{opposite} if $\ttwo = \toneop$, where $\toneop$ is defined by $x \toneop y = y \tone x$.
\end{enumerate}
\end{definition}

The word ``aligned'' reflects the fact that the images of the representations coincide. The previous lemma makes it clear that ``aligned'' is an equivalence relation on $\qdiscrims$ and hence on $\discrims$. Note that $\tone, \ttwo \in \discrims$ being isomorphic does \emph{not} mean that the rings $(\Zd,+,\tone)$, $(\Zd,+,\ttwo)$ or the semigroups $(\Zdz,\tone)$, $(\Zdz,\ttwo)$ are isomorphic. For example, though $\Z \big[(1+\sqrt 5)/2 \big]$ and $\Z \big[\sqrt 5 \big]$ are non-isomorphic rings, the multiplications they induce on $\Z^2$ are isomorphic according to Definition \ref{def:aligned} because the enveloping $\Q$-algebras are both isomorphic to $\Q(\sqrt 5)$.

The equivalence relations ``aligned,'' ``isomorphic,'' and ``opposite'' are intimately related with certain subspaces of $\gldq$. This relationship will be important in the proof of Corollary C in Section \ref{sec:proofofcorollary}. For $\tone, \ttwo \in \qdiscrims$, let
\begin{align*}
\centshort(\tone) &= \{T \in \gldq \ | \ \forall x \in \Qdz, \ \rone(x) T = T\rone(x) \},\\
\normshort(\tone) &= \{T \in \gldq \ | \ T^{-1} \roneqdz T = \roneqdz \},\\
\isom(\tone,\ttwo) &= \{T \in \gldq \ | \ T:(\Qd,+,\tone) \to (\Qd,+,\ttwo) \text{ isomorphism} \},\\
\aut(\tone) &= \isom(\tone,\tone),
\end{align*}
where ``$\centshort$,'' ``$\normshort$,'' and ``$\isom$,'' are short for ``centralizer,'' ``normalizer,'' and ``unital $\Q$-algebra isomorphism.'' The following $\gldq$-action on $\qdiscrims$ serves to connect the relations in Definition \ref{def:aligned} with these spaces. Note that if $\tone$ and $\toneop$ are not isomorphic, then $\isom(\tone,\toneop)$ is empty.

\begin{lemma}\label{lem:gldqactionproperties}
The map
\[\begin{aligned} \gldq \times \qdiscrims &\to \qdiscrims \\ (T,\tone) &\mapsto \tone_T \end{aligned} \qquad \text{defined by} \qquad x \tone_T y := T^{-1} \big( T x \tone T y \big)\]
yields a right $\gldq$-action on $\qdiscrims$. Moreover, for $\tone, \ttwo \in \qdiscrims$,
\[\tone, \ \ttwo \text{ are  } \begin{cases} \text{isomorphic} \\ \text{aligned} \\ \text{equal} \\ \text{opposite} \end{cases} \! \! \! \! \! \! \text{  iff there exists } T \in \begin{cases} \gldq \\ \normshort(\tone) \\ \aut(\tone) \\ \isom(\tone,\toneop) \end{cases} \! \! \! \! \! \! \text{ s.t. } \ttwo = \tone_T.\]
\end{lemma}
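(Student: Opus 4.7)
The plan is to proceed in three phases: first, verify that $\tone_T$ is a valid multiplication in $\qdiscrims$ and that the formula defines a right $\gldq$-action; second, reduce three of the four equivalences to a single clean observation; third, tackle the aligned case, which is the main technical step.

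For the first phase, I would check bilinearity (immediate from the linearity of $T$ and $T^{-1}$), associativity (the middle $TT^{-1}$ collapses, reducing to associativity of $\tone$), and absence of zero divisors (preserved since $T \in \gldq$). The action axioms $\tone_{\idd} = \tone$ and $(\tone_T)_S = \tone_{TS}$ then reduce to $(TS)^{-1} = S^{-1} T^{-1}$.

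For the second phase, the key observation is that $\ttwo = \tone_T$ is equivalent to the identity $T(x \ttwo y) = Tx \tone Ty$, i.e., that $T$ is a $\Q$-algebra isomorphism $(\Qd,+,\ttwo) \to (\Qd,+,\tone)$. This immediately yields the isomorphic equivalence (any $T \in \gldq$ with $\ttwo = \tone_T$ witnesses the isomorphism, and conversely) and the equal equivalence ($\tone = \tone_T$ iff $T \in \aut(\tone)$, with $T = \idd$ handling the forward direction). For the opposite case, substituting $\ttwo = \toneop$ and using $x \toneop y = y \tone x$, the key identity becomes $Tx \tone Ty = T(y \tone x)$; swapping $x$ and $y$ gives $T(x \tone y) = Ty \tone Tx$, which is precisely the anti-isomorphism condition characterizing membership in $\isom(\tone, \toneop)$.

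The third phase handles the aligned case. I would first compute the representation of $\tone_T$: the chain $\rtwo(x) y = x \tone_T y = T^{-1} \rone(Tx) T y$ gives $\rtwo(x) = T^{-1} \rone(Tx) T$ and hence $\rtwoqd = T^{-1} \roneqd T$. Combined with Lemma \ref{lem:equivconditionsforaligned}(III), this shows that $\tone$ and $\tone_T$ are aligned iff $T^{-1} \roneqd T = \roneqd$, equivalently $T \in \normshort(\tone)$ (since conjugation fixes $0$). The main obstacle is the forward direction: starting from the abstract aligned hypothesis, produce a concrete $T \in \normshort(\tone)$ witnessing $\ttwo = \tone_T$. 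This is resolved via Lemma \ref{lem:equivconditionsforaligned}(IV): picking $v \in \Qdz$ with $\rtwo = \rone \circ \roneright(v)$ and taking $T = \roneright(v)$, one sees that $T \in \centshort(\tone) \subseteq \normshort(\tone)$ (because left and right $\tone$-multiplications commute as operators on $\Qd$, by associativity of $\tone$), and then $T^{-1} \rone(Tx) T = \rone(Tx) = \rtwo(x)$, giving $\ttwo = \tone_T$. The crux is recognizing that $\roneright(v)$ automatically centralizes $\roneqd$, which trivializes the conjugation and avoids any search over $\gldq$.
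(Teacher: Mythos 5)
Your proposal is correct and follows essentially the same route as the paper's proof: verify the action axioms, observe that $\ttwo = \tone_T$ is precisely the statement that $T$ is a $\Q$-algebra isomorphism $(\Qd,+,\ttwo) \to (\Qd,+,\tone)$ (which handles the isomorphic, equal, and opposite cases), and handle the aligned case via the representation identity $\rone_T(x) = T^{-1}\rone(Tx)T$ together with Lemma~\ref{lem:equivconditionsforaligned}(IV) and the fact that $\roneright(v)$ centralizes $\roneqd$. The only cosmetic difference is in the opposite case, where you substitute directly into the key identity whereas the paper invokes the commutation of $\cdot_{\text{op}}$ with the $\gldq$-action; these are the same computation.
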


\begin{proof}
It is routine to check that this indeed defines a right $\gldq$-action on the set $\qdiscrims$. Let $T \in \gldq$ and $\tone \in \qdiscrims$. One can easily check that
\[T: (\Qd,+,\tone_T) \longrightarrow (\Qd,+,\tone)\]
is a $\Q$-algebra isomorphism, meaning $\tone$ and $\tone_T$ are isomorphic. Conversely, suppose $\ttwo \in \qdiscrims$ is isomorphic to $\tone$ via $T \in \isom(\ttwo,\tone)$. Since $T(x \ttwo y) = T(x) \tone T(y)$, $\ttwo = \tone_{T}$.

Note that for any $T \in \gldq$, the representation $\rone_{T}$ of $\tone_T$ is defined on $\Qd$ by
\begin{align}\label{eqn:repofactedonmult}\rone_{T} (x) = T^{-1}\rone(Tx) T.\end{align}
If $\tone$ and $\ttwo$ are aligned, by the proof of Lemma \ref{lem:equivconditionsforaligned}, there exists $T$ in the centralizer of $\roneqdz$ in $\gldq$ (and so, in particular, in $\normshort(\tone)$) such that $\rtwo = \rone \circ T$. Since $T$ commutes with matrices in $\roneqdz$, it follows from (\ref{eqn:repofactedonmult}) that $\rone_T = \rone \circ T = \rtwo$, meaning $\ttwo = \tone_T$. Conversely, if $T \in \normshort(\tone)$, then we see from (\ref{eqn:repofactedonmult}) that $\rone_{T}(\Qd) = \rone(\Qd)$, meaning $\tone_T$ and $\tone$ are aligned.

It follows from the first paragraph of this proof that if $\tone_T = \tone$, then $T$ is an automorphism of $(\Qd,+,\tone)$.

Finally, note that $\cdot_{\text{op}}: \qdiscrims \to \qdiscrims$ commutes with the $\gldq$-action on $\qdiscrims$. If $\tone$ and $\tone_T$ are opposite,  then $(\toneop)_T = \tone$, and the first paragraph of this proof gives that $T$ is an isomorphism from $(\Qd,+,\tone)$ to $(\Qd,+,\toneop)$.
\end{proof}

We give now a brief description of the subspaces of $\gldq$ appearing here; concrete examples are given in the following section. It follows from Lemma \ref{lem:centralizerdescription} that $\centshort(\tone) = \roneright(\Qdz)$, and by standard group theory, this is a normal subgroup of $\normshort(\tone)$. Moreover,
\begin{align}\label{eqn:normisautcent}\normshort(\tone) = \aut(\tone) \centshort(\tone).\end{align}
To see this, let $T \in \normshort(\tone)$. Since $\tone$ and $\tone_T$ are aligned, we see from the proof of Lemma \ref{lem:gldqactionproperties} that there exists $S \in \centshort(\tone)$ such that $\tone_T = \tone_S$. It follows from the lemma that $TS^{-1} \in \aut(\tone)$, meaning $\normshort(\tone) \subseteq \aut(\tone) \centshort(\tone)$. The reverse inclusion is simple to verify, yielding (\ref{eqn:normisautcent}).

Suppose $\isom(\tone,\toneop)$ is non-empty, and let $T \in \isom(\tone,\toneop)$. Since $T \isom(\tone,\toneop) \subseteq \aut(\tone)$ and $T^2 \in \aut(\tone)$, $\isom(\tone,\toneop) \subseteq T \aut(\tone)$. The reverse inclusion is immediate, meaning that $\isom(\tone,\toneop) = T \aut(\tone)$.

We end this section with a remark on the amenability of the semigroups $(\Zdz,\tone)$.

\begin{lemma}\label{lem:nonamenable}
Let $\tone \in \discrims$. The semigroup $(\Zdz,\tone)$ is left amenable if and only if $(\Zdz,\tone)$ is commutative.
\end{lemma}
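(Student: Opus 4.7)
The forward direction is classical: if $(\Zdz,\tone)$ is commutative, it is abelian, hence left amenable, by (for instance) Markov--Kakutani applied to the weak-$*$ compact convex set of means on $\ell^\infty(\Zdz)$.

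For the converse, assume $\tone$ is non-commutative; the goal is to show $(\Zdz,\tone)$ is not left amenable. My first step is to pass to the enveloping multiplicative group $G = (\Qdz,\tone)$. Every $q \in \Qdz$ can be written $q = u \tone v^{-1}$ with $u, v \in \Zdz$: letting $e \in \Qd$ be the multiplicative identity of the $\Q$-algebra $(\Qd,+,\tone)$ and choosing $n \in \N$ large enough that $ne \in \Zdz$ and $nq \in \Zdz$, one has $q = (nq) \tone (ne)^{-1}$ since $ne$ acts as scalar multiplication by $n$. Thus $(\Zdz,\tone)$ is an Ore semigroup with group of right fractions $G$, and by the theorem of Frey (see also Wilde--Witz) a cancellative Ore semigroup is left amenable if and only if its enveloping group of fractions is amenable. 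It therefore suffices to show $G$ is non-amenable.

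To show $G$ is non-amenable, I will produce a free subgroup of rank two inside $G$. Via the left representation $\rone$, $G$ embeds into $GL_d(\Q)$, so every finitely generated subgroup of $G$ is subject to Tits's alternative: it is either virtually solvable or contains $F_2$. Consequently, it is enough to exhibit two elements $a, b \in G$ whose generated subgroup is not virtually solvable. The non-commutativity of $\tone$ enters here through a Hua/Herstein-type theorem: the multiplicative group of a non-commutative finite-dimensional $\Q$-division algebra cannot be (virtually) solvable, and more precisely one can find non-commuting $a, b \in G$ whose generated subgroup is not virtually solvable. This group-theoretic input is the main obstacle of the lemma and is presumably the content of the acknowledgement to Grigorchuk and Sapir.

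Once a copy of $F_2$ is located inside $G$, $G$ fails to be amenable, and by the first reduction neither does $(\Zdz,\tone)$, completing the proof. The only genuinely delicate step is the assertion that the multiplicative group of a non-commutative finite-dimensional $\Q$-division algebra has a finitely generated subgroup that is not virtually solvable; this is where I expect to have to unwind the structural theory of division algebras (or invoke a result of Lichtman/Gon\c{c}alves on free subgroups in such multiplicative groups directly) rather than rely on soft arguments.
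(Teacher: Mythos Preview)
Your proposal is correct and follows essentially the same route as the paper: reduce amenability of the semigroup to amenability of its group of (two-sided) quotients $G=(\Qdz,\tone)$ via the Ore/Frey--Wilde--Witz argument (the paper cites Grigorchuk--Stepin for this), and then show $G$ contains a non-abelian free subgroup. The only difference is that the paper skips the Tits-alternative discussion entirely and invokes Gon\c{c}alves's result (\cite[Lemma 2.0]{goncalves}) directly to obtain the free subgroup in the multiplicative group of a non-commutative finite-dimensional division algebra over $\Q$---which is exactly the reference you suspected you would need at the end.
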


\begin{proof}
It is a classical fact that a semigroup which is commutative is left amenable; see, for example, \cite[Theorem 4]{awstrongfolner}.

To handle the other direction, note that by Lemma \ref{lem:reachestheid}, the group $G = (\Qdz,\tone)$ is a group of right (and left) quotients of the semigroup $S=(\Zdz,\tone)$; this means that when $S$ is identified as a subset of $G$ in the obvious way, $G = S \tone S^{-1} = S^{-1}\tone S$. It follows by \cite[Corollary 2]{grigorstepin} that the semigroup $S$ is left amenable if and only if the group $G$ is amenable.

Suppose $(\Zdz,\tone)$ is not commutative. Because $G$ is the multiplicative group of a finite dimensional, non-commutative division algebra, it follows from \cite[Lemma 2.0]{goncalves} that $G$ contains a non-cyclic free subgroup. It is classical fact that a group containing a non-cyclic free subgroup is non-amenable. Hence, $S$ is not left (or right) amenable.
\end{proof}

\section{Examples of proper multiplications on \texorpdfstring{$\Zd$}{Zd}}\label{sec:earlyexamples} 

In this section, we describe the sets of proper multiplications on $\Z$ and $\Zt$ and discuss an example of a non-commutative proper multiplication on $\Z^4$. We also give concrete descriptions of the subspaces of $\gldq$ appearing in the previous section.

For non-zero $n \in \Z$, let $\tthree n$ be the multiplication on $\Z$ defined by $x \tthree n y = nxy$. Thus, $\tthree 1$ is the usual multiplication on $\Z$. It is an easy exercise to show that 
\[\discrims_1 = \big\{ \tthree n \ \big| \ n \in \Zz\big\}.\]
Moreover, for all non-zero $n, m, x, y \in \Z$,
\[x \tthree n m^2 \tthree n y = x \tthree m n^2 \tthree m y = (nm)^2 x y,\]
so each pair of multiplications in $\discrims_1$ is aligned. It follows by Theorem A (proven in the next section) that there is only a single notion of ``multiplicatively \psstar{}'' in $\Zz$. Each multiplication in $\discrims_1$ extends to a multiplication in $\qdiscrims_1$, and it is easy to see that
\[\qdiscrims_1 = \big\{ \tthree r \ \big| \ r \in \Qz\big\},\]
where $x \tthree r y = rxy$. All multiplications in $\qdiscrims_1$ are aligned, hence isomorphic.

A natural way to create proper multiplications on $\Zd$ when $d \geq 2$ is to consider the ring $\Z[x] \big / \big(p(x) \big)$ when $p(x) \in \Z[x]$ is a monic, irreducible polynomial of degree $d$. Denote by $\tthree {p(x)}$ the multiplication on $\Zd$ gotten by associating the class of $\sum_{i=0}^{d-1} a_i x^i$ in $\Z[x] \big / \big(p(x) \big)$ with the integer vector $(a_i)_{i=0}^{d-1} \in \Zd$. Note that $(\Zd,+,\tthree {p(x)})$ is a ring without zero divisors and with multiplicative identity $e_1$.

Let $\rthree {p(x)}$ be the representation of $\tthree {p(x)} \in \qdiscrims_d$. Since $\tthree {p(x)}$ is commutative, $\centshort(\tthree {p(x)}) = \rthree {p(x)}(\Qdz)$, $(\tthree {p(x)})_{\text{op}}=\tthree {p(x)}$, $(\Qd,+,\tthree {p(x)})$ is a degree $d$ field extension over $\Q$, and $\aut(\tthree {p(x)})$ is its Galois group over $\Q$. Since the Galois group of a finite extension of $\Q$ is finite, $\aut(\tthree {p(x)})$ is finite and, by (\ref{eqn:normisautcent}), $\normshort(\tthree {p(x)})$ is a finite union of cosets of $\rthree {p(x)}(\Qdz)$.

This construction allows us to describe all proper multiplications on $\Zt$. Indeed, by the classification of quadratic rings,
\begin{align}\label{eqn:setsofquadringsmults}\big\{ \tthree {x^2-bx - c} \ \big | \ b \in \{0,1\}, \ c \in \Z \setminus \{0^2, 1^2, 2^2, \ldots\} \big\} \subseteq \discrims_2,\end{align}
is a complete set (up to \emph{unital ring} isomorphism) of proper multiplications on $\Zt$ with a multiplicative identity; see \cite[Section 3]{bhargava} and the subsequent papers in the series for parameterizations of quadratic, cubic, quartic, and quintic rings. The remaining multiplications in $\qdiscrims_2$ can be ``reached'' by acting on these via the $\text{GL}_2(\Q)$-action described in Section \ref{sec:ringreps}.

When $p(x) = x^2-bx-c$, the representation $\rthree {p(x)}$ of $\tthree {p(x)}$ can be calculated by hand as
\[\rthree {p(x)} (x) = \matrixx {x_1}{c x_2}{x_2}{x_1 + b x_2}.\]
It is easy to see that the subspace $\rthree {p(x)} (\Q^2)$ is uniquely determined by the values of $b$ and $c$, so Lemma \ref{lem:equivconditionsforaligned} gives that no pair of multiplications in the set in (\ref{eqn:setsofquadringsmults}) are aligned. It follows from Theorem A and Lemma \ref{lem:equivconditionsforaligned} that ``multiplicatively \psstar{}'' in $\Ztz$ is a distinct notion for each of these multiplications.

When $p(x) = x^2-c$, the automorphism group $\aut(\tthree {p(x)})$ is
\[\aut(\tthree {p(x)}) = \left\{ \matrixx 1001, \matrixx 100{-1} \right\},\]
where the non-trivial automorphism corresponds to the usual involution $\sqrt{c} \mapsto -\sqrt{c}$ of $\Z \big[\sqrt{c} \big]$.
By (\ref{eqn:normisautcent}), we can write the normalizer explicitly as
\[\normshort(\tthree {p(x)}) = \left\langle \matrixx 1001, \matrixx {0}{c}{1}{0} \right\rangle_\Q \mcup \left\langle \matrixx 100{-1}, \matrixx {0}{c}{-1}{0} \right\rangle_\Q.\]
Corollary C (as formulated and proved in Section \ref{sec:proofofcorollary}) gives that a matrix $T \in \matdz$ with non-zero determinant preserves $\pws^*(\tthree {p(x)})$ if and only if $T \in \normshort(\tthree {p(x)})$.

The previous construction generates a plethora of commutative multiplications in $\discrims_3$, $\discrims_4$, and beyond. Because the dimension of a finite dimensional division algebra over its center is a square (\cite[Corollary 2.40]{knappbook}), the center of every three dimensional division algebra is the entire division algebra. It follows that every multiplication in $\qdiscrims_3$, and hence in $\discrims_3$, is commutative.

The Lipschitz integral quaternions
\[\quatz = \big\{ x_1 + ix_2 + jx_3 + kx_4 \ \big| \ x_1, x_2, x_3, x_4 \in \Z \big\} \]
provide an example of a non-commutative proper multiplication $\tone_\quat$ on $\Z^4$. The ring $(\Z^4,+,\tone_\quat)$ is a subring of $(\Q^4,+,\tone_\quat)$, a quaternion division algebra over $\Q$. In fact, all non-commutative multiplications in $\qdiscrims_4$ arise from so-called \emph{generalized quaternion algebras} (\cite[Chapter II, 11.16]{knappbook}).

We can compute the left and right representations of $\tone_\quat$ explicitly as
\[\rone_\quat(x) = \left( \begin{array}{cccc} x_1 & -x_2 & -x_3 & -x_4 \\ x_2 & x_1 & -x_4 & x_3 \\ x_3 & x_4 & x_1 & -x_2 \\ x_4 & -x_3 & x_2 & x_1 \end{array} \right), \ \rone_{\quat,\text{r}}(x) = \left( \begin{array}{cccc} x_1 & -x_2 & -x_3 & -x_4 \\ x_2 & x_1 & x_4 & -x_3 \\ x_3 & -x_4 & x_1 & x_2 \\ x_4 & x_3 & -x_2 & x_1 \end{array} \right).\]
Because $(\Q^4,+,\tone_\quat)$ is a central simple $\Q$-algebra, all of its automorphisms are inner (\cite[Corollary 2.42]{knappbook}). Combined with the work from the previous section, it follows that
\begin{align*}
\aut(\tone_\quat) &= \big\{ \rone_{\quat}(x) \rone_{\quat,\text{r}}(x)^{-1} \ \big | \ x \in \Q^4_{\neq 0}\big\} \\
\centshort(\tone_\quat) &=\rone_{\quat,\text{r}}(\Q^4_{\neq 0}), \\
\normshort(\tone_\quat) &= \rone_{\quat}(\Q^4_{\neq 0}) \rone_{\quat,\text{r}}(\Q^4_{\neq 0}).
\end{align*}
In this case, $\tone_{\quat}$ and $(\tone_{\quat})_{\text{op}}$ are isomorphic via $i, j, k \mapsto -i, -j, -k$, respectively. By the work in the previous section,
\[\isom \big(\tone_{\quat},(\tone_{\quat})_{\text{op}} \big) = \left( \begin{array}{cccc} 1&0&0&0 \\ 0&-1&0&0 \\ 0&0&-1&0 \\ 0&0&0&-1 \end{array} \right) \aut(\tone_\quat).\]

The sets $\aut(\tone) \cap \matdz$ and $\isom (\tone,\toneop ) \cap \matdz$ will appear in Section \ref{sec:proofofcorollary}, so we make particular mention of them here. It can be shown that $\aut(\tone_\quat) \cap M_4(\Z)$ is a finite subgroup of $\aut(\tone_\quat)$ isomorphic to $S_4$, the symmetric group on four elements; these correspond to the automorphisms of the Lipschitz quaternions. It follows that $\isom \big(\tone_{\quat},(\tone_{\quat})_{\text{op}} \big) \cap M_4(\Z)$ also consists of 24 matrices.


\section{Proof of Theorem A}\label{sec:manymults}  

We will prove the ``if'' and ``only if'' statements in Theorem A separately as Theorems \ref{thm:equaldensityfunctions} and \ref{thm:thickbutwithoutdensity} below. Recall the density function $\dstar_\tone$ defined in Definition \ref{def:density}.

\begin{theorem}\label{thm:equaldensityfunctions}
If $\tone, \ttwo \in \discrims$ are aligned, then $\dstar_\tone = \dstar_\ttwo$.
\end{theorem}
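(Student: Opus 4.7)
The plan is to use the alignment condition in its explicit form, namely Lemma \ref{lem:equivconditionsforaligned}\ref{item:ltone}: there exist $v, w \in \Zdz$ such that for all $x,y \in \Zd$,
\[x \tone v \tone y = x \ttwo w \ttwo y.\]
By the symmetry of the ``aligned'' relation and of the statement, it suffices to prove the one-sided inequality $\dstar_\tone(A) \leq \dstar_\ttwo(A)$ for every $A \subseteq \Zdz$.

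Fix such an $A$ and a real number $\alpha < \dstar_\tone(A)$. Given an arbitrary $F \in \finitesubsets{\Zdz}$, I would produce a ``test translate'' for $\dstar_\ttwo$ as follows. Let $F' = F \tone v \subseteq \Zdz$; since $(\Zdz,\tone)$ is cancellative and $v \neq 0$, the map $x \mapsto x \tone v$ is injective, so $|F'| = |F|$. Applying the definition of $\dstar_\tone$ to $F'$, choose $s \in \Zdz$ with $|(F' \tone s) \cap A| \geq \alpha |F'| = \alpha |F|$. The key step is then to rewrite this translate using the alignment identity:
\[F' \tone s \;=\; F \tone v \tone s \;=\; F \ttwo w \ttwo s \;=\; F \ttwo (w \ttwo s).\]
Setting $s' = w \ttwo s \in \Zdz$, we conclude $|(F \ttwo s') \cap A| \geq \alpha |F|$. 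Since $F$ was arbitrary, this shows $\dstar_\ttwo(A) \geq \alpha$, and letting $\alpha \nearrow \dstar_\tone(A)$ gives $\dstar_\ttwo(A) \geq \dstar_\tone(A)$.

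There is essentially no obstacle here beyond choosing the correct bookkeeping of the associativity/cancellativity: the whole proof is a one-line transport of a $\tone$-witness to a $\ttwo$-witness via the universal element $v$ (on the $\tone$-side) and $w$ (on the $\ttwo$-side). The only subtle point worth emphasizing is that the cancellativity of both semigroups is what ensures $|F \tone v| = |F|$ and that $w \ttwo s$ remains non-zero, so the resulting witness lives in $\Zdz$ as required by Definition \ref{def:density}.
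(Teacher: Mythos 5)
Your proof is correct and follows the same route as the paper's: apply the alignment identity from Lemma \ref{lem:equivconditionsforaligned}\ref{item:ltone} to transport a witness for $\dstar_\tone$ evaluated at the test set $F \tone v$ into a witness for $\dstar_\ttwo$ at $F$ via $s' = w \ttwo s$, then conclude by symmetry. The only cosmetic difference is that you explicitly name the resulting shift $s'$, whereas the paper leaves it implicit in the display.
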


\begin{proof}
Let $v, w \in \Zdz$ be as in Definition \ref{def:aligned}. Let $A \subseteq \Zdz$ and $\alpha < \dstar_\tone(A)$. Let $F \in \finitesubsets \Zdz$, and note that by cancellativity, $F \tone v \in \finitesubsets \Zdz$ has the same cardinality as $F$. Since $\alpha < \dstar_\tone(A)$, there exists $x \in \Zdz$ such that
\[\alpha \leq \frac{\big| (F \tone v \tone x) \cap A\big|}{|F \tone v|} = \frac{\big| (F \ttwo w \ttwo x) \cap A\big|}{|F|}.\]
Since $F$ was arbitrary, $\dstar_\ttwo(A) \geq \alpha$. Since $\alpha < \dstar_\tone(A)$ was arbitrary, $\dstar_\ttwo(A) \geq \dstar_\tone(A)$. Since $A$ was arbitrary, $\dstar_\ttwo \geq \dstar_\tone$. The conclusion now follows by symmetry.
\end{proof}

\begin{remark}\label{rem:folnersequences}
Suppose $\tone$ and $\ttwo$ are aligned. Using the same idea in the proof of Theorem \ref{thm:equaldensityfunctions}, it can be shown that $(F_n)_{n\in \N} \subseteq \finitesubsets \Zdz$ is a \emph{\folner sequence} with respect to $\tone$ (for all $x \in \Zdz$, $\big| (x \tone F_n) \bigtriangleup F_n \big| \big/ |F_n| \to 0$ as $n \to \infty$) if and only if it is a \folner sequence with respect to $\ttwo$. 
\end{remark}

\begin{corollary}\label{cor:ifdirectionofmaintheoremone}
Let $\arbclass \in \{\syndetic,\thick,\pws,\pws^*,\density,\density^*\}$. If $\tone, \ttwo \in \discrims$ are aligned, then $\arbclass(\tone) = \arbclass(\ttwo)$.
\end{corollary}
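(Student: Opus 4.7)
The plan is to deduce all six equalities from the identity $\dstar_\tone = \dstar_\ttwo$ furnished by Theorem \ref{thm:equaldensityfunctions}, together with the density- and duality-based characterizations recorded in Section \ref{sec:defs}. No further semigroup-level work is needed; each class reduces to a one-line consequence of an earlier lemma.

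For the density classes, $\density(\tone)$ is by definition the collection of those $A \subseteq \Zdz$ with $\dstar_\tone(A) > 0$, so $\dstar_\tone = \dstar_\ttwo$ immediately yields $\density(\tone) = \density(\ttwo)$, and dualizing gives $\density^*(\tone) = \density^*(\ttwo)$. One small technicality is that $\density(\tone)$ is only considered when $(\Zdz,\tone)$ is left amenable, so I should verify that alignment preserves left amenability. By Lemma \ref{lem:nonamenable}, left amenability of $(\Zdz,\tone)$ is equivalent to commutativity of $\tone$, which in turn amounts to $\rone(\Qd) = \roneright(\Qd)$. If $\tone$ and $\ttwo$ are aligned, Lemma \ref{lem:equivconditionsforaligned}\ref{item:ltthree} gives $\rtwo(\Qd) = \rone(\Qd)$, while Lemma \ref{lem:centralizerdescription} applied to $\ttwo$ identifies $\rtworight(\Qd)$ with the centralizer of $\rtwo(\Qd) = \rone(\Qd)$; this centralizer equals $\roneright(\Qd) = \rone(\Qd) = \rtwo(\Qd)$, so $\ttwo$ is commutative whenever $\tone$ is.

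The remaining classes follow from the structural characterizations. Lemma \ref{lem:basicfactsofdensity}\ref{item:basicsofthick} says that thickness in a cancellative semigroup is characterized by $\dstar = 1$, so the equality of density functions gives $\thick(\tone) = \thick(\ttwo)$. Using $\syndetic = \thick^*$ and passing to duals yields $\syndetic(\tone) = \syndetic(\ttwo)$. Finally, Lemma \ref{lem:basicfactsofduals}\ref{item:pwsisintersection} identifies $\pws(\tone)$ with the collection of intersections $C \cap T$ for $C \in \syndetic(\tone)$ and $T \in \thick(\tone)$, so the previous equalities force $\pws(\tone) = \pws(\ttwo)$; dualizing once more gives $\pws^*(\tone) = \pws^*(\ttwo)$. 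There is no real obstacle here: the only subtlety is the amenability transfer handled above, and everything else is formal.
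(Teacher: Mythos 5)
Your proof is correct and follows essentially the same route as the paper's: use Theorem \ref{thm:equaldensityfunctions} directly for $\density$ (hence $\density^*$ by duality), Lemma \ref{lem:basicfactsofdensity}\ref{item:basicsofthick} for $\thick$ (hence $\syndetic = \thick^*$), and Lemma \ref{lem:basicfactsofduals}\ref{item:pwsisintersection} for $\pws$ (hence $\pws^*$). The one thing you add that the paper leaves implicit is the verification that alignment carries left amenability from $\tone$ to $\ttwo$ (via Lemma \ref{lem:nonamenable}, Lemma \ref{lem:equivconditionsforaligned}\ref{item:ltthree}, and Lemma \ref{lem:centralizerdescription}); the paper simply gates the $\density$ case with the hypothesis that both semigroups are amenable, consistent with its standing convention for $\density$, whereas you show the hypothesis on $\ttwo$ is redundant. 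This is a small but genuine improvement in rigor, not a different approach.
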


\begin{proof}
When $(\Zdz,\tone)$ and $(\Zdz,\ttwo)$ are left amenable, Theorem \ref{thm:equaldensityfunctions} gives the corollary for $\density$, hence also for $\density^*$. In general, by Lemma \ref{lem:basicfactsofdensity} \ref{item:basicsofthick}, the corollary holds for $\thick$, hence also for $\thick^* = \syndetic$. Therefore, by Lemma \ref{lem:basicfactsofduals} \ref{item:pwsisintersection}, the corollary holds for $\pws$, hence also for $\pws^*$.
\end{proof}

\begin{remark}
By the symmetry in Definition \ref{def:aligned} \ref{item:aligneddef}, it is easy to see that $\tone$, $\ttwo$ are aligned if and only if $\toneop, \ttwoop$ are aligned. This means that the previous corollary holds for the ``opposite'' classes as well: left syndetic, right thick, left piecewise syndetic, etc...
\end{remark}

To prove the other half of the Theorem A, we assume that $\tone$ and $\ttwo$ are not aligned and construct a set which is ``large'' with respect to $\tone$ but ``small'' with respect to $\ttwo$. In fact, we prove much more in Theorem \ref{thm:thickbutwithoutdensity} below: given any collection of multiplications $\bigmults \subseteq \discrims$, there exists a set which is thick with respect to every multiplication in $\bigmults$ but has zero density with respect to every multiplication not aligned with any multiplication in $\bigmults$.

To construct such a set, we take the union of $\tone$-dilations of the ``cubes''
\[\cube N = \{-N, \ldots, N\}^d \setminus \{0\} \subseteq \Zdz.\]
In order for this set to have zero density with respect to $\ttwo$, the constituent subsets $\cube N \tone x$ must be carefully chosen so as to ``avoid'' $\ttwo$-dilations of finite sets. This idea is captured in the following definition. A subset of $\Zd$ is \emph{linearly dependent over $\Z$} if there exists a non-zero finite $\Z$-linear combination of the elements equal to zero.

\begin{definition}\label{def:avoiding}
Let $\ttwo \in \discrims$ and $G \in \finitesubsets \Zdz$. A set $A \subseteq \Zdz$ is \emph{$(G,\ttwo)$-avoiding} if for all for all $z \in \Zdz$ and for all $F \subseteq G$ with the property that no subset of $d$ points of $F$ is linearly dependent over $\Z$,
\[\big|(F \ttwo z ) \cap A\big| \leq d-1.\]
For $\finitediscrims \subseteq \discrims$, the set $A$ is \emph{$\left(G, \finitediscrims \right)$-avoiding} if for all $\ttwo \in \finitediscrims$, it is $(G,\ttwo)$-avoiding.
\end{definition}

\begin{lemma}\label{lem:thereareavoidingsets}
Let $\tone \in \discrims$, and let $\finitediscrims \subseteq \discrims$ be a finite collection of multiplications, none of which is aligned with $\tone$. For all $G \in \finitesubsets \Zdz$, there exist infinitely many $x \in \Zdz$ for which $G \tone x$ is $\left(G, \finitediscrims \right)$-avoiding.
\end{lemma}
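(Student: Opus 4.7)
The plan is to reduce the combinatorial avoidance condition to a linear-algebra question about the left representations $\rone, \rtwo$. For each tuple $\bigl(\ttwo, (f_i)_{i=1}^d, (g_i)_{i=1}^d\bigr)$ with $\ttwo \in \finitediscrims$, $f_1, \ldots, f_d$ a $\Q$-linearly independent $d$-tuple from $G$, and $g_1, \ldots, g_d \in G$, let $B \subseteq \Qd$ be the set of those $x$ for which there exists $z \in \Qd$ satisfying $\rtwo(f_i) z = \rone(g_i) x$ for every $i$. My goal is to show that each $B$ is a proper $\Q$-subspace of $\Qd$. Since $\finitediscrims$ and $G$ are finite, there are only finitely many such data, so the union of the corresponding sets $B \cap \Zd$ is contained in a finite union of proper $\Q$-subspaces of $\Qd$, whose complement in $\Zdz$ is infinite. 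For any $x$ in this complement, $G \tone x$ is $(G, \finitediscrims)$-avoiding, since an overlap $|(F \ttwo z) \cap (G \tone x)| \geq d$ for some $F \subseteq G$ with no $d$-subset linearly dependent would, by selecting any $d$ of the overlapping elements, furnish a tuple as above with $x \in B$.

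Each $B$ is automatically a $\Q$-subspace of $\Qd$: the $i=1$ equation forces $z = Nx$ with $N := \rtwo(f_1)^{-1} \rone(g_1)$, and substituting into the remaining equations identifies $B$ with $\bigcap_{i=2}^d \ker\bigl(\rtwo(f_i) N - \rone(g_i)\bigr)$. The main obstacle is ruling out $B = \Qd$. Suppose $B = \Qd$, so $\rone(g_i) = \rtwo(f_i) N$ for every $i$. Since $\rtwo$ is injective and $\Q$-linear and $\{f_i\}$ is a basis of $\Qd$, the matrices $\rtwo(f_i)$ are $\Q$-linearly independent; right multiplication by the invertible $N$ preserves this, so $\{\rone(g_i)\}$ is linearly independent and $\{g_i\}$ is a basis of $\Qd$ as well. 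Right-multiplying $\rone(g_i) = \rtwo(f_i) N$ by $\rone(g_1)^{-1} = N^{-1} \rtwo(f_1)^{-1}$ cancels $N$ and yields
\[
\rone(g_i \tone g_1^{-1}) = \rtwo(f_i \ttwo f_1^{-1}) \in \rone(\Qd) \cap \rtwo(\Qd),
\]
where the inverses are taken in the division algebras $(\Qdz, \tone)$ and $(\Qdz, \ttwo)$. Because $\{g_i \tone g_1^{-1}\}_{i=1}^d$ is still a $\Q$-basis of $\Qd$ (it is the image of the basis $\{g_i\}$ under invertible right $\tone$-multiplication by $g_1^{-1}$), the $\rone$-images of these $d$ elements already span $\rone(\Qd)$; hence $\rone(\Qd) \subseteq \rtwo(\Qd)$, and by dimension $\rone(\Qd) = \rtwo(\Qd)$. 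By Lemma \ref{lem:equivconditionsforaligned}, $\tone$ and $\ttwo$ are aligned, contradicting the hypothesis that no element of $\finitediscrims$ is aligned with $\tone$.

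The only step I view as delicate is this alignment contradiction; everything else, namely enumerating the finitely many tuples, extracting some $x \in \Zdz$ from the complement of a finite union of proper $\Q$-subspaces of $\Qd$, and verifying the avoidance condition on $G \tone x$, is routine.
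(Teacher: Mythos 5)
Your proof is correct and takes essentially the same approach as the paper's: both identify the set of $x$ to be avoided with a finite union of proper $\Q$-subspaces built from the matrices $T_{\ttwo,f,g}=\rtwo(f)^{-1}\rone(g)$ for $f,g\in G$, and both invoke Lemma~\ref{lem:equivconditionsforaligned} to obtain the final contradiction from non-alignment. The only organizational difference is that the paper excludes $\nullspace(T_{\ttwo,f,g}-T_{\ttwo,f',g'})$ over pairs with distinct $T$'s (so properness of each excluded subspace is automatic) and then uses condition~(II) of that lemma to force $\Z$-linear dependence of $F$, whereas you exclude the $d$-fold intersections $B$ (note $\rtwo(f_i)N-\rone(g_i)=\rtwo(f_i)\bigl(T_{\ttwo,f_1,g_1}-T_{\ttwo,f_i,g_i}\bigr)$, so your $B$ is exactly such an intersection) and use condition~(III) to establish properness --- a contrapositive repackaging of the same argument.
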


\begin{proof}
Let $G \in \finitesubsets \Zdz$, and let $\rone$ be the representation of $\tone$. For $f, g \in G$ and $\ttwo \in \finitediscrims$, let $T_{\ttwo,f,g} = \rtwo(f)^{-1} \rone(g)$, where $\rtwo$ is the representation of $\ttwo$. Note that $T_{\ttwo,f,g} \in \gldq$. We will show that for all $x$ in the set
\begin{align}\label{eqn:hugegoodset}\Zdz \setminus \bigcup_{\substack{\ttwo \in \finitediscrims \\ f,f',g,g' \in G \\ T_{\ttwo,f,g} \neq T_{\ttwo,f',g'}}} \nullspace \big(T_{\ttwo,f,g} - T_{\ttwo,f',g'} \big),\end{align}
the set $G \tone x$ is $\left(G, \finitediscrims \right)$-avoiding. This suffices to prove the lemma since the set in (\ref{eqn:hugegoodset}) is infinite; indeed, it is $\Zdz$ with a finite number of strictly lower-dimensional linear subspaces removed.

Let $x$ be an element of the set in (\ref{eqn:hugegoodset}). Fix $\ttwo \in \finitediscrims$, let $\rtwo$ be its representation, and write
\begin{align}\label{eqn:expressionfortrans} T_{f,g} = \rtwo(f)^{-1} \rone(g).\end{align}
To show that $G \tone x$ is $(G, \ttwo)$-avoiding, it suffices to prove: for all $F \subseteq G$ with $|F|=d$, if there exists $z \in \Zdz$ such that $F \ttwo z \subseteq G \tone x$, then $F$ is linearly dependent over $\Z$.

Let $F = \{f_j\}_{j=1}^d \subseteq G$ and suppose that $z \in \Zdz$ is such that $F \ttwo z \subseteq G \tone x$. There exists $\{g_j\}_{j=1}^d \subseteq G$ such that
\[\forall \ j \in \{1, \ldots, d\}, \quad \rtwo(f_j)z = f_j \ttwo z = g_j \tone x = \rone(g_j)x.\]
This means that for all $j \in \{1,\ldots, d\}$, $z = T_{f_j,g_j} x$, whereby for all $j, k \in \{1, \ldots, d\}$, $x \in \nullspace (T_{f_j,g_j} - T_{f_k,g_k})$. Since $x$ was chosen from the set in (\ref{eqn:hugegoodset}), it follows that $T_{f_j,g_j} = T_{f_k,g_k}$. This equation rearranges with the help of (\ref{eqn:expressionfortrans}) to
\begin{align}\label{eqn:inverseequalityforjk}\forall \ j, k \in \{1, \ldots, d\}, \quad \rtwo(f_k) \rtwo(f_j)^{-1} = \rone(g_k) \rone(g_j)^{-1}.\end{align}

Consider the equations in (\ref{eqn:inverseequalityforjk}) with $j=1$ fixed. By Lemma \ref{lem:reachestheid}, there exists $b \in \N$ and $\tilde{f_1}, \tilde{g_1} \in \Zdz$ such that $b\rone(g_1)^{-1} = \rone(\tilde{g_1})$ and $b \rtwo(f_1)^{-1} = \rtwo(\tilde{f_1})$. Multiplying the equations in (\ref{eqn:inverseequalityforjk}) by $b$, we see that
\[\forall \ k \in \{1, \ldots, d\}, \quad \rtwo(f_k \ttwo \tilde{f_1}) = \rone(g_k \tone \tilde{g_1}).\]
Now $\{\rtwo(f_k \ttwo \tilde{f_1})\}_{k=1}^d$ is a set of $d$ matrices contained in $\ronezd \cap \rtwozd$. Since $\tone$ and $\ttwo$ are not aligned, Lemma \ref{lem:equivconditionsforaligned} gives that $\ronezd \cap \rtwozd$ is a lattice of dimension at most $(d-1)$. Therefore, there exists $\xi \in \Zdz$ such that
\[\sum_{k=1}^d \xi_k \rtwo(f_k \ttwo \tilde{f_1}) = \rtwo \left( \left(\sum_{k=1}^d \xi_k f_k \right) \ttwo \tilde{f_1} \right) = 0.\]
Using the injectivity of $\rtwo$ and the fact that $\ttwo$ has no zero divisors, we conclude that $\sum_{k=1}^d \xi_k f_k = 0$,
meaning that $F$ is linearly dependent over $\Z$.
\end{proof}

The following two lemmas will be useful in the proof of Theorem \ref{thm:thickbutwithoutdensity}. Let $\euclid {\ \cdot \ }$ denote the Euclidean norm on $\Qd$. Lemma \ref{lem:infinitelymanyshiftsinposdensity} makes it easier to prove that a set has zero density.

\begin{lemma}\label{lem:asymptoticnormbehavior}
Let $\tone \in \discrims$ and $y \in \Zdz$. There exists a constant $K = K(y,\tone) > 0$ such that for all $x \in \Zdz$, 
\[K^{-1} \max \big(\euclid{ x \tone y }, \euclid{ y \tone x }\big) \leq \euclid{x} \leq K \min \big(\euclid{ x \tone y }, \euclid{ y \tone x }\big).\]
\end{lemma}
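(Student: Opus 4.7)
The plan is to deduce this from the representation machinery set up in Lemma \ref{lem:repproperties}. Using the identities $x \tone y = \roneright(y) x$ and $y \tone x = \rone(y) x$, where $\rone(y), \roneright(y) \in \matdq$ are matrices depending only on $y$ and $\tone$, the claim reduces to a standard operator-norm estimate for invertible linear maps on $\Q^d$.

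First I would observe that because $\tone$ has no zero divisors and $y \neq 0$, Lemma \ref{lem:repproperties} gives that $\rone(y)$ and $\roneright(y)$ lie in $\gldq$, i.e.\ both are invertible elements of $\matdq$. Consequently, the operator norms with respect to the Euclidean norm on $\Qd$,
\[ \|\rone(y)\|_{\text{op}},\ \|\rone(y)^{-1}\|_{\text{op}},\ \|\roneright(y)\|_{\text{op}},\ \|\roneright(y)^{-1}\|_{\text{op}}, \]
are all finite and strictly positive. Define $K = K(y,\tone)$ to be their maximum.

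For the upper estimate on $\euclid{x \tone y}$, write $x \tone y = \roneright(y) x$ and apply the operator-norm bound to get $\euclid{x \tone y} \leq K \euclid x$. For the lower estimate, apply the operator-norm bound to $\roneright(y)^{-1}$ acting on $x \tone y$, yielding $\euclid x = \euclid{\roneright(y)^{-1}(x \tone y)} \leq K \euclid{x \tone y}$. The analogous two inequalities for $y \tone x = \rone(y)x$ use $\rone(y)$ and $\rone(y)^{-1}$ in place of $\roneright(y)$ and $\roneright(y)^{-1}$. Combining the four inequalities and taking minima and maxima over the two choices gives the claimed two-sided bound.

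There is no real obstacle here: the lemma is essentially a packaging of the fact that multiplication by a fixed non-zero element in a finite-dimensional division algebra over $\Q$ is an invertible linear transformation, and invertible linear transformations preserve the Euclidean norm up to a multiplicative constant. The only small point to verify is that $\rone(y)$ and $\roneright(y)$ are indeed invertible, which is exactly the content of Lemma \ref{lem:repproperties} combined with $y \neq 0$.
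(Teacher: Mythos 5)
Your proposal is correct and takes essentially the same approach as the paper, which reduces the claim to the two-sided operator-norm bound for the fixed invertible matrices $\rone(y)$ and $\roneright(y)$ coming from Lemma~\ref{lem:repproperties}. Your writeup is, if anything, slightly more careful: the paper's own proof contains a small slip, writing $x \tone y = \roneright(x)y$ and $y \tone x = \rone(x)y$ (which would make the constant depend on $x$), whereas the intended and correct identities, which you use, are $x \tone y = \roneright(y)x$ and $y \tone x = \rone(y)x$.
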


\begin{proof}
For all $T \in \gldq$, there exists a constant $K = K(T) > 0$ such that for all $x \in \Qdz$,
\[K^{-1}\euclid{Tx} \leq \euclid{x} \leq K\euclid{Tx}.\]
The lemma follows since $x \tone y = \roneright(x)y$ and $y \tone x = \rone(x)y$ and $\rone(x), \roneright(x) \in \gldq$.
\end{proof}

\begin{lemma}\label{lem:infinitelymanyshiftsinposdensity}
For all $\tone \in \discrims$ and $A \subseteq \Zdz$,
\begin{align}\label{eqn:infinitelymanyshiftsinposdensity}\dstar_{\tone}(A) = \sup \left\{ \alpha \geq 0 \ \middle| \ \begin{aligned} & \text{for all } F \in \finitesubsets \Zdz, \ \text{there exist infinitely} \\ & \text{many } z \in \Zdz \text{ such that } \big|(F\tone z) \cap A\big| \geq \alpha |F| \end{aligned} \right\}.\end{align}
In particular, $\dstar_{\tone}(A) = 0$ if and only if for all $\eps > 0$, there exists $F \in \finitesubsets \Zdz$ and a co-finite set $P \subseteq \Zdz$ such that for all $z \in P$, $\big|(F\tone z) \cap A\big| < \eps |F|$.
\end{lemma}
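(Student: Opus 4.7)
\medskip

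\noindent\textbf{Proof plan.} Denote the supremum on the right-hand side of (\ref{eqn:infinitelymanyshiftsinposdensity}) by $\widetilde{d}_\tone^*(A)$. The inequality $\widetilde{d}_\tone^*(A) \leq \dstar_\tone(A)$ is immediate from the two definitions, since ``infinitely many $z$'' implies ``there exists $z$.'' The substantial content is the reverse inequality, which I plan to establish by an averaging argument after inflating $F$.

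Fix $\alpha < \dstar_\tone(A)$ and choose $\beta$ with $\alpha < \beta < \dstar_\tone(A)$. Given $F \in \finitesubsets \Zdz$, I aim to show that the set
\[S = \big\{ z \in \Zdz \ \big| \ |(F \tone z) \cap A| \geq \alpha |F| \big\}\]
is infinite. Assume for contradiction that $|S| = m < \infty$, and pick an integer $N > m(1-\alpha)/(\beta-\alpha)$. The plan is to choose distinct $y_1, \ldots, y_N \in \Zdz$ such that the translates $F \tone y_1, \ldots, F \tone y_N$ are pairwise disjoint, and then apply the definition of $\dstar_\tone(A)$ to $F' = \bigsqcup_{i=1}^N F \tone y_i$, which has cardinality $N|F|$.

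The disjointness of the translates is the main technical hurdle. I plan to select $y_1, y_2, \ldots, y_N$ inductively, noting that $(F \tone y_i) \cap (F \tone y_j) \neq \emptyset$ forces $y_j \in \rone(f')^{-1} \rone(f) y_i$ for some pair $(f,f') \in F \times F$ (where $\rone$ is the representation of $\tone$, using Lemma \ref{lem:reachestheid} to invert). Thus at each stage, $y_j$ need only avoid finitely many specific points of $\Qd$, so an integer choice exists. Alternatively, choosing the $y_i$ with sufficiently rapidly growing Euclidean norm and invoking Lemma \ref{lem:asymptoticnormbehavior} yields the same disjointness.

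Once $F'$ is built, the definition of $\dstar_\tone(A) > \beta$ provides $z' \in \Zdz$ with $|(F' \tone z') \cap A| \geq \beta N |F|$. By cancellativity (applied twice: to preserve disjointness of $F \tone y_i \tone z'$ and to ensure the shifts $y_i \tone z'$ are pairwise distinct), the union $F' \tone z' = \bigsqcup_{i=1}^N F \tone (y_i \tone z')$ is disjoint, so
\[\sum_{i=1}^N \big|\big(F \tone (y_i \tone z')\big) \cap A\big| \geq \beta N |F|.\]
A routine averaging estimate (writing $x_i = |(F \tone (y_i \tone z')) \cap A|/|F| \in [0,1]$ and using $\sum x_i \geq \beta N$) shows that at least $\lceil N(\beta - \alpha)/(1-\alpha) \rceil > m$ indices $i$ satisfy $x_i \geq \alpha$, producing more than $m$ distinct elements of $S$, contradicting $|S| = m$.

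Finally, the ``in particular'' clause is immediate from the main identity: $\dstar_\tone(A) = 0$ exactly when, for every $\alpha > 0$, the condition defining $\widetilde{d}_\tone^*(A)$ fails, i.e., there exists $F$ for which the set $\{z : |(F \tone z) \cap A| \geq \alpha|F|\}$ is finite; taking $\alpha = \eps$ and letting $P$ be the (cofinite) complement yields the stated reformulation, and the reverse direction is trivial.
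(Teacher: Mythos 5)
Your proof is correct, but it takes a genuinely different route from the paper's. You argue by contradiction: assuming $S = \{z : |(F\tone z)\cap A| \geq \alpha|F|\}$ has size $m < \infty$, you build $N \gg m$ pairwise disjoint translates $F\tone y_1,\ldots,F\tone y_N$, apply the density definition to their union $F'$, and use right cancellativity together with an averaging estimate to extract more than $m$ elements of $S$. Your two suggested mechanisms for arranging disjointness (avoiding the finitely many points $\rone(f)^{-1}\rone(f')y_i$, or forcing rapid norm growth via Lemma \ref{lem:asymptoticnormbehavior}) both work. The paper instead argues directly: it first observes that for every $z\in\Zdz$ there is a $\zeta$ with $z\tone\zeta \in S_\al(F)$ (apply the density definition to the shifted set $F\tone z$ and use left cancellativity), then invokes Lemma \ref{lem:reachestheid} to produce $w\in\Zdz$ with $\rone(w)=c\idd$, so that the elements $(nw)\tone\zeta_n = cn\zeta_n\in S_\al(F)$ have unbounded norm and hence are infinitely many. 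The paper's argument is shorter and avoids any averaging, at the cost of leaning on the algebraic fact that the representation hits scalar matrices; your argument is a more classical double-counting upgrade from ``there exists $z$'' to ``there are many $z$'' and would transfer to other cancellative semigroups where one can manufacture disjoint translates, but it is longer. Both are sound.
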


\begin{proof}
Denote temporarily the right hand side of (\ref{eqn:infinitelymanyshiftsinposdensity}) by $\dstar_{\tone,\infty}(A)$. Clearly $\dstar_{\tone,\infty}(A) \leq \dstar_{\tone}(A)$, so it suffices prove that $\dstar_{\tone,\infty}(A) \geq \al$ for all $\al < \dstar_{\tone}(A)$.

Let $\al < \dstar_{\tone}(A)$. We must show that for all $F \in \finitesubsets \Zdz$, the set
\[S_\al(F) = \big\{ z \in \Zdz \ \big| \ \big|(F\tone z) \cap A\big| \geq \alpha |F| \big\}\]
is infinite.

Let $F \in \finitesubsets \Zdz$. First, we claim that for all $z \in \Zdz$, there exists $\zeta \in \Zdz$ for which $z \tone \zeta \in S_\al(F)$. Indeed, since $\al < \dstar_{\tone}(A)$, we know $S_\al(F \tone z)$ is non-empty. For $\zeta \in S_\al(F \tone z)$,
\[\big| (F \tone z \tone \zeta ) \cap A \big| \geq \al |F \tone z| = \al |F|,\]
meaning $z \tone \zeta \in S_\al(F)$.

By Lemma \ref{lem:reachestheid}, there exist $c \in \N$ and $w \in \Zdz$ for which $\rone(w) = c \idd$. By the previous remarks, there exists $(\zeta_n)_{n \in \N} \subseteq \Zdz$ such that for all $n \in \N$,
\[c n \zeta_n = (nw) \tone \zeta_n \in S_\al(F).\]
Since $(c n \zeta_n)_{n \in \N} \subseteq S_\al(F)$ and $|\{c n \zeta_n\}_{n \in \N}| = \infty$, the set $S_\al(F)$ is infinite.
\end{proof}

\begin{theorem}\label{thm:thickbutwithoutdensity}
Let $\bigmults \subseteq \discrims$. There exists $A \subseteq \Zdz$ with the property that
\begin{enumerate}[label=(\Roman*)]
\item\label{item:reallythick} for all $\tone \in \bigmults$, $A \in \thick(\tone)$;
\item for all $\ttwo \in \discrims$ which are not aligned with any multiplication in $\bigmults$, $A \not\in \pws(\ttwo)$ and, if $(\Zdz,\ttwo)$ is left amenable, $A \not\in \density(\ttwo)$.
\end{enumerate}
\end{theorem}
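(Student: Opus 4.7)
The plan is to construct $A$ explicitly as a union of ``blocks.'' Enumerate pairs $(n,i)$ with $i\leq n$ and $\tone^{(i)}\in\bigmults$ as a sequence $(n_k,i_k)_{k\in\N}$, and enumerate the multiplications in $\discrims$ not aligned with any $\tone\in\bigmults$ as $\ttwo^{(1)},\ttwo^{(2)},\ldots$. At step $k$, Lemma \ref{lem:thereareavoidingsets} produces infinitely many $x_k\in\Zdz$ for which $B_k:=\cube{n_k}\tone^{(i_k)}x_k$ is $\big(\cube{n_k},\{\ttwo^{(1)},\ldots,\ttwo^{(n_k)}\}\big)$-avoiding. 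Among these, I pick $x_k$ whose Euclidean norm is so large that, via Lemma \ref{lem:asymptoticnormbehavior}, every element of $B_k$ has norm exceeding those of all elements of $B_1\cup\cdots\cup B_{k-1}$ by a rapidly growing factor. Set $A:=\bigcup_{k\in\N}B_k$.

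Thickness is immediate from the construction. Given $\tone^{(i)}\in\bigmults$ and any finite $F\subseteq\Zdz$, pick $N\geq i$ with $F\subseteq\cube{N}$; for the index $k$ with $(n_k,i_k)=(N,i)$ we have $F\tone^{(i)}x_k\subseteq\cube{N}\tone^{(i)}x_k=B_k\subseteq A$, so $A\in\thick(\tone^{(i)})$.

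Fix now $\ttwo=\ttwo^{(j)}$ not aligned with any $\tone\in\bigmults$. The central estimate I will establish is: for any finite $F\subseteq\cube{N_0}$ in ``general position'' (no $d$-subset $\Z$-linearly dependent), there exists a constant $K=K(F)$ with $|(F\ttwo z)\cap A|\leq K(d-1)$ for all $z\in\Zdz$ outside a finite exceptional set. Indeed, for each $k$ with $n_k\geq\max(N_0,j)$, the avoiding property forces $|(F\ttwo z)\cap B_k|\leq d-1$; the finitely many ``small'' blocks with $n_k<\max(N_0,j)$ contribute only for $z$ in a finite set (by cancellativity); and the norm separation, via Lemma \ref{lem:asymptoticnormbehavior}, ensures that $F\ttwo z$ can meet at most $K$ of the remaining blocks simultaneously, because $F\ttwo z$ has Euclidean norms in a bounded ratio while the gap ratios between consecutive block-norm-ranges eventually exceed it. Taking $|F|$ larger than $K(d-1)/\eps$ and invoking Lemma \ref{lem:infinitelymanyshiftsinposdensity} yields $\dstar_\ttwo(A)=0$, so $A\notin\density(\ttwo)$ when $(\Zdz,\ttwo)$ is left amenable.

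To show $A\notin\pws(\ttwo)$ in general (amenable or not), I use that $A\in\pws(\ttwo)$ is equivalent, via Lemma \ref{lem:basicfactsofdensity} \ref{item:basicsofthick} applied to the set $E(F):=\{z:F\ttwo z\cap A\neq\emptyset\}$, to $\dstar_\ttwo(E(F))=1$ for some finite $F$. Given such $F$, pick $F'\subseteq\Zdz$ in general position inside some fixed cube and of arbitrary size; a direct count using cancellativity yields $|F'\ttwo s\cap E(F)|\leq|F|\cdot|(F\ttwo F')\ttwo s\cap A|$ (each element of $A$ obtained as $f\ttwo f'\ttwo s$ is counted at most $|F|$ times over the relevant pairs). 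Decomposing $F\ttwo F'=\bigcup_{f\in F}(f\ttwo F')$ and noting that each $f\ttwo F'=\rtwo(f)F'$ remains in general position (since $\rtwo(f)\in\gldq$ preserves $\Z$-linear independence) and lies in a fixed cube, the central estimate applies termwise and produces a bound on $|(F\ttwo F')\ttwo s\cap A|$ that is independent of $|F'|$ for cofinitely many $s$. Taking $|F'|$ sufficiently large then forces $\dstar_\ttwo(E(F))<1$. The main obstacle is calibrating the norm growth of $\|x_k\|$ with enough foresight that the constant $K$ in the central estimate remains uniformly bounded as $F$ (and hence the decomposition pieces $f\ttwo F'$) vary within any fixed cube.
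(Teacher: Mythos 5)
Your construction — blocks $B_k=\cube{n_k}\tone^{(i_k)}x_k$ chosen via Lemma \ref{lem:thereareavoidingsets} to be avoiding against an exhausting chain of non-aligned multiplications, with norms separated so that ratios between consecutive blocks diverge — is exactly the paper's construction (the paper uses $\normmin{H_n}>n\normmax{H_{n-1}}$), and your thickness and zero-density arguments match the paper's. For $A\notin\pws(\ttwo)$ the paper proves directly that $d^*_{\ttwo}\big(\bigcup_i y_i^{-1}\ttwo A\big)=0$ for any $y_1,\ldots,y_k$, which is the same set as your $E(F)$ with $F=\{y_1,\ldots,y_k\}$; you instead show $d^*_{\ttwo}(E(F))<1$ via the $F'$-counting bound $|(F'\ttwo s)\cap E(F)|\leq |F|\sum_{f\in F}|(f\ttwo F'\ttwo s)\cap A|$. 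Both routes rest on the identical counting estimate, so the difference is cosmetic; the paper's version is slightly leaner because it avoids the extra factor of $|F|$ and delivers density zero in one step, which is also needed for the $\density(\ttwo)$ conclusion.

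The ``main obstacle'' you flag is not an actual obstacle. With the growth condition $\normmin{H_n}>n\normmax{H_{n-1}}$, the key Claim in the paper's proof is that for each finite $F$ in general position, $F\ttwo z$ meets at most \emph{one} block $H_n$ with $n\geq N_0(\ttwo,F)$ for every $z$: by Lemma \ref{lem:asymptoticnormbehavior} the norms of $F\ttwo z$ span a bounded ratio determined by $F$, while the gap ratio $\normmin{H_{n+1}}/\normmax{H_n}>n+1$ diverges. Hence your central-estimate constant $K$ is always $1$ — only the threshold $N_0(\ttwo,F)$ and the finite exceptional set of $z$ vary with $F$ (and with your decomposition pieces $f\ttwo F'$). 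Since Lemma \ref{lem:infinitelymanyshiftsinposdensity} allows you to discard a finite exceptional set, the bound $|(F'\ttwo s)\cap E(F)|\leq |F|^2(d-1)$ holds for cofinitely many $s$ with no uniformity problem, and taking $|F'|>2|F|^2(d-1)$ finishes the argument. You should therefore pin down the growth factor to be unbounded (not merely ``large''), at which point your proof closes.
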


\begin{proof}
It suffices to show that there exists $A \subseteq \Zdz$ satisfying \ref{item:reallythick} with the property that for all $y_1, \ldots, y_k \in \Zdz$ and all $\ttwo \in \discrims$ not aligned with $\tone$,
\begin{align}\label{eqn:killspsanddensity}d^*_{\ttwo} \big( y_1^{-1} \ttwo A \cup \ldots \cup y_k^{-1} \ttwo A \big) = 0.\end{align}
Indeed, by the definition of piecewise syndeticity and Lemma \ref{lem:basicfactsofdensity} \ref{item:basicsofthick}, equation (\ref{eqn:killspsanddensity}) shows that $A \not\in \pws(\ttwo)$. It follows from Lemma \ref{lem:basicfactsofdensity} \ref{item:inversetranslatehasmoredensity} and (\ref{eqn:killspsanddensity}) that for any $y \in \Zdz$,
\[0 \leq d^*_{\ttwo} (A) \leq d^*_{\ttwo} \big( y^{-1} \ttwo A \big) = 0.\]
Therefore, if $(\Zdz,\ttwo)$ is left amenable, then $A$ has zero density with respect to $\ttwo$.

Since $\discrims$ is countable, there exists a chain $\finitediscrims_1 \subseteq \finitediscrims_2 \subseteq \cdots \subseteq \discrims$ of finite subsets of $\discrims$ which exhaust those multiplications not aligned with any multiplication in $\bigmults$. Let $(\tone_n)_{n \in \N}$ be a sequence in $\bigmults$ which visits every element of $\bigmults$ infinitely often. Using Lemmas \ref{lem:thereareavoidingsets} and \ref{lem:asymptoticnormbehavior}, choose inductively $x_1, x_2, \ldots \in \Zdz$ so that the set $H_n = \cube n \tone_n x_n$ is $\left(\cube n, \finitediscrims_n \right)$-avoiding and satisfies $\normmin {H_{n}} > n \normmax{H_{n-1}}$, where for non-empty $F \in \finitesubsets \Zdz$ we write
\[\normmin F = \min_{f\in F}\euclid{f}, \qquad \normmax{F} = \max_{f\in F}\euclid{f}.\]
Finally, put $A = \cup_n H_n$.

By construction, $A$ is thick with respect to every multiplication in $\bigmults$. To prove (\ref{eqn:killspsanddensity}), let $y_1, \ldots, y_k \in \Zdz$ and suppose $\ttwo \in \discrims$ is not aligned with any multiplication in $\bigmults$.

\begin{claim*}\label{claim:intersectsonlyonefarout}
For all non-empty $F \in \finitesubsets \Zdz$, there exists $N_0 = N_0(\ttwo,F) \in \N$ with the property that for all $z \in \Zdz$,
\begin{align*}\big| \{ n \geq N_0(\ttwo,F) \ | \ (F \ttwo z) \cap H_n \neq \emptyset \} \big| \leq 1.\end{align*}
\end{claim*}

\begin{proof}
By Lemma \ref{lem:asymptoticnormbehavior}, there exists $N_0=N_0(\ttwo,F) \in \N$ such that for all $f \in F$ and all $z \in \Zdz$,
\[N_0^{-1} \normmax{F \ttwo z} \leq \euclid {f \ttwo z} \leq N_0 \normmin{F \ttwo z}.\]
Let $n \geq N_0$, and suppose that $(F \ttwo z) \cap H_n \neq \emptyset$. Let $f \in F$ be such that $f \ttwo z \in H_n$, and note that
\begin{align*}
n &\normmax{H_{n-1}} < \normmin{H_n} \leq \euclid {f \ttwo z} \leq N_0 \normmin{F \ttwo z},\\
(n+1)^{-1} &\normmin{H_{n+1}} > \normmax{H_n} \geq \euclid {f \ttwo z} \geq N_0^{-1} \normmax{F \ttwo z}.
\end{align*}
Since $n \geq N_0$, this means
\[\normmax{H_{n-1}} < \normmin{F \ttwo z} \leq \normmax{F \ttwo z} < \normmin{H_{n+1}}.\]
This means that the set $F \ttwo z$ is positioned between shells containing $H_{n-1}$ and $H_{n+1}$; in particular, it can have non-empty intersection with $H_n$ only.
\end{proof}

To show (\ref{eqn:killspsanddensity}), it suffices by Lemma \ref{lem:infinitelymanyshiftsinposdensity} to show that for all $\eps > 0$, there exists $F \in \finitesubsets \Zdz$ and a cofinite $P \subseteq \Zdz$ such that for all $z \in P$,
\begin{align}\label{eqn:requiredsmallintersection}
\big| (F \ttwo z) \cap (y_1^{-1} \ttwo A \cup \ldots \cup y_k^{-1} \ttwo A) \big| < \eps |F|.
\end{align}

Let $\eps > 0$, and let $F \in \finitesubsets \Zdz$ satisfy $|F| > k d \eps^{-1}$ and be such that no subset of $d$ points is linearly dependent over $\Z$. Let
\[N \geq \max \left( \big\{\normmax{y_i \ttwo F} \big\}_{i=1}^k \cup \big\{ N_0(\ttwo,y_i \ttwo F) \big\}_{i=1}^k \right)\]
be sufficiently large so that $\ttwo \in \finitediscrims_N$, and set
\[P = \left\{ z \in \Zdz \ \middle| \ \text{for all } 1 \leq i \leq k, \ \normmin{y_i \ttwo F \ttwo z} > \normmax{H_N} \right\}.\]
By Lemma \ref{lem:asymptoticnormbehavior}, the set $P$ is co-finite. Note that by the definition of $P$, if $z \in P$ and $(y_i \ttwo F \ttwo z) \cap H_n \neq \emptyset$, then $n > N$.

To show (\ref{eqn:requiredsmallintersection}), let $z \in P$ and $1 \leq i \leq k$. The left hand side of (\ref{eqn:requiredsmallintersection}) is bounded from above by
\[\sum_{i=1}^k \big|(F \ttwo z) \cap (y_i^{-1} \ttwo A)\big| = \sum_{i=1}^k \big|(y_i \ttwo F \ttwo z) \cap A \big|,\]
where the equality follows by left cancellativity. Therefore, it suffices to bound each term in the sum on the right hand side by $|F| \eps \big/ k$.

Fix $i \in \{1, \ldots, k\}$. If $(y_i \ttwo F \ttwo z) \cap A \neq \emptyset$, then by the definition of $P$ and the previous claim, there is an $n > N$ for which
\[(y_i \ttwo F \ttwo z) \cap A = (y_i \ttwo F \ttwo z) \cap H_n.\]
Since $n > N$, the set $y_i \ttwo F \subseteq \cube n$, and $H_n$ is $(\cube n, \ttwo)$-avoiding. Because no $d$ points of $F$ are linearly dependent over $\Z$ and $\ttwo$ has no zero divisors, no $d$ points of $y_i \ttwo F$ are linearly dependent over $\Z$. It follows by Definition \ref{def:avoiding} that
\[|(y_i \ttwo F \ttwo z) \cap A| = |(y_i \ttwo F \ttwo z) \cap H_n| \leq d-1 < |F| \eps \big/ k,\]
completing the proof of (\ref{eqn:requiredsmallintersection}) and the theorem.
\end{proof}

\begin{corollary}\label{cor:noclasscontainment}
Let $\arbclass \in \{\syndetic,\thick,\pws,\pws^*,\density,\density^*,\central,\central^*\}$. If $\tone, \ttwo \in \discrims$ are not aligned, then $\arbclass(\tone) \not\subseteq \arbclass(\ttwo)$ and $\arbclass(\ttwo) \not\subseteq \arbclass(\tone)$.
\end{corollary}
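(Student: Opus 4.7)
The plan is to use Theorem \ref{thm:thickbutwithoutdensity} as a black box. Apply it with $\bigmults = \{\tone\}$ to obtain $A \subseteq \Zdz$ satisfying $A \in \thick(\tone)$, $A \notin \pws(\ttwo)$, and, when $(\Zdz,\ttwo)$ is left amenable, $\dstar_\ttwo(A) = 0$. Because the hypothesis ``$\tone$ and $\ttwo$ are not aligned'' is symmetric in the two multiplications, for each class $\arbclass$ it suffices to establish only one of the two non-containments asserted by the corollary; the other follows by interchanging the roles of $\tone$ and $\ttwo$. Each class will be handled by exhibiting as witness either $A$ itself or its complement $B := \Zdz \setminus A$.

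For $\arbclass \in \{\thick,\central,\pws,\density\}$, the containments $\thick \subseteq \central \subseteq \pws \subseteq \density$ displayed in Figure \ref{fig:containmentdiagram} immediately give $A \in \arbclass(\tone)$; the same containments combined with $A \notin \pws(\ttwo)$ (or, for $\arbclass = \density$, with $\dstar_\ttwo(A) = 0$) force $A \notin \arbclass(\ttwo)$. For $\arbclass = \syndetic$, a short unwinding of Definition \ref{def:syndthick} shows that $S$ is syndetic iff $\Zdz \setminus S$ is not thick. Thus $B \notin \syndetic(\tone)$ (since $A \in \thick(\tone)$) while $B \in \syndetic(\ttwo)$ (since $A \notin \pws(\ttwo) \supseteq \thick(\ttwo)$), which gives $B \in \syndetic(\ttwo) \setminus \syndetic(\tone)$.

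For the starred classes $\arbclass^* \in \{\pws^*,\central^*,\density^*\}$, the witness is again $B$. One direction is direct: $A \in \thick(\tone) \subseteq \arbclass(\tone)$ and $A \cap B = \emptyset$, so $B \notin \arbclass^*(\tone)$. For $B \in \arbclass^*(\ttwo)$, each class $\arbclass$ in question is closed under supersets, and so no $C \in \arbclass(\ttwo)$ may be contained in $A$: any such $C$ would force $A$ itself into $\arbclass(\ttwo) \subseteq \pws(\ttwo)$ (or into $\density(\ttwo)$ in the amenable case), contradicting the chosen properties of $A$. Hence every $C \in \arbclass(\ttwo)$ meets $B$.

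There is no substantive obstacle in this argument: Theorem \ref{thm:thickbutwithoutdensity} already supplies the difficult $(\thick,\pws)$-separation, and what remains is a short bookkeeping step that propagates this separation through the containment diagram to each of the listed classes.
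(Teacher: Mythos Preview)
Your proof is correct and follows essentially the same approach as the paper's: both invoke Theorem \ref{thm:thickbutwithoutdensity} to obtain a set $A \in \thick(\tone) \setminus \pws(\ttwo)$ and then push this separation through the containment diagram in Figure \ref{fig:containmentdiagram}, finishing by symmetry in $\tone, \ttwo$. The only cosmetic difference is that for the starred classes you explicitly exhibit the complement $B = \Zdz \setminus A$ as a witness, whereas the paper argues abstractly via the implication ``$\arbclass(\tone) \not\subseteq \arbclass(\ttwo)$ forces $\arbclass^*(\ttwo) \not\subseteq \arbclass^*(\tone)$'' (which holds since $\arbclass$ is upward closed, so $\arbclass^{**} = \arbclass$); unwinding that duality gives exactly your complement argument.
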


\begin{proof}
Theorem \ref{thm:thickbutwithoutdensity} gives that $\thick(\tone) \not\subseteq \pws(\ttwo)$. By Lemma \ref{lem:hierarchylemma}, $\thick(\tone) \subseteq \central(\tone)$ and $\central(\ttwo) \subseteq \pws(\ttwo)$, so $\central(\tone) \not\subseteq \central(\ttwo)$, whereby $\central^*(\ttwo) \not\subseteq \central^*(\tone)$. Similarly, $\thick(\tone) \subseteq \pws(\tone)$, so $\pws(\tone) \not\subseteq \pws(\ttwo)$, whereby $\pws^*(\ttwo) \not\subseteq \pws^*(\tone)$.  Again, since $\thick(\ttwo) \subseteq \pws(\ttwo)$, $\thick(\tone) \not\subseteq \thick(\ttwo)$, whereby $\syndetic(\ttwo) \not\subseteq \syndetic(\tone)$. If both $(\Zdz,\tone)$ and $(\Zdz,\ttwo)$ are left amenable, the previous statements hold with $\pws$ replaced by $\density$ and $\pws^*$ replaced by $\density^*$. The remaining statements follow by interchanging $\tone$ and $\ttwo$.
\end{proof}

Theorem \ref{thm:equaldensityfunctions} and Corollary \ref{cor:noclasscontainment} combine to complete the proof of Theorem A from the introduction.

The result in Theorem \ref{thm:thickbutwithoutdensity} can be extended in certain cases. Recall the notation from Section \ref{sec:earlyexamples}. When $c$ is positive, one can prove the following theorem by exploiting matrices in $\rthree {x^2-c}(\Ztz)$ with eigenvalues of absolute value less than and greater than $1$.

\begin{theorem}
Let $C \subseteq \N \setminus \{1^2,2^2,\ldots\}$. There exists $A \subseteq \Ztz$ with the property that
\begin{enumerate}[label=(\Roman*)]
\item for all $c \in C$, $A \in \density^*(\tthree {x^2-c})$;
\item for all $c \in \N \setminus C$ not a square, $A \not\in \density(\tthree {x^2-c})$.
\end{enumerate}
\end{theorem}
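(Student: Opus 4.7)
The plan is to reduce the statement to a strengthened version of Theorem \ref{thm:thickbutwithoutdensity} via complementation. First observe that the desired conclusion is equivalent to the existence of a set $B \subseteq \Ztz$ satisfying $B \in \density^*(\tthree{x^2-c'})$ for every $c' \in \N \setminus C$ not a square, and $B \not\in \density(\tthree{x^2-c})$ for every $c \in C$. Indeed, given such $B$, set $A := \Ztz \setminus B$; then $A^c = B$ has zero density with respect to $\tthree{x^2-c}$ for $c \in C$ (giving part (I)), while $A = \Ztz \setminus B$ has zero density with respect to $\tthree{x^2-c'}$ for $c' \in \N \setminus C$ not a square (since $B \in \density^*(\tthree{x^2-c'})$ means its complement is $\density(\tthree{x^2-c'})$-null), giving part (II).

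The construction of $B$ adapts the inductive scheme in the proof of Theorem \ref{thm:thickbutwithoutdensity}, leveraging the hyperbolic structure of $\rthree{x^2-c}$ available for positive non-square $c$. For each such $c$, Pell's equation produces a fundamental unit $u_c \in \Ztz$ of $\Z[\sqrt c]$; replacing $u_c$ by a suitable power, one may assume $\rthree{x^2-c}(u_c)$ has two distinct positive real eigenvalues $\lambda_c > 1 > \lambda_c^{-1}$, with eigenlines $L^\pm_c = \R(1,\pm 1/\sqrt c)$. In logarithmic coordinates $\psi_c: \Ztz \to \R^2$, $\psi_c(x) = (\log|x_1+x_2\sqrt c|, \log|x_1-x_2\sqrt c|)$, the multiplication $\tthree{x^2-c}$ becomes translation by $\psi_c(y)$, so F\o{}lner sequences correspond to large rectangles. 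Crucially, multiplication by $u_c^n$ is translation by $n\log\lambda_c \cdot (1,-1)$, so iterating contracts along $L^-_c$ and expands along $L^+_c$ at geometric rates.

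Enumerate $\{c'_k\}_{k \in \N} \subseteq \N \setminus C$ non-square, and build a chain $\finitediscrims_1 \subseteq \finitediscrims_2 \subseteq \cdots \subseteq \discrims_2$ of finite subsets exhausting $\{\tthree{x^2-c'_k}\}_{k \in \N}$. Let $(\tthree{x^2-c'_{n}})_{n \in \N}$ be a sequence visiting each $c'_k$ infinitely often. At stage $n$, instead of dilating a cube as in Theorem \ref{thm:thickbutwithoutdensity}, choose a large $\psi_{c'_n}$-rectangle $R_n$ (a F\o{}lner set for $\tthree{x^2-c'_n}$) and set $H_n := \psi_{c'_n}^{-1}(R_n) \tthree{x^2-c'_n} x_n$ where $x_n \in \Ztz$ is chosen, using Lemma \ref{lem:thereareavoidingsets} adapted with the hyperbolic scales $\lambda_{c'_n}^{k_n}$, so that $H_n$ is $(\cube n, \finitediscrims_n)$-avoiding and is placed at a scale sufficiently separated from previous $H_j$'s (by factors of $\lambda_{c'_n}^{k_n}$). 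Setting $B := \bigcup_n H_n$, the avoidance condition gives property (b) (density zero with respect to $\tthree{x^2-c}$ for $c \in C$) by the argument in the proof of Theorem \ref{thm:thickbutwithoutdensity}. For property (a), the key refinement is that each $H_n$ now represents a ``thick block'' of $\tthree{x^2-c'_n}$-F\o{}lner content; with the hyperbolic scaling allowing arbitrarily large F\o{}lner rectangles, the union $B$ meets every $\tthree{x^2-c'_k}$-invariant mean with mass $1$, yielding $B \in \density^*(\tthree{x^2-c'_k})$.

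The main obstacle is upgrading thickness to $\density^*$-largeness: in the proof of Theorem \ref{thm:thickbutwithoutdensity}, a cube suffices to witness thickness with respect to $\tthree{x^2-c'_n}$, but density* requires that \emph{every} finite $F$ and \emph{every} translate $F \tthree{x^2-c'_n} z$ is mostly contained in $B$. This is possible because the hyperbolic eigenvalues $\lambda_{c'_n} > 1 > \lambda_{c'_n}^{-1}$ (unique to positive non-square $c'_n$) allow the $H_n$'s to be taken as genuine F\o{}lner rectangles rather than mere cubes; moreover, the rapid scale separation via the $\lambda_{c'_n}^{k_n}$ keeps the blocks ``independent'' in the sense needed both for the avoidance argument of (b) and for the mean-theoretic argument of (a).
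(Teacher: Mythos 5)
Your complementation reduction is correct, though it is merely a relabeling (replacing $C$ by the set of positive non-squares not in $C$), so the reduced statement is isomorphic to the original and no actual simplification has occurred. The real issue is in the construction of $B$, where there is a genuine gap that the proof does not close.

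The core difficulty is that a union of increasingly separated blocks can never lie in $\density^*$. Recall that $B \in \density^*(\ttwo)$ is equivalent to $\dstar_\ttwo(\Ztz \setminus B) = 0$, i.e.\ \emph{every} translation-invariant mean assigns $B$ full mass; equivalently, by Definition \ref{def:density}, for every $\eps>0$ there is a single finite $F$ such that $|(F \ttwo z)\cap B^c| < \eps|F|$ for \emph{all} $z \in \Ztz$. By contrast, thickness only requires that for each $F$ there is \emph{some} $z$ with $F\ttwo z \subseteq B$. Your construction sets $B = \bigcup_n H_n$ with the $H_n$ placed at rapidly increasing Euclidean scales (you need $\normmin{H_n} > n\,\normmax{H_{n-1}}$ or something comparable to run the avoiding argument for part (b), exactly as in the proof of Theorem \ref{thm:thickbutwithoutdensity}). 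But this forces the multiplicative width of the gaps between consecutive blocks to tend to infinity. Since multiplication by a fixed $y$ distorts Euclidean norms by a bounded factor (Lemma \ref{lem:asymptoticnormbehavior}), any fixed finite $F$ has translates $F \ttwo z$ whose Euclidean norms span only a bounded multiplicative range; hence for every fixed $F$ one can choose $z$ so that $F \ttwo z$ sits entirely inside a gap, giving $|(F\ttwo z)\cap B^c| = |F|$. So $\dstar_\ttwo(B^c) = 1$, not $0$, and $B$ is thick but not $\density^*$. The sentence asserting that $B$ ``meets every $\tthree{x^2-c'_k}$-invariant mean with mass $1$'' is precisely where the thick/$\density^*$ distinction is elided; as Figure \ref{fig:containmentdiagram} records, $\density^* \subsetneq \pws^* \subsetneq \thick$, and your construction only certifies membership in the outermost class.

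Put differently, your construction reproves, in this special case, exactly what Theorem \ref{thm:thickbutwithoutdensity} already gives (thick for the chosen multiplications, zero density for the others); the whole content of the present theorem is the upgrade from $\thick$ to $\density^*$, and that upgrade is not achieved by a union of well-separated blocks. The hyperbolic matrices $\rthree{x^2-c}(u_c)$ for positive non-square $c$ do need to be exploited, but in a way that produces a set which is simultaneously dense (in the $\density^*$ sense) with respect to each $\tthree{x^2-c}$, $c\in C$, at \emph{all} scales -- not just on a sparse sequence of annuli -- while still having zero density for the remaining multiplications. A union of isolated blocks cannot do both: the separation needed for the avoiding/zero-density argument directly kills the $\density^*$ property.
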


Whether such an extension of Theorem \ref{thm:thickbutwithoutdensity} is always possible remains unknown. We were unable to prove or disprove, for example, the analogous result when $c$ is allowed to be negative. Concretely: Is there a set which is $\density^*$ with respect to the multiplication arising from $\Z[i]$ but of zero density with respect to the multiplication arising from $\Z[\sqrt{-2}]$?

The technique used in the proof of Theorem \ref{thm:thickbutwithoutdensity} can be used to improve \cite[Theorem 3.6]{berghind-onipsets} to this setting. The following theorem shows that \adive{} $\ip^*$ sets need not be as multiplicatively large as what is guaranteed by Theorem \ref{thm:thmfrombgotherpaper} for \adive{} $\ipr^*$ sets.

\begin{theorem}\label{thm:addipdoesnotimplymultsyndetic}
There exists a set $A \subseteq \Zd$ which is \adly{} $\ip^*$ but for which $A \setminus \{0\}$ is not multiplicatively syndetic with respect to any proper multiplication on $\Zd$.
\end{theorem}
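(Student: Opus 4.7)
The plan is to construct $A = \Z^d \setminus B$ for a set $B \subseteq \Zdz$ that is simultaneously multiplicatively thick with respect to every $\tone \in \discrims$ (so for each finite $F \subseteq \Zdz$ there is an $x$ with $F \tone x \subseteq B$, whence $A \setminus \{0\}$ fails multiplicative syndeticity with respect to every proper multiplication) and contains no additive FS-set (so $A$ is additively $\ipclass^*$). Following the strategy in the proof of Theorem \ref{thm:thickbutwithoutdensity}, I enumerate pairs $(\ttwo_n, G_n)_{n \in \N}$ in $\discrims \times \finitesubsets \Zdz$ so that each pair appears infinitely often, and set $H_n := G_n \ttwo_n x_n$, inductively choosing $x_n \in \Zdz$.

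The three inductive requirements on $x_n$ are: (i) $\min_{h \in H_n}\euclid{h} > 2 M_n$, where $M_n := \max_{m<n}\max_{h\in H_m}\euclid{h}$ (strong shell separation); (ii) $\euclid{x_n} > K_n M_n$, where $K_n := \max\{K(f-f',\ttwo_n) : f,f'\in G_n,\ f\neq f'\}$ and $K$ is as in Lemma \ref{lem:asymptoticnormbehavior}, so that any non-trivial $(f-f')\ttwo_n x_n$ is too large to lie in a prior shell; and (iii) $H_n \cap \{Na : a \in \bigcup_{m<n} H_m,\ N \geq 2\} = \emptyset$, forbidding elements of $H_n$ from being integer dilates (by $\geq 2$) of earlier elements. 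Conditions (i) and (ii) merely require $\euclid{x_n}$ to be large enough, which is possible via Lemma \ref{lem:reachestheid}; for (iii), only finitely many $Na$ can possibly lie in the norm range of $H_n$, and each excludes a single value in $\Qd$ per $g \in G_n$ from candidacy for $x_n$.

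Set $B = \bigcup_n H_n$. Multiplicative thickness is immediate from the enumeration. To show $B$ contains no additive FS-set, suppose for contradiction that $\text{FS}((y_k)_{k\in\N}) \subseteq B$ and write $y_k \in H_{n(k)}$. If the sequence $(y_k)$ takes infinitely many distinct values, pass to a subsequence of distinct values and then (each $H_n$ being finite) to a further subsequence with $n(k)$ strictly increasing. For $k_1 < k_2$, condition (i) and its analogue at stage $n(k_2)+1$ force $y_{k_1} + y_{k_2} \in H_{n(k_2)}$; writing $y_{k_1} + y_{k_2} = f \ttwo_{n(k_2)} x_{n(k_2)}$ and $y_{k_2} = f' \ttwo_{n(k_2)} x_{n(k_2)}$ with $f, f' \in G_{n(k_2)}$ and subtracting yields $y_{k_1} = (f-f')\ttwo_{n(k_2)} x_{n(k_2)}$. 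The case $f = f'$ forces $y_{k_1} = 0$, contradicting $y_{k_1} \in B \subseteq \Zdz$; the case $f \neq f'$ gives $\euclid{y_{k_1}} \geq K_{n(k_2)}^{-1} \euclid{x_{n(k_2)}} > M_{n(k_2)}$ by Lemma \ref{lem:asymptoticnormbehavior} and (ii), contradicting $\euclid{y_{k_1}} \leq M_{n(k_2)}$. If instead $(y_k)$ takes only finitely many values, then some $a \in \Zdz$ repeats infinitely often, so $\{Na : N \in \N\} \subseteq B$; writing $a \in H_{n_0}$, condition (i) excludes $Na \in H_k$ for $k < n_0$ by norm, finiteness of $G_{n_0}$ limits $Na \in H_{n_0}$ to finitely many $N$, and (iii) excludes $Na \in H_k$ for $k > n_0$ and $N \geq 2$, contradicting the infinitude of $\{Na : N \in \N\}$.

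The main obstacle is the second case, where the FS-witnessing sequence takes only finitely many values: the two-sum norm argument used in the first case (and in the proof of Theorem \ref{thm:thickbutwithoutdensity}) degenerates because the two summands lie in the same norm shell. Condition (iii) of the inductive construction is introduced precisely to rule this out by preventing any nonzero integer dilate of a $B$-element from later reappearing in $B$.
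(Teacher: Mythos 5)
Your construction follows the same overall strategy as the paper — inductively building a multiplicatively thick set $B$ via Lemma \ref{lem:asymptoticnormbehavior} with a shell-separation condition and taking $A = \Zd \setminus B$ — but the way you rule out additive FS-sets inside $B$ is more complicated than it needs to be, and one piece of the construction is not fully justified.

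First, condition (iii) is not needed. Conditions (i) and (ii) alone already imply the paper's criterion $\big|B \cap (B-x)\big| < \infty$ for all $x \in \Zdz$, which kills both of your cases at once: if $\text{FS}(y_n)_{n \geq 1} \subseteq B$, then $\text{FS}(y_n)_{n \geq 2} \subseteq B \cap (B - y_1)$ is infinite, a contradiction (this handles the repeated-value case with $y_1 = a$ just as well as the distinct-value case). To see the criterion holds: suppose $h \in H_n$ and $h + x \in H_m$. If $n = m$, then $x$ is a nonzero $\ttwo_n$-difference of two elements of $G_n$ applied to $x_n$, so by (ii) and Lemma \ref{lem:asymptoticnormbehavior}, $\euclid{x} > M_n$. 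If $n \neq m$, the shell separation (i) gives $\euclid{x} \geq \normmin{H_{\max(n,m)}} - \normmax{H_{\min(n,m)}} > M_{\max(n,m)}$. Since $M_n$ at least doubles at each step, $\max(n,m)$ is bounded in terms of $\euclid{x}$, so only finitely many pairs $(n,m)$ contribute and each $H_n$ is finite. Your Case 1 argument is essentially the $n=m$ subcase of this in disguise, and your Case 2 is handled for free.

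Second, the verification that a valid $x_n$ satisfying (iii) actually exists is imprecise. The phrase ``only finitely many $Na$ can possibly lie in the norm range of $H_n$'' is circular, since the norm range of $H_n$ depends on $x_n$; one must first fix a bounded search window for $\euclid{x_n}$. Even granting that, for $d = 1$ the excluded points for a fixed $(g,a)$ form an arithmetic progression, and when $a$ divides $\rho_{\ttwo_n}(g)$ (which is not ruled out by your conditions) that progression can be an entire tail of $\Z_{>0}$ or $\Z_{<0}$. Controlling the total density of these exclusions would require a further constraint relating $\normmin{H_m}$ to $|G_m|$ and the future $G_n$'s that you have not imposed. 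Since (iii) is unnecessary anyway, the cleanest fix is to drop it and argue as above. (The paper sidesteps the whole issue by taking $G_n = \cube{n}$ and imposing $\normmin{\cube{2n} \tone_n x_n} > n$, which plays the role of your condition (ii) because $\cube n - \cube n \subseteq \cube{2n} \cup \{0\}$.)
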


\begin{proof}
Let $(\tone_n)_{n \in \N}$ be a sequence in $\discrims$ which visits every element of $\discrims$ infinitely often. Using Lemma \ref{lem:asymptoticnormbehavior}, choose inductively $x_1, x_2, \ldots \in \Zdz$ so that $\normmin{\cube {2n} \tone_n x_n} > n$ and the set $H_n = \cube n \tone_n x_n$ satisfies $\normmin {H_{n}} > 2 \normmax{H_{n-1}}$. Put $B = \cup_n H_n$ and $A = \Zd \setminus B$.

By construction, the set $B$ is multiplicatively thick with respect to all multiplications in $\discrims$, so $A \setminus \{0\} = \Zdz \setminus B$ is not multiplicatively syndetic with respect to any multiplication in $\discrims$. To prove that $A$ is additively $\ip^*$, we need only to show that $B$ is not additively $\ip$.

To prove that $B$ is not an additive $\ip$ set, it suffices to show that for all $x \in \Zdz$, $\big| B \cap (B-x) \big| < \infty$. Let $x \in \Zdz$, and choose $N \in \N$ such that $N \geq \euclid{x}$ and $\normmin{H_N} > \euclid{x}$. To prove that $\big| B \cap (B-x) \big| < \infty$, it suffices to prove that for all $n, m \in \N$, if $n \neq m$ or $n \geq N$, then the set $H_n \cap (H_m - x)$ is empty.

Let $n, m \in \N$ and suppose $H_n \cap (H_m - x)$ is non-empty. This means
\[\normmin{H_n} \leq \normmax{H_m - x} \leq 2\normmax{H_m} < \normmin{H_{m+1}}.\]
It follows that $n < m+1$. By a similar argument, $m < n+1$. Therefore, $n=m$. Since $H_n \cap (H_n - x)$ is non-empty, $x$ is an element of the difference set $H_n - H_n = (\cube n - \cube n) \tone_n x_n$. Since $x \neq 0$, $x \in \cube {2n} \tone_n x_n$, whereby $n < \normmin{ \cube {2n} \tone_n x_n} \leq \euclid{x}$. Since $\euclid{x} \leq N$, this proves that $n < N$.
\end{proof}

While \adive{} $\ip^*$ sets need not be multiplicatively syndetic, they are multiplicatively thick with respect to all proper multiplications; this follows from the dual statement to \cite[Theorem 6.2]{BGpaperone}.

\section{Proof of Theorem B}\label{sec:maintheoremtwo}  

We will prove Theorem B in two parts, beginning with the ``if'' direction. Let $\tone, \ttwo \in \discrims$. If $\tone=\ttwo$, then all of the corresponding classes of largeness coincide. If $\tone = \ttwoop$, then for all $r \in \N$ and $(x_n)_{n=1}^r \subseteq \Zdz$,
\begin{align}\label{eqn:reversingtrick}\finiteproducts_\ttwo (x_1, x_2, \ldots, x_r) = \finiteproducts_{\tone} (x_r, x_{r-1}, \ldots, x_1).\end{align}
This means that for all $\arbclass \in \{\iprclass,\ipnaughtclass, \iprclass^*, \ipnaughtclass^*\}$, $\arbclass(\tone) = \arbclass(\ttwo)$, proving the ``if'' direction in Theorem B from the introduction.

Next we prove that for $r \geq 2$, the class of multiplicative $\ipr$ sets determines the multiplication up to the opposite operation. We accomplish this by assuming that $\ttwo \not\in \{\tone, \toneop\}$ and constructing a set which is IP with respect to $\tone$ but which contains no solutions to the equation $x \ttwo y = z$.

We will make use of the following notation: for non-empty $\al = \{\al_1 < \cdots < \al_k\} \in \finitesubsets \N$,
\[x_\al = x_{\al_1} \tone \cdots \tone x_{\al_k}.\]
The multiplication $\tone$ is suppressed in this notation. Even though we allow $\emptyset \in \finitesubsets \N$, because $(\Zd,+,\tone)$ may not have a multiplicative identity, we leave the symbol $x_\emptyset$ undefined.

\begin{theorem}\label{thm:ipdeterminesmult}
Let $\tone \in \discrims$, and suppose $\ttwo \in \discrims \setminus \{\tone, \toneop\}$. There exists $A \subseteq \Zdz$ which is IP with respect to $\tone$ but not IP$_2$ with respect to $\ttwo$.
\end{theorem}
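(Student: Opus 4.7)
The plan is to construct $(x_n)_{n \geq 1} \subseteq \Zdz$ inductively so that $A = \finiteproducts_\tone((x_n)_{n \in \N})$ satisfies $(A \ttwo A) \cap A = \emptyset$; this is precisely the failure of $A$ to be IP$_2$ with respect to $\ttwo$. At step $n$, every new element of $\finiteproducts_\tone(x_1, \ldots, x_n)$ has the form $M_\alpha x_n$ with $M_\alpha \in \rone(\Qdz) \cup \{\idd\} \subseteq \gldq$, and the identity $u \ttwo v = \rtwo(u)v = \rtworight(v)u$ turns every forbidden equation $x_\alpha \ttwo x_\beta = x_\gamma$ (for non-empty $\alpha, \beta, \gamma \subseteq \{1, \ldots, n\}$) into a polynomial equation in $x_n$. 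I organize the analysis by which of $\alpha, \beta, \gamma$ contain $n$.

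Three of the four possibilities are easy: (a) if none contains $n$, the inductive hypothesis applies; (b) if exactly one contains $n$, the equation is linear in $x_n$ with an invertible leading matrix, pinning $x_n$ to a single forbidden value; (c) if $n \in \alpha \cap \beta$ (regardless of $\gamma$), then Lemma \ref{lem:asymptoticnormbehavior} together with the division-algebra structure of $(\Qd,+,\ttwo)$ yields $\euclid{u \ttwo v} \asymp \euclid{u}\euclid{v}$ for $u,v \in \Qdz$, so the left-hand side has norm $\asymp \euclid{x_n}^2$ while the right-hand side grows at most linearly in $\euclid{x_n}$, and choosing $\euclid{x_n}$ large settles the case. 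The delicate case (d) is $n \in \alpha \cap \gamma$ with $n \notin \beta$ (and its mirror image). Both sides are now linear in $x_n$, and the equation reduces to $T_{\alpha,\beta,\gamma}\, x_n = 0$ with
\[ T_{\alpha,\beta,\gamma} = \rtworight(x_\beta) \rone(x_{\alpha \setminus \{n\}}) - \rone(x_{\gamma \setminus \{n\}}) \in \matdq \]
(using the convention $\rone(x_\emptyset) = \idd$). Whenever $T_{\alpha,\beta,\gamma} \neq 0$, the forbidden $x_n$ lie in a proper linear subspace of $\Qd$, which does not exhaust $\Zdz$.

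The main obstacle is to guarantee $T_{\alpha,\beta,\gamma} \neq 0$ for every relevant triple at every stage. A vanishing $T_{\alpha,\beta,\gamma}$ forces $\rtworight(x_\beta)$ to lie in $\rone(\Qd)$ and to equal a specified $\rone(z)$. By Lemma \ref{lem:equivconditionsforaligned} applied to $\tone$ and $\ttwoop$, either $\rone(\Qd) \neq \rtworight(\Qd)$, in which case $\rtworight(x_\beta) \in \rone(\Qd)$ already confines $x_\beta$ to the proper subspace $\rtworight^{-1}(\rone(\Qd) \cap \rtworight(\Qd))$ of $\Qd$; or else $\rone(\Qd) = \rtworight(\Qd)$, and the invertible $\Q$-linear map $\phi := \rone^{-1} \circ \rtworight$ is not the identity (because $\tone \neq \ttwoop$ forces $\rtworight \neq \rone$), so the offending identity becomes the non-trivial algebraic equation $\phi(x_\beta) \tone x_{\alpha \setminus \{n\}} = x_{\gamma \setminus \{n\}}$ among already-chosen elements. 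I plan to absorb these non-degeneracies into the induction: at every step $m$, in addition to the finitely many forbidden values from (b) and the large-norm requirement from (c), I will demand that $x_m$ avoid finitely many further proper algebraic subvarieties ensuring every future matrix $T_{\alpha,\beta,\gamma}$ involving $x_m$ is non-zero. Since a finite union of proper subvarieties of $\Qd$ cannot cover $\Zdz$ and misses vectors of arbitrarily large Euclidean norm, each induction step succeeds. I expect the main technical difficulty to lie in this bookkeeping — identifying which non-degeneracy conditions must be propagated back to earlier steps — rather than in any single algebraic identity or hard estimate.
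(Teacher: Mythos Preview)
Your overall plan---inductively choosing $x_n$ so that no equation $x_\alpha \ttwo x_\beta = x_\gamma$ holds---matches the paper's approach, and your cases (a), (b), and (d) are sound; the bookkeeping you anticipate for (d) is precisely what the paper formalizes with its auxiliary families $F_{\text{r}}(\alpha,\beta,\gamma)$ and $G_{\text{r}}(\alpha)$ (and their un-subscripted counterparts for the mirror case).

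However, your argument for case (c) rests on a false estimate. The claim $\euclid{u \ttwo v} \asymp \euclid{u}\,\euclid{v}$ fails whenever the extension of $\ttwo$ to $\R^d$ acquires zero divisors---which happens, for instance, whenever $\ttwo$ comes from a real quadratic ring. Concretely, for $\ttwo$ induced by $\Z[\sqrt{3}]$ one has $(2,1)\ttwo(2,-1)=(1,0)$, and the $\ttwo$-product of the $k$-th powers of $2+\sqrt{3}$ and $2-\sqrt{3}$ is identically $(1,0)$ while both factors have Euclidean norm tending to infinity. Lemma~\ref{lem:asymptoticnormbehavior} only gives $\euclid{u \ttwo v} \asymp_v \euclid{u}$ with the implicit constant depending on $v$; it does not yield a uniform multiplicative lower bound. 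Hence you cannot conclude $\euclid{(M x_n)\ttwo(N x_n)} \gtrsim \euclid{x_n}^2$, and ``choosing $\euclid{x_n}$ large'' does not by itself rule out the equations in case~(c).

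The paper handles case (c)---and in fact case (b) as well---by a scaling argument rather than a norm argument: when $n \in \alpha \cap \beta$, the equation $x_\alpha \ttwo x_\beta = x_\gamma$ is \emph{inhomogeneous} in $x_n$ in the sense that under $x_n \mapsto c\,x_n$ the left side scales by $c^2$ and the right by $c^0$ or $c^1$, so any such equation becomes false for all but finitely many $c \in \N$. You could alternatively fold case (c) into your subvariety-avoidance framework, since $Q(x_n):=(M x_n)\ttwo(N x_n)$ is a non-identically-zero homogeneous quadratic map and $Q(x_n)=x_\gamma$ cuts out a proper subvariety; but the norm argument as written does not go through.
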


\begin{proof}
We will construct a sequence $(x_n)_{n \in \N} \subseteq \Zdz$ by induction such that for all non-empty $\al, \be, \ga \in \finitesubsets \N$, the equation
\[E(\al,\be,\ga): \ x_\al \ttwo x_\be = x_\ga\]
is false. The set $A = \finiteproducts_\tone(x_n)_{n \in \N}$ will then satisfy the conclusions of the theorem.

To construct such a sequence, we must also consider the equations
\begin{align*}
F(\al,\be,\ga): \ &\rtwo(x_\al) \rone(x_\be) = \rone(x_\ga), & G(\al): \ &\rtwo = \rone(x_\al) \circ \rone,\\
F_{\text{r}}(\al,\be,\ga): \ &\rtworight(x_\al) \rone(x_\be) = \rone(x_\ga), & G_{\text{r}}(\al): \ &\rtworight = \rone(x_\al) \circ \rone,
\end{align*}
where $\rone$, $\rtwo$ are the representations of $\tone$, $\ttwo$, respectively, and $\rone(x_\emptyset)$ and $\rtwo(x_\emptyset)$ stand for the identity matrix $\idd$.

Call $E(\al,\be,\ga)$, $F(\al,\be,\ga)$, and $F_{\text{r}}(\al,\be,\ga)$ \emph{homogeneous in $x_n$} if $n \not\in \al \cup \be \cup \ga$ or $n \in (\al \cap \ga) \bigtriangleup (\be \cap \ga)$; call $G(\al)$ and $G_{\text{r}}(\al)$ \emph{homogeneous in $x_n$} if $n \not\in \al$. By linearity, the truth of equations which are homogeneous in $x_n$ remains invariant under the transformation $x_n \mapsto c x_n$ for $c \in \N$. In contrast, given a finite collection of equations which are not homogeneous in $x_n$ and whose truths have been determined, there exists $c \in \N$ for which all equations in the collection become false when $x_n$ is replaced by $cx_n$.

We proceed now to construct $(x_n)_{n \in \N} \subseteq \Zdz$ inductively so that for all $n \in \N$, the following statements hold:
\begin{align*}
\condone_n: \ & \text{for all non-empty $\al, \be, \ga \in \finitesubsets {\{1, \ldots, n\}}$, $E(\al,\be,\ga)$ is false;}\\
\condtwo_n: \ & \text{for all $\al, \be, \ga \in \finitesubsets {\{1, \ldots, n\}}$ with $\al \neq \emptyset$, $F(\al,\be,\ga)$ and $F_{\text{r}}(\al,\be,\ga)$ are false;}\\
\condthree_n: \ & \text{for all $\al \in \finitesubsets {\{1, \ldots, n\}}$, $G(\al)$ and $G_{\text{r}}(\al)$ are false.}
\end{align*}

\textbf{Base case:} By the comments above, it suffices to find $x_1$ satisfying the statements involving equations which are homogeneous in $x_1$. Thus, from $\condtwo_1$, we need $x_1$ to satisfy $\rtwo(x_1) \neq \rone(x_1)$ and $\rtworight(x_1) \neq \rone(x_1)$, and from $\condthree_1$, we need $\rone \not\in \{\rtwo, \rtworight\}$. The latter follows from our assumption that $\ttwo \not\in \{\tone, \toneop\}$, while the former is satisfied by any $x_1$ in the non-empty set $\Zdz \setminus \big(\nullspace (\rtwo-\rone) \cup \nullspace (\rtworight-\rone)\big)$.

\textbf{Inductive step:} Suppose $x_1, \ldots, x_{n-1} \in \Zdz$ have been chosen so that $\condone_{n-1}$, $\condtwo_{n-1}$, and $\condthree_{n-1}$ hold. Let
\begin{align*}
\nullcoll_1 &= \bigcup_{\substack{\al, \be, \ga \\ \al \neq \emptyset}} \Big( \nullspace \big( \rtwo(x_\al) \rone(x_\be) - \rone(x_\ga) \big) \cup \nullspace \big( \rtworight(x_\al) \rone(x_\be) - \rone(x_\ga) \big) \Big),\\
\nullcoll_2 &= \bigcup_{\al, \be, \ga} \Big( \nullspace \big( z \mapsto \rtwo (\rone(x_\al) z)\rone(x_\be) - \rone(x_\ga)\rone(z) \big)\\
& \qquad \qquad \qquad \qquad \quad \cup \nullspace \big( z \mapsto \rtworight (\rone(x_\al) z)\rone(x_\be) - \rone(x_\ga)\rone(z) \big) \Big),
\end{align*}
where each of the unions is over $\al, \be, \ga \in \finitesubsets {\{1, \ldots, n-1\}}$. Put $\nullcoll = \nullcoll_1 \cup \nullcoll_2$. We will show that $\Zdz \setminus \nullcoll$ is non-empty and that any $x_n \in \Zdz \setminus \nullcoll$ satisfies the statements in $\condone_{n}$, $\condtwo_{n}$, and $\condthree_{n}$ involving equations homogeneous in $x_n$. As discussed above, by replacing $x_n$ with $c x_n$ for some $c \in \N$, this suffices to complete the induction.

To see that $\Zdz \setminus \nullcoll$ is non-empty, it suffices to show that each of the $\Z$-linear transformations involved in the definitions of $\nullcoll_1$ and $\nullcoll_2$ is not identically zero. For $\nullcoll_1$, since $\al, \be, \ga \in \finitesubsets {\{1, \ldots, n-1\}}$ with $\al \neq \emptyset$, it follows immediately from $\condtwo_{n-1}$ that $\rtwo(x_\al) \rone(x_\be) - \rone(x_\ga) \neq 0$ and $\rtworight(x_\al) \rone(x_\be) - \rone(x_\ga) \neq 0$.

For $\al, \be, \ga \in \finitesubsets {\{1, \ldots, n-1\}}$, let $\newtran(z) = \rtwo (\rone(x_\al) z)\rone(x_\be) - \rone(x_\ga)\rone(z)$, one of the transformations appearing in $\nullcoll_2$. We wish to show that $\newtran$ is non-zero as a $\Z$-linear transformation from $\Zd$ to $\matdz$. Consider the following cases.
\begin{enumerate}[label=(\Roman*)]
\item $\al \neq \emptyset$. Using Lemma \ref{lem:reachestheid}, there exist $c \in \N$ and $w \in \Zdz$ such that $\rone(w) = \roneright(w) = c \idd$. Note that $\newtran(w) = c \big( \rtwo(x_\al) \rone(x_\be) - \rone(x_\ga) \big) \neq 0$ by $\condtwo_{n-1}$, meaning $\sigma$ is not identically zero.
\item $\al = \emptyset$, $\be \neq \emptyset$. Note that $\newtran(x_\be) = \big(\rtwo (x_\be) - \rone(x_\ga) \big) \rone(x_\be) \neq 0$ by $\condtwo_{n-1}$, meaning $\sigma$ is not identically zero.
\item $\al = \be = \emptyset$, $\ga \neq \emptyset$. In this case, $\newtran(z) = \rtwo(z) - \rone(x_\ga)\rone(z)$ is not identically zero by $\condthree_{n-1}$.
\item $\al = \be = \ga = \emptyset$. In this case, $\sigma(z) = \rtwo(z) - \rone(z)$. This is not identically zero since $\rtwo \neq \rone$ by assumption.
\end{enumerate}
It can be shown in the same way that the other $\Z$-linear transformations in $\nullcoll_2$ are not identically zero. This shows that $\nullcoll$ is a finite collection of proper linear subspaces of $\Zd$, whereby $\Zdz \setminus \nullcoll$ is non-empty.

Let $x_n \in \Zdz \setminus \nullcoll$. We wish to show that the statements in $\condone_{n}$, $\condtwo_{n}$, and $\condthree_{n}$ involving equations homogeneous in $x_n$ hold. There are three cases to consider for an equation $E(\al,\be,\ga)$ from $\condone_n$ which is homogeneous in $x_n$:
\begin{enumerate}[label=(\Roman*)]
\item $n \not\in \al\cup\be\cup\ga$. $E(\al,\be,\ga)$ is false by $\condone_{n-1}$.
\item $n \in \al \cap \ga$, $n \not\in \be$. $E(\al,\be,\ga)$ can be written as $\rtworight(x_\be) \rone(x_{\al_0})x_n = \rone(x_{\ga_0})x_n$, where $\al_0, \be, \ga_0 \in \finitesubsets {\{1, \ldots, n-1\}}$ and $\be \neq \emptyset$. Since $x_n \not\in \nullcoll_1$, $E(\al,\be,\ga)$ is false.
\item $n \in \be \cap \ga$, $n \not\in \al$. $E(\al,\be,\ga)$ can be written as $\rtwo(x_\al) \rone(x_{\be_0})x_n = \rone(x_{\ga_0})x_n$, where $\al, \be_0, \ga_0 \in \finitesubsets {\{1, \ldots, n-1\}}$ and $\al \neq \emptyset$. Since $x_n \not\in \nullcoll_1$, $E(\al,\be,\ga)$ is false.
\end{enumerate}
This shows that the statements in $\condone_{n}$ involving equations homogeneous in $x_n$ hold.

There are three cases to consider for an equation $F(\al,\be,\ga)$ from $\condtwo_n$ which is homogeneous in $x_n$:
\begin{enumerate}[label=(\Roman*)]
\item $n \not\in \al\cup\be\cup\ga$. $F(\al,\be,\ga)$ is false by $\condtwo_{n-1}$.
\item $n \in \al \cap \ga$, $n \not\in \be$. $F(\al,\be,\ga)$ can be written as $\rtwo(\rone(x_{\al_0})x_n) \rone(x_{\be}) = \rone(x_{\ga_0})\rone(x_n)$, where $\al_0, \be, \ga_0 \in \finitesubsets {\{1, \ldots, n-1\}}$. Since $x_n \not\in \nullcoll_2$, $F(\al,\be,\ga)$ is false.
\item $n \in \be \cap \ga$, $n \not\in \al$. $F(\al,\be,\ga)$ can be written as $\rtwo(x_{\al}) \rone(x_{\be_0})\rone(x_n) = \rone(x_{\ga_0})\rone(x_n)$, where $\al, \be_0, \ga_0 \in \finitesubsets {\{1, \ldots, n-1\}}$ and $\al \neq \emptyset$. Since $\rone(x_n) \neq 0$, $F(\al,\be,\ga)$ is false by $\condtwo_{n-1}$.
\end{enumerate}
It can be shown in the same way that the equations $F_{\text{r}}(\al,\be,\ga)$ from $\condtwo_{n}$ which are homogeneous in $x_n$ are false.  This shows that the statements in $\condtwo_{n}$ involving equations homogeneous in $x_n$ hold.

Consider an equation $G(\al)$ from $\condthree_n$ which is homogeneous in $x_n$. It must be that $n \not\in \al$, and so $G(\al)$ is false by $\condthree_{n-1}$. This shows that the statements in $\condthree_{n}$ involving equations homogeneous in $x_n$ hold.

This completes the proof of the inductive step and the proof of the theorem.
\end{proof}

The following theorem shows that the ``reversal'' trick in (\ref{eqn:reversingtrick}) does not work for finite product sets with infinitely many generators. We need some more notation: for non-empty $\al, \be \in \finitesubsets \N$, we write $\al < \be$ to mean that $\max \al < \min \be$.

\begin{theorem}
Suppose $\tone \in \discrims$ is non-commutative, that is, $\tone \neq \toneop$. There exists a set which is IP with respect to $\tone$ but not IP with respect to $\toneop$.
\end{theorem}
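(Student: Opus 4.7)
The plan is to construct, by induction on $n$, a sequence $(x_n)_{n \in \N} \subseteq \Zdz$ such that $A = \finiteproducts_\tone(x_n)_{n \in \N}$ will contain no $\finiteproducts_{\toneop}(y_n)_{n \in \N}$. Writing $x_\alpha = x_{a_1} \tone \cdots \tone x_{a_k}$ for non-empty $\alpha = \{a_1 < \cdots < a_k\} \in \finitesubsets \N$, I will arrange that
\begin{enumerate}
\item[(*)] for all non-empty $\alpha, \beta, \gamma \in \finitesubsets \N$ with $\max \alpha \geq \min \beta$, one has $x_\alpha \tone x_\beta \neq x_\gamma$;
\item[(**)] for distinct non-empty $\alpha, \beta \in \finitesubsets \N$, $x_\alpha \neq x_\beta$.
\end{enumerate}

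Given such a sequence, the theorem will follow quickly. Suppose, for contradiction, that $A \supseteq \finiteproducts_{\toneop}(y_n)_{n \in \N}$ for some $(y_n)$. Since each $y_n \in A$, property (**) yields a unique non-empty $\alpha_n \in \finitesubsets \N$ with $y_n = x_{\alpha_n}$. For every $i < j$, the element $y_i \toneop y_j = y_j \tone y_i = x_{\alpha_j} \tone x_{\alpha_i}$ lies in $A$, hence equals $x_\gamma$ for some non-empty $\gamma$. Property (*) then forces $\max \alpha_j < \min \alpha_i \leq \max \alpha_i$, and so $(\max \alpha_n)_{n \in \N}$ is a strictly decreasing sequence of positive integers, a contradiction.

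For the inductive construction I will follow the template of the proof of Theorem \ref{thm:ipdeterminesmult}. At stage $n$, having chosen $x_1, \ldots, x_{n-1}$ satisfying the restrictions of (*) and (**) to $\{1, \ldots, n-1\}$, I must select $x_n \in \Zdz$ so as to extend both properties. The finitely many new equations (those in (*) and (**) in which the index $n$ actually appears) split according to their total degree in $x_n$ on each side. An equation \emph{homogeneous in $x_n$} --- one whose two sides share the same $x_n$-degree, hence is invariant under the rescaling $x_n \mapsto c x_n$ --- translates to a condition $M x_n = 0$ for some $\Z$-linear map $M$ on $\Qd$ built from the matrices $\rone(x_i), \roneright(x_i)$ with $i<n$. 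I will first choose $x_n$ outside the union of the kernels of all such $M$'s, and then replace $x_n$ by $cx_n$ for a suitably generic $c \in \N$ to simultaneously falsify the remaining non-homogeneous equations, using that the two sides of each such equation scale by different powers of $c$ and that the relevant leading coefficients are non-zero (since $\tone$ has no zero divisors).

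The main obstacle will be to verify that each $M$ attached to a non-trivial homogeneous equation is not the zero map, so that its kernel is a proper subspace of $\Qd$ and the union of finitely many such kernels does not exhaust $\Zdz$. The paradigm case is the equation $x_n \tone x_i = x_i \tone x_n$, for which $M = \roneright(x_i) - \rone(x_i)$; this is non-zero precisely when $x_i$ lies outside the center of $(\Qd,+,\tone)$, which is a proper subspace exactly because $\tone \neq \toneop$. The general case will be handled in the same spirit: when $M$ is written as a difference of monomials in $\rone(x_j)$ and $\roneright(x_j)$ for $j < n$, the inductive assumptions (*) and (**) will rule out cancellations that would make $M$ vanish identically, while the non-commutativity of $\tone$ ensures that the left- and right-multiplication matrices contribute independently. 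Executing this case-by-case analysis along the lines of the analogous verification carried out in the proof of Theorem \ref{thm:ipdeterminesmult} secures the inductive step and completes the argument.
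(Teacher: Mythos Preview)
Your overall strategy coincides with the paper's: build $(x_n)$ so that $x_\alpha\tone x_\beta=x_\gamma$ forces $\alpha<\beta$, and derive a contradiction from an infinite descending chain of index sets. The condition (**) is harmless but superfluous; the paper simply picks \emph{some} $\alpha_n$ with $y_n=x_{\alpha_n}$.

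The gap is in the inductive step. Your hypotheses (*) and (**) are \emph{vector}-level statements, but the homogeneous equations you must falsify at stage $n$ reduce to \emph{matrix}-level constraints on $x_1,\ldots,x_{n-1}$ that do not follow from (*) and (**). Concretely, with $n\in\alpha\cap\gamma$, $n\notin\beta$, write $\alpha=\alpha_0\cup\{n\}$, $\gamma=\gamma_0\cup\{n\}$; then $x_\alpha\tone x_\beta=x_\gamma$ becomes
\[
\big(\rone(x_{\alpha_0})\roneright(x_\beta)-\rone(x_{\gamma_0})\big)\,x_n=0,
\]
and to conclude that this kernel is proper you need $\rone(x_{\alpha_0})\roneright(x_\beta)\neq\rone(x_{\gamma_0})$, i.e.\ $x_{\alpha_0}\tone z\tone x_\beta\neq x_{\gamma_0}\tone z$ for some $z$. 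This is not an instance of (*) or (**). Similarly, the case $n\in\beta\cap\gamma$ with $\beta_0\neq\emptyset$, $\gamma_0=\emptyset$ leads to the matrix condition $\rone(x_\alpha)\rone(x_{\beta_0})\neq\idd$, again not recoverable from your hypotheses.

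The paper resolves this by strengthening the induction: in addition to (*) (the paper's $\condone_n$), it maintains
\[
\condtwo_n:\ \rone(x_\alpha)\roneright(x_\beta)\neq\rone(x_\gamma)\ \text{for }\beta\neq\emptyset,\qquad
\condthree_n:\ \rone(x_\alpha)\rone(x_\beta)\neq\idd\ \text{for }\alpha,\beta\neq\emptyset,
\]
and verifies that these three conditions propagate together. In particular, showing the maps in the analogue of your $\nullcoll_2$ are nonzero uses $\condtwo_{n-1}$ together with non-commutativity (via $\roneright\neq\rone$). Your appeal to ``(*) and (**) will rule out cancellations'' is precisely where the argument needs these extra matrix-level hypotheses.
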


\begin{proof}
It suffices to find a sequence $(x_n)_{n \in \N} \subseteq \Zdz$ with the property that for all non-empty $\al, \be, \ga \in \finitesubsets \N$, if $x_\al \tone x_\beta = x_\gamma$, then $\al < \be$. Indeed, we claim that $A = \finiteproducts_\tone(x_n)_{n \in \N}$ is not an IP set with respect to $\toneop$. Suppose for a contradiction that there exists $(y_n)_{n \in \N} \subseteq \Zdz$ for which $\finiteproducts_{\toneop}(y_n)_{n \in \N} \subseteq A$. For each $n \in \N$, let $\al_n$ be a non-empty, finite subset of $\N$ for which $y_n = x_{\al_n}$. Since
\[x_{\al_2} \tone x_{\al_1} = y_1 \toneop y_2 \in \finiteproducts_{\toneop}(y_n)_{n \in \N} \subseteq A,\]
there exists a non-empty $\ga \in \finitesubsets \N$ such that $x_{\al_2} \tone x_{\al_1} = x_\ga$, meaning $\al_2 < \al_1$. Repeating this argument for general $y_n$'s, we see that $\al_1 > \al_2 > \cdots$ is an infinite, strictly decreasing chain of non-empty subsets of $\N$. This is clearly impossible, yielding a contradiction.

As in the proof of Theorem \ref{thm:ipdeterminesmult}, we require an induction hypothesis which is stronger than the desired conclusion. We must consider the equations
\begin{align*}
S(\al,\be,\ga): \ &x_\al \tone x_\be = x_\ga, & U(\al,\be): \ &\rone(x_\al)\rone(x_\be) = \idd,\\
T(\al,\be,\ga): \ &\rone(x_\al)\roneright(x_\be) = \rone(x_\ga),
\end{align*}
where $\rone$ is the representation of $\tone$ and $\rone(x_\emptyset)$ stands for the identity matrix $\idd$.

Call $S(\al,\be,\ga)$ and $T(\al,\be,\ga)$ \emph{homogeneous in $x_n$} if $n \not\in \al \cup \be \cup \ga$ or $n \in (\al \cap \ga) \bigtriangleup (\be \cap \ga)$; call $U(\al,\be)$ \emph{homogeneous in $x_n$} if $n \not\in \al \cup \be$. The comments made about homogeneity at this point in the proof of Theorem \ref{thm:ipdeterminesmult} apply here, too.

We proceed now to construct $(x_n)_{n \in \N} \subseteq \Zdz$ inductively so that for all $n \in \N$, the following statements hold:
\begin{align*}
\condone_n: \ & \text{for all non-empty $\al, \be, \ga \in \finitesubsets {\{1, \ldots, n\}}$, if $S(\al,\be,\ga)$ is true, then $\al < \be$;}\\
\condtwo_n: \ & \text{for all $\al, \be, \ga \in \finitesubsets {\{1, \ldots, n\}}$ with $\be \neq \emptyset$, $T(\al,\be,\ga)$ is false;}\\
\condthree_n: \ & \text{for all non-empty $\al, \be \in \finitesubsets {\{1, \ldots, n\}}$, $U(\al,\be)$ is false.}
\end{align*}

\textbf{Base case:} It suffices to find $x_1$ satisfying the statements involving equations which are homogeneous in $x_1$. Thus, we only need $x_1$ to satisfy $\roneright(x_1) \neq \rone(x_1)$. Since $\tone$ is non-commutative, $\roneright \neq \rone$, so any $x_1$ in the non-empty set $\Zdz \setminus \nullspace (\roneright-\rone)$ will do.

\textbf{Inductive step:} Suppose $x_1, \ldots, x_{n-1} \in \Zdz$ have been chosen so that $\condone_{n-1}$, $\condtwo_{n-1}$, and $\condthree_{n-1}$ hold. Let
\begin{align*}
\nullcoll_1 &= \bigcup_{\substack{\al, \be, \ga \\ \be \neq \emptyset}} \nullspace \big( \rone(x_\al) \roneright(x_\be) - \rone(x_\ga) \big),\\
\nullcoll_2 &= \bigcup_{\al, \be, \ga} \nullspace \big( z \mapsto \rone (x_\al) \roneright(z)\roneright(x_\be) - \rone(x_\ga)\rone(z) \big),\\
\nullcoll_3 &= \bigcup_{\substack{\al, \be, \ga \\ \al, \be \neq \emptyset}} \nullspace \big( \rone(x_\al) \rone(x_\be) - \idd \big),
\end{align*}
where each of the unions is over $\al, \be, \ga \in \finitesubsets {\{1, \ldots, n-1\}}$. Put $\nullcoll = \nullcoll_1 \cup \nullcoll_2 \cup \nullcoll_3$. We will show that $\Zdz \setminus \nullcoll$ is non-empty and that any $x_n \in \Zdz \setminus \nullcoll$ satisfies the statements in $\condone_{n}$, $\condtwo_{n}$, and $\condthree_{n}$ involving equations homogeneous in $x_n$. As explained before, by replacing $x_n$ with $c x_n$ for some $c \in \N$, this suffices to complete the induction.

To see that $\Zdz \setminus \nullcoll$ is non-empty, it suffices to show that each of the $\Z$-linear transformations involved in the definitions of $\nullcoll_1$, $\nullcoll_2$, and $\nullcoll_3$ is not identically zero. For the linear equations in $\nullcoll_1$ and $\nullcoll_3$, this follows immediately from $\condtwo_{n-1}$ and $\condthree_{n-1}$, respectively.

For $\al, \be, \ga \in \finitesubsets {\{1, \ldots, n-1\}}$, let $\newtran(z) = \rone (x_\al) \roneright(z)\roneright(x_\be) - \rone(x_\ga)\rone(z)$, one of the transformations appearing in $\nullcoll_2$. We wish to show that $\newtran$ is non-zero as a $\Z$-linear transformation from $\Zd$ to $\matdz$. Using Lemma \ref{lem:reachestheid}, there exist $c \in \N$ and $w \in \Zdz$ such that $\rone(w) = \roneright(w) = c \idd$. Note that $\sigma(w) = c \big(\rone (x_\al)\roneright(x_\be) - \rone(x_\ga) \big)$. If $\be \neq \emptyset$, then $\sigma(w) \neq 0$ by $\condtwo_{n-1}$, so $\sigma$ is not identically zero. On the other hand, if $\be = \emptyset$ and $\sigma$ is identically zero, then $\sigma(w) = 0$, meaning $\rone (x_\al) = \rone(x_\ga)$. This would mean that $\sigma(z) = \rone (x_\al) \big( \roneright(z) - \rone(z) \big)$ is identically zero, whereby $\roneright = \rone$, contradicting the assumption that $\tone$ is not commutative. This shows that $\nullcoll_2$, hence $\nullcoll$, is a finite collection of proper linear subspaces of $\Zd$. Therefore, $\Zdz \setminus \nullcoll$ is non-empty.

Let $x_n \in \Zdz \setminus \nullcoll$. We wish to show that the statements in $\condone_{n}$, $\condtwo_{n}$, and $\condthree_{n}$ involving equations homogeneous in $x_n$ hold. There are three cases to consider for an equation $S(\al,\be,\ga)$ from $\condone_n$ which is homogeneous in $x_n$:
\begin{enumerate}[label=(\Roman*)]
\item $n \not\in \al\cup\be\cup\ga$. $\condone_n$ holds by $\condone_{n-1}$.
\item $n \in \al \cap \ga$, $n \not\in \be$. $S(\al,\be,\ga)$ can be written as $\rone(x_{\al_0})\roneright(x_\be)x_n = \rone(x_{\ga_0})x_n$, where $\al_0, \be, \ga_0 \in \finitesubsets {\{1, \ldots, n-1\}}$ and $\be \neq \emptyset$. Since $x_n \not\in \nullcoll_1$, the equation $S(\al,\be,\ga)$ is false, so $\condone_n$ holds.
\item $n \in \be \cap \ga$, $n \not\in \al$. Let $\al, \be_0, \ga_0 \in \finitesubsets {\{1, \ldots, n-1\}}$ be such that $\be = \be_0 \cup \{n\}$ and $\ga = \ga_0 \cup \{n\}$. Note that $\al \neq \emptyset$. Consider the following cases.
\begin{enumerate}[label=(\roman*)]
\item $\be_0= \emptyset$. Since $\be = \{n\}$, if $S(\al,\be,\ga)$ is true, then $\al < \be$.
\item $\be_0 \neq \emptyset, \ga_0 = \emptyset$. $S(\al,\be,\ga)$, which can be written as $\rone(x_\al)\rone(x_{\be_0})x_n = x_n$, is false because $x_1 \not\in \nullcoll_3$. Thus, $\condone_{n}$ holds.
\item $\be_0 \neq \emptyset, \ga_0 \neq \emptyset$. By cancellativity, $S(\al,\be,\ga)$ can be written as $x_\al \tone x_{\be_0} = x_{\ga_0}$. If true, then $\condone_{n-1}$ gives that $\al < \be_0$. This means $\al < \be$, so $\condone_{n}$ holds.
\end{enumerate}
\end{enumerate}
This shows that the statements in $\condone_{n}$ involving equations homogeneous in $x_n$ hold.

There are three cases to consider for an equation $T(\al,\be,\ga)$ from $\condtwo_n$ which is homogeneous in $x_n$:
\begin{enumerate}[label=(\Roman*)]
\item $n \not\in \al\cup\be\cup\ga$. $T(\al,\be,\ga)$ is false by $\condtwo_{n-1}$.
\item $n \in \al \cap \ga$, $n \not\in \be$. $T(\al,\be,\ga)$ can be written as $\rone(x_{\al_0}) \rone(x_n) \roneright(x_{\be}) = \rone(x_{\ga_0})\rone(x_n)$, where $\al_0, \be, \ga_0 \in \finitesubsets {\{1, \ldots, n-1\}}$ and $\be \neq \emptyset$. Since $\rone(x_n)$ and $\roneright(x_{\be})$ commute and $\rone(x_n)$ is invertible, $F(\al,\be,\ga)$ is false by $\condtwo_{n-1}$.
\item $n \in \be \cap \ga$, $n \not\in \al$. $T(\al,\be,\ga)$ can be written as $\rone(x_{\al}) \roneright(x_n)\roneright(x_{\be_0}) = \rone(x_{\ga_0})\rone(x_n)$, where $\al, \be_0, \ga_0 \in \finitesubsets {\{1, \ldots, n-1\}}$. Since $x_n \not\in \nullcoll_2$, $T(\al,\be,\ga)$ is false.
\end{enumerate}
This shows that the statements in $\condtwo_{n}$ involving equations homogeneous in $x_n$ hold.

If $U(\al,\be)$ from $\condthree_n$ is homogeneous in $x_n$, then $n \not\in \al \cup \be$, and so $U(\al,\be)$ is false by $\condthree_{n-1}$. This shows that the statements in $\condthree_{n}$ involving equations homogeneous in $x_n$ hold.

This completes the proof of the inductive step and the proof of the theorem.
\end{proof}

The proof of the following corollary follows in the same way as the proof of Corollary \ref{cor:noclasscontainment} using Lemma \ref{lem:hierarchylemma} and is omitted.

\begin{corollary}\label{cor:conclusionofmaintheormtwo}
Let $r \geq 2$ and $\tone, \ttwo \in \discrims$. For all $\arbclass \in \{\ipclass,\ipclass^*\}$, if $\tone \neq \ttwo$, then
\begin{align}\label{eqn:classesingeneralposition}\arbclass(\tone) \not\subseteq \arbclass(\ttwo) \text{ and } \arbclass(\ttwo) \not\subseteq \arbclass(\tone).\end{align}
Moreover, for all $\arbclass \in \{\iprclass,\iprclass^*,\ipnaughtclass,\ipnaughtclass^*\}$, if $\ttwo \not\in \{\tone,\toneop\}$, then (\ref{eqn:classesingeneralposition}) holds.
\end{corollary}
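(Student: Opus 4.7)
My plan is to follow the template of the proof of Corollary \ref{cor:noclasscontainment}: extract concrete witnessing sets from the two previously proven theorems (Theorem \ref{thm:ipdeterminesmult} and the non-commutativity theorem immediately preceding the corollary), propagate the resulting non-containments through the hierarchy of Lemma \ref{lem:hierarchylemma}, and then dualize. The dualization step rests on the standard observation that each of the classes $\iprclass, \ipnaughtclass, \ipclass$ is upward closed, whence $\arbclass^{**} = \arbclass$, so $\arbclass_1 \not\subseteq \arbclass_2$ implies $\arbclass_2^* \not\subseteq \arbclass_1^*$ by contraposition.

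For the second assertion, suppose $\ttwo \notin \{\tone,\toneop\}$. Theorem \ref{thm:ipdeterminesmult} furnishes a set $A$ lying in $\ipclass(\tone)$ but not in $\iprclass_2(\ttwo)$. The chain $\ipclass \subseteq \ipnaughtclass \subseteq \iprclass_r \subseteq \iprclass_2$, valid for every $r \geq 2$ (since strengthening $r$ shrinks the class), gives $A \in \ipnaughtclass(\tone) \cap \iprclass_r(\tone)$ while $A \notin \ipnaughtclass(\ttwo) \cup \iprclass_r(\ttwo)$. Because the hypothesis $\ttwo \notin \{\tone,\toneop\}$ is symmetric in $\tone,\ttwo$, swapping the two multiplications yields the reverse non-containments, and then dualization handles $\iprclass_r^*$ and $\ipnaughtclass^*$.

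For the first assertion, only $\tone \neq \ttwo$ is assumed, so I would split on whether $\ttwo = \toneop$. If $\ttwo \neq \toneop$, we are in the situation of the previous paragraph and the set $A$ produced there in fact lies in $\ipclass(\tone) \setminus \ipclass(\ttwo)$ (using $\ipclass(\ttwo) \subseteq \iprclass_2(\ttwo)$). If instead $\ttwo = \toneop$, then $\tone$ is necessarily non-commutative, and the theorem immediately preceding the corollary produces a set directly in $\ipclass(\tone) \setminus \ipclass(\toneop) = \ipclass(\tone) \setminus \ipclass(\ttwo)$. Applying the same theorem with $\tone$ and $\ttwo$ interchanged gives the reverse non-containment, and dualizing completes the $\ipclass^*$ case. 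I do not anticipate any genuine obstacle; the proof is essentially organizational, the only mildly delicate point being the case split on $\ttwo = \toneop$ that distinguishes the two statements of the corollary.
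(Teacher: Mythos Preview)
Your proposal is correct and follows essentially the same approach the paper indicates: it says the proof ``follows in the same way as the proof of Corollary \ref{cor:noclasscontainment} using Lemma \ref{lem:hierarchylemma} and is omitted.'' You have faithfully reconstructed that omitted argument, including the necessary case split on $\ttwo = \toneop$ for the $\ipclass$ statement and the dualization via upward-closure.
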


The remarks at the beginning of this section combine with Corollary \ref{cor:conclusionofmaintheormtwo} to complete the proof of Theorem B from the introduction.

We can improve Theorem \ref{thm:ipdeterminesmult} in certain cases. Recall the notation from Section \ref{sec:earlyexamples}. It is easy to see that the set \[\left\{n \in \Zz \ \middle | \ \text{the 2-adic valuation of $n$ is even} \right\},\]
is in $\ipclass_2^*(\tone_{[1]})$ but not in $\ipclass_2(\tone_{[2]})$. Using Lemma \ref{lem:hierarchylemma}, this shows in particular that the classes $\central(\tone_{[1]})$ and $\central(\tone_{[2]})$ are in general position. For a general pair of aligned multiplications $\tone, \ttwo \in \discrims$, the relationship between the classes $\central(\tone)$ and $\central(\ttwo)$  remains to be better understood.

\section{Proof of Corollary C}\label{sec:proofofcorollary} 

Let $\tone \in \discrims$ and $\rone$ be its representation. It is not hard to show that in a left cancellative semigroup, the classes $\syndetic$, $\thick$, $\pws$, $\pws^*$, $\density$, and $\density^*$ are all left translation invariant. Therefore, for all $T \in \rone(\Zdz)$,
\begin{align}\label{eqn:preservestheclass}T \big( \arbclass(\tone) \big) = \{T A \ | \ A \in \arbclass(\tone)\} \subseteq \arbclass(\tone).\end{align}
When (\ref{eqn:preservestheclass}) holds, we say that \emph{$T$ preserves the class $\arbclass(\tone)$}. In this section, we determine exactly which invertible linear transformations preserve the classes of largeness with respect to $\tone$ defined in Section \ref{sec:defs}.

Recall the $\gldq$-action on $\qdiscrims$ from Section \ref{sec:ringreps}. The main utility of this action comes from the fact that if $\tone, \tone_T \in \discrims$, then
\[T: (\Zdz,\tone_T) \longrightarrow (\Zdz,\tone)\]
is a semigroup homomorphism which preserves multiplicative largeness, as shown in the following lemma. This lemma will allow us to relate $TA$ with the classes $\syndetic(\tone)$ and $\syndetic(\tone_T)$. 

\begin{lemma}\label{lem:semigrouphomomorphism}
Let $r \in \N$, $\tone \in \discrims$, $T \in \matdz \cap \gldq$, and suppose $\tone_T \in \discrims$. The map
\[T: (\Zdz,\tone_T) \longrightarrow (\Zdz,\tone)\]
is a semigroup homomorphism with the property that for all $A \subseteq \Zdz$ and all $\arbclass \in \{\syndetic,\thick,\pws,\pws^*,\density,\density^*, \iprclass,\ipnaughtclass,\ipclass\}$,
\begin{align}\label{eqn:inclassifandonlyif}A \in \arbclass(\tone_T) \text{ if and only if } TA \in \arbclass(\tone).\end{align}
If $T \in \gldz$, then (\ref{eqn:inclassifandonlyif}) holds for all $\arbclass \in \{\iprclass^*, \ipnaughtclass^*, \ipclass^*\}$ as well.
\end{lemma}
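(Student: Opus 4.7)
The plan is to exploit two structural facts: that $T$ is an injective semigroup homomorphism $(\Zdz, \tone_T) \to (\Zdz, \tone)$ (the identity $T(x \tone_T y) = Tx \tone Ty$ being immediate from the definition of $\tone_T$, and injectivity following from $\det T \neq 0$), and that $T\Zd$ is a finite-index additive subgroup of $\Zd$. By Lemma \ref{lem:reachestheid}, I can fix $w \in \Zdz$ and $c \in \N$ with $\rone(w) = \roneright(w) = c \idd$, and after replacing $w$ by a positive integer multiple I may additionally assume $|\det T| \mid c$. Then $w$ is central in $(\Zdz, \tone)$ in the sense that $w \tone y = y \tone w = cy$ for all $y \in \Zd$, and $w \tone \Zdz = c\Zdz \subseteq T\Zdz$; moreover $\{w\}$ witnesses that $w \tone \Zdz$ is syndetic in $(\Zdz, \tone)$. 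This single observation drives the entire argument.

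For the IP-type classes $\iprclass$, $\ipnaughtclass$, $\ipclass$, the forward direction of \eqref{eqn:inclassifandonlyif} is immediate, as $T$ carries the finite-products set generated by $(y_n)$ with respect to $\tone_T$ onto the finite-products set generated by $(Ty_n)$ with respect to $\tone$; the backward direction uses injectivity together with the fact that any IP set contained in $TA \subseteq T\Zdz$ has generators in $T\Zdz$, and hence lifts uniquely to $\Zdz$. For the dual classes $\iprclass^*$, $\ipnaughtclass^*$, $\ipclass^*$, the extra hypothesis $T \in \gldz$ makes $T: \Zdz \to \Zdz$ bijective, so that every IP set in $(\Zdz, \tone)$ is already in the image of $T$; without $T \in \gldz$, an IP set living outside $T\Zd$ would obstruct the forward implication.

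For $\syndetic$, $\thick$, $\pws$, $\density$, the strategy is to use $w$ to transport witnesses between $\Zdz$ and $T\Zd$ without sacrificing largeness. Forward: if $\{s_i\}$ is a $\tone_T$-syndetic witness for $A$ then $\{Ts_i \tone w\}$ witnesses $\tone$-syndeticity of $TA$; if $A$ is $\tone_T$-thick, then any finite $F \subseteq \Zdz$ can be shifted by $w$ into $T\Zdz$, lifted through $T$, and $\tone_T$-thickness applied to the lift; the density forward is analogous, and piecewise syndeticity follows via Lemma \ref{lem:basicfactsofduals} \ref{item:pwsisintersection} together with the distributive law $T(C \cap T') = TC \cap TT'$ that holds because $T$ is injective. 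Backward: given $\tone$-syndetic witnesses $\{g_i\}$ for $TA$, the elements $cg_i = g_i \tone w$ lie in $c\Zd \subseteq T\Zd$, so $s_i := T^{-1}(cg_i) \in \Zdz$ is well-defined and witnesses $\tone_T$-syndeticity of $A$ upon applying the hypothesis to shifts of the form $y = w \tone Tx$. For thickness, the witness set $W_F = \{x \in \Zdz : F \tone x \subseteq TB\}$ is itself $\tone$-thick, so by the duality $\syndetic = \thick^*$ it meets the syndetic set $w \tone \Zdz$; any element of $W_F \cap w \tone \Zdz$ has the form $Tx_0$, and $x_0$ is the required witness for $\tone_T$-thickness of $B$. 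Density is handled symmetrically, by applying $\tone$-density to $cTF_0 = TF_0 \tone w$ and extracting the in-$T\Zd$ witness $w \tone y$. For $\pws$ backward, I will first reduce to witnesses $g_i \in T\Zd$ (using Lemma \ref{lem:basicfactsofdensity} \ref{item:inversetranslatehasmoredensity} to preserve thickness of $\bigcup g_i^{-1} \tone TA$ under the substitution $g_i \mapsto cg_i$), and then verify that $\bigcup s_i^{-1} \tone_T A$ is $\tone_T$-thick by observing that its $T$-image equals $V \cap T\Zd$ where $V = \bigcup (Ts_i)^{-1} \tone TA$ is $\tone$-thick, and arguing via $V \cap w \tone \Zdz$ that $V \cap T\Zd$ is $\tone$-thick as well.

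For the duals $\pws^*$ and $\density^*$, the reverse direction is immediate by combining injectivity of $T$ with the forward direction of the corresponding primal class: given $C \in \pws(\tone_T)$, the forward direction gives $TC \in \pws(\tone)$, so $TA \cap TC \neq \emptyset$ lifts to $A \cap C \neq \emptyset$. The forward direction hinges on the pullback $\widehat D := T^{-1}(D \cap T\Zd)$ for $D \in \pws(\tone)$ (respectively, $D \in \density(\tone)$): I will show that $\widehat D$ lies in the same class with respect to $\tone_T$, by the same $w$-based argument used for the primal classes (specifically, one computes $\dstar_{\tone_T}(\widehat D) \geq \dstar_\tone(D)$ in the density case by applying $\tone$-density to $cTF_0$, and the analogous inequality for $\pws$), so that $A \cap \widehat D \neq \emptyset$ pushes forward under $T$ to $TA \cap (D \cap T\Zd) \neq \emptyset$. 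The main obstacle throughout is that $T\Zd$ is a proper additive sublattice of $\Zd$ when $T \notin \gldz$, so $\tone$-largeness witnesses need not live in $T\Zd$; the role of the central element $w$ is precisely to supply a uniform, largeness-preserving shift into $T\Zd$, and the duality $\syndetic = \thick^*$ then extracts $T\Zd$-witnesses from arbitrary $\Zdz$-witnesses wherever a direct pullback is unavailable.
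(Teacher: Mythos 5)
Your proposal is correct, and the underlying ingredients overlap substantially with the paper's argument, but the organization is genuinely different, and it's worth spelling out the comparison.

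The paper's proof establishes two clean general facts and lets duality do the rest: first, the density identity $\dstar_{\tone_T}(A) = \dstar_\tone(TA)$ (by the change-of-variable $G = cT^{-1}F$, $c = \det T$); second, that $T(\Zdz) \in \pws^*(\tone)$ (and in $\density^*(\tone)$ in the amenable case), proved by observing that $w^{-1} \tone (\Zdz \setminus c\Zdz) = \emptyset$ for the central $w$ with $\rone(w) = c\idd$. These two facts, combined with Lemma~\ref{lem:basicfactsofduals}~\ref{item:pwsisintersection} and~\ref{item:psstarintersectedlarge} and Lemma~\ref{lem:basicfactsofdensity}~\ref{item:basicsofthick}, propagate the equivalence mechanically through all the primal and dual classes. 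Your approach instead handles each class with a direct witness-transport argument using the central element $w$ (your observation $w \tone y = y \tone w = cy$, $w \tone \Zdz = c\Zdz \subseteq T\Zdz$), and at the dual level you invoke the pullback $\widehat D = T^{-1}(D \cap T\Zd)$ rather than the non-membership of $\Zdz \setminus c\Zdz$ in $\pws(\tone)$. Note that the fact $T(\Zdz) \in \pws^*(\tone)$ is lurking in your argument anyway: the step where you show $V \cap T\Zd$ remains thick (equivalently $\dstar_\tone(D \cap T\Zdz) \geq \dstar_\tone(D)$) is precisely the content of $T(\Zdz) \in \pws^*(\tone)$ (resp.\ $\density^*(\tone)$) via Lemma~\ref{lem:basicfactsofduals}~\ref{item:psstarintersectedlarge}, so the paper has simply isolated this as a reusable auxiliary claim rather than reproving it inline for each class. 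The trade-off is that your version is more self-contained and elementary per class, while the paper's two-lemma structure is shorter overall and makes the reduction to the density identity more transparent. One small point of care: when you pass from $\bigcup g_i^{-1} \tone TA$ being thick to $\bigcup (cg_i)^{-1} \tone TA$ being thick, it is worth stating explicitly that $(cg_i)^{-1} \tone TA = w^{-1} \tone (g_i^{-1} \tone TA)$, so that the union is $w^{-1} \tone \bigl(\bigcup g_i^{-1} \tone TA\bigr)$ and Lemma~\ref{lem:basicfactsofdensity}~\ref{item:inversetranslatehasmoredensity} applies to the whole union at once; the way you phrased it ("under the substitution $g_i \mapsto cg_i$") could mislead a reader into applying the lemma term-by-term, which does not preserve thickness of a union.
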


\begin{proof}
It is straightforward to check that $T$ is a semigroup homomorphism. Since $T$ is injective, it is straightforward to verify that (\ref{eqn:inclassifandonlyif}) holds for the classes $\iprclass$, $\ipnaughtclass$, and $\ipclass$.  If $T \in \gldz$, then $T$ is a semigroup isomorphism, in which case the conclusion holds for all classes, in particular $\iprclass^*$, $\ipnaughtclass^*$, and $\ipclass^*$.

For the remaining classes, first we will prove that for all $A \subseteq \Zdz$,
\begin{align}\label{eqn:equaldensities}\dstar_{\tone_T}(A) = \dstar_\tone(TA).\end{align}
To this end, let $\al < \dstar_{\tone_T}(A)$, and let $F \in \finitesubsets \Zdz$. Let $c = \det(T)$, and note that $G = cT^{-1}F \in \finitesubsets \Zdz$. Since $\al < \dstar_{\tone_T}(A)$, there exists $x \in \Zdz$ for which
\begin{align*}
\big|\big(F \tone (cTx)\big) \cap TA\big| &= \big|\big(TG \tone Tx \big) \cap TA \big|\\
& = \big|T\big((G \tone_T x) \cap A\big)\big| \\
&= \big|(G \tone_T x) \cap A\big| \\
& \geq \al |G| = \al |F|.
\end{align*}
This shows that $\al \leq \dstar_{\tone}(TA)$. Since $\al < \dstar_{\tone_T}(A)$ was arbitrary, $\dstar_{\tone_T}(A) \leq \dstar_{\tone}(TA)$. The same idea works to prove the reverse inequality.

Next, we claim that $T(\Zdz)$ is in $\pws^*(\tone)$ and, if $(\Zdz,\tone)$ is left amenable, in $\density^*(\tone)$. By Lemma \ref{lem:reachestheid}, replacing if necessary $c$ by a multiple of $c$, there exists $w \in \Zdz$ for which $\rone(w) = c\idd$ and $c \Zdz \subseteq T(\Zdz)$. Thus, it suffices to show that $B = \Zdz \setminus c \Zdz$ is in neither $\pws(\tone)$ nor $\density(\tone)$. Note that $w^{-1} \tone B = \emptyset$. It follows by Lemma \ref{lem:basicfactsofdensity} \ref{item:inversetranslatehasmoredensity} and the fact that $w$ is in the center of $(\Zdz,\tone)$ that for all $y_1, \ldots, y_k \in \Zdz$,
\begin{align*}
0 &\leq \dstar_\tone (y_1^{-1} \tone B \cup \cdots \cup y_k^{-1} \tone B) \\
& \leq \dstar_\tone \big(w^{-1} \tone (y_1^{-1} \tone B \cup \cdots \cup y_k^{-1} \tone B) \big)\\
& \leq \dstar_\tone \big (y_1^{-1} \tone (w^{-1} \tone B) \cup \cdots \cup y_k^{-1} \tone (w^{-1} \tone B) \big)\\
& = \dstar_\tone(\emptyset) = 0.
\end{align*}
By the definition of piecewise syndeticity and Lemma \ref{lem:basicfactsofdensity} \ref{item:basicsofthick}, this shows that $B \not\in \pws(\tone)$ and, if $(\Zdz,\tone)$ is left amenable, $B \not\in \density(\tone)$.

By (\ref{eqn:equaldensities}) and Lemma \ref{lem:basicfactsofdensity} \ref{item:basicsofthick}, (\ref{eqn:inclassifandonlyif}) holds immediately for the classes $\density$ and $\thick$. To see that it holds for $\syndetic$, let $A \subseteq \Zdz$ and put $B = \Zdz \setminus A$. Now $A \in \syndetic (\tone_T)$ if and only if $B \not\in \thick(\tone_T)$ if and only if $TB \not\in \thick(\tone)$. By Lemma \ref{lem:basicfactsofduals} \ref{item:psstarintersectedlarge}, since $T(\Zdz) \in \pws^*(\tone)$, $TB \not\in \thick(\tone)$ if and only if $TB \cup (\Zdz \setminus T(\Zdz)) \not\in \thick(\tone)$ if and only if $\Zdz \setminus TA \in \thick(\tone)$ if and only if $TA \in \syndetic (\tone)$.

Since (\ref{eqn:inclassifandonlyif}) holds for $\thick$ and $\syndetic$, it holds for $\pws$ by Lemma \ref{lem:basicfactsofduals} \ref{item:pwsisintersection}. It holds for $\pws^*$ and $\density^*$ by the same reasoning as in the previous paragraph, replacing ``not thick'' with ``zero density'' (using Lemma \ref{lem:basicfactsofdensity} \ref{item:unionoftwozerodensity} and \ref{item:intersectionofdstar}) and ``not piecewise syndetic'' (using Lemma \ref{lem:basicfactsofduals} \ref{item:psstarintersectedlarge}).
\end{proof}


\begin{lemma}\label{lem:specialpropertyofgldqaction}
If $T \in \gldq \cap \matdz$ and $\tone_T \in \{\tone,\toneop\}$, then $T \in \gldz$.
\end{lemma}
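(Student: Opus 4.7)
The plan is to exhibit a symmetric, non-degenerate, $\Z$-valued bilinear form on $(\Zd,+,\tone)$ that is preserved by $T$, and then extract $\det(T)^2 = 1$ from the resulting Gram matrix identity. Set $B: \Qd \times \Qd \to \Q$ by $B(x,y) = \text{Tr}\bigl(\rone(x)\rone(y)\bigr)$. Symmetry is immediate from cyclicity of the matrix trace; $\Z$-valuedness on $\Zd$ follows from $\rone(\Zd) \subseteq \matdz$ (Lemma \ref{lem:repproperties}); and non-degeneracy comes from the fact that $(\Qd,+,\tone)$ is a division algebra, so every $x \in \Qdz$ has a $\tone$-inverse $x^{-1}$ and $B(x,x^{-1}) = \text{Tr}(\rone(1)) = \text{Tr}(\idd) = d \neq 0$.

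Next I would verify $B(Tx,Ty) = B(x,y)$ in both cases. If $\tone_T = \tone$, Lemma \ref{lem:gldqactionproperties} yields $T \in \aut(\tone)$ and formula (\ref{eqn:repofactedonmult}) becomes $\rone(Tx) = T\rone(x)T^{-1}$, so
\[B(Tx,Ty) = \text{Tr}\bigl(T\rone(x)T^{-1}T\rone(y)T^{-1}\bigr) = \text{Tr}(\rone(x)\rone(y)) = B(x,y).\]
If $\tone_T = \toneop$, the same lemma gives $T \in \isom(\tone,\toneop)$; since the left regular representation of $\toneop$ is precisely $\roneright$, formula (\ref{eqn:repofactedonmult}) now reads $\rone(Tx) = T\roneright(x)T^{-1}$, and
\[B(Tx,Ty) = \text{Tr}\bigl(\roneright(x)\roneright(y)\bigr) = \text{Tr}\bigl(\roneright(y \tone x)\bigr)\]
by the anti-homomorphism property of $\roneright$. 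To identify this with $B(x,y) = \text{Tr}(\rone(x \tone y))$, I would combine the cyclic identity $\text{Tr}(\rone(y \tone x)) = \text{Tr}(\rone(x \tone y))$ with the pointwise equality $\text{Tr}(\rone(z)) = \text{Tr}(\roneright(z))$ on $\Qd$.

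The pointwise equality of left and right regular traces is the one technical ingredient, and it is where I expect the main obstacle to lie. I would prove it by splitting $\Qd = Z + [A,A]$ where $Z$ is the center of the division algebra $A = (\Qd,+,\tone)$: on $Z$ the two representations coincide because central elements commute with everything, and on any commutator $xy - yx$ both traces vanish, since $\rone$ is a homomorphism and $\roneright$ an anti-homomorphism, so $\rone(xy-yx)$ and $\roneright(xy-yx)$ are commutators of matrices. Alternatively, one extends scalars to $\overline{\Q}$ and uses $A \otimes_{\Q} \overline{\Q} \cong \prod_\al M_{m_\al}(\overline{\Q})$, in each factor of which both regular representations have trace $m_\al \cdot \text{tr}$.

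To conclude, let $G \in \matdz$ be the Gram matrix $G_{ij} = B(e_i,e_j)$. The invariance $B(Tx,Ty) = B(x,y)$ translates in coordinates to $T^{\intercal} G T = G$, and taking determinants yields $(\det T)^2 \det G = \det G$. Since $\det G \neq 0$ by non-degeneracy, $\det T = \pm 1$, so $T \in \gldz$.
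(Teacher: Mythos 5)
Your proof is correct, and it takes a genuinely different route from the paper's. The paper proceeds by regarding $\conj_T(A) = TAT^{-1}$ as a $\Q$-linear map on $\matdq$, computing its determinant to be $1$ via the Kronecker product $T^{\text{t}}\otimes T^{-1}$, and then noting that this map sends the lattice $\rone(\Zd)$ into the sublattice $\rone(T\Zd)$; being a determinant-$1$ $\Z$-linear endomorphism of a full-rank lattice, it must be a lattice automorphism, forcing $\rone(T\Zd)=\rone(\Zd)$ and hence $T\in\gldz$. The $\toneop$ case is handled by the reduction $\tone_{T^2}=\tone$. Your argument instead produces a $T$-invariant non-degenerate $\Z$-valued symmetric bilinear form (the trace form of the regular representation) and reads off $(\det T)^2=1$ from $T^{\intercal}GT=G$; it is a textbook move from algebraic number theory, and it packages the conclusion in a conceptually satisfying invariant. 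The cost is that your treatment of the $\tone_T=\toneop$ case invokes the identity $\text{Tr}(\rone(z))=\text{Tr}(\roneright(z))$, which you correctly flag as the technical crux; it does hold here (it requires simplicity/separability, and you supply a valid proof via the decomposition $A=Z\oplus[A,A]$ or by splitting over $\overline{\Q}$), but it fails for general finite-dimensional algebras, so it deserves the care you give it. Worth noting: you could sidestep that identity entirely by borrowing the paper's reduction, applying your case-one argument to $T^2$ (since $\tone_{T^2}=\tone$) to get $(\det T)^4=1$ and hence $\det T=\pm1$.
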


\begin{proof}
Suppose $\tone_T = \tone$. We have by equation (\ref{eqn:repofactedonmult}) that for all $x \in \Zd$,
\begin{align}\label{eqn:selfrepisequal}T\rone (x)T^{-1} = \rone(Tx).\end{align}
The map $\conj_T: \roneqd \to \roneqd$ defined by $\conj_T(A) = TAT^{-1}$ is a linear map of the $\Q$-vector space $\roneqd$ with determinant equal to 1. Indeed, if $\roneqd$ is ``vectorized'' by considering its elements as column vectors, then the matrix of $\conj_T$ with respect to the usual basis is given by the Kronecker product $T^{\text{t}} \otimes T^{-1}$. By properties of the Kronecker product, $\det(T^{\text{t}} \otimes T^{-1}) = \det(T^{\text{t}})^d \det(T^{-1})^d = 1.$

Let $\lat = \ronezd$ and $\lat'=\rone(T \Zd)$; both are $d$-dimensional lattices in $\roneqd$, and since $T: \Zd \to \Zd$, $\lat' \subseteq \lat$. To show that $T \in \gldz$, it suffices to show that $\lat=\lat'$. Indeed, since $\rone$ is injective, this will prove that $T$ is surjective. Equation (\ref{eqn:selfrepisequal}) gives that $\conj_T(\lat) = \lat'$. This means $\conj_T: \lat \to \lat$ is an injective $\Z$-linear map. Since $\lat$ is full dimensional in $\roneqd$, the determinant of $\conj_T$ as a map of the lattice $\lat$ is equal to $1$. It follows that $\conj_T$ is a lattice isomorphism, that is, $\conj_T(\lat) = \lat$, whereby $\lat' = \conj_T(\lat) = \lat$.

Since $\cdot_{\text{op}}: \discrims \to \discrims$ commutes with the $\gldq$-action on $\discrims$, if $\tone_T = \toneop$, then $\tone_{T^2} = \tone$. By the work above, $\det(T)^2 = 1$, so $\det T = \pm 1$.
\end{proof}

%

The subspaces $\normshort(\tone)$, $\aut(\tone)$, and $\isom(\tone,\toneop)$ of $\gldq$ appearing in the following corollary were defined at the end of Section \ref{sec:ringreps}.

\begin{corollary}\label{cor:whichtspreserveclasses}
Let $\tone \in \discrims$, and $T \in \matdz \cap \gldq$.
\begin{enumerate}[label=(\Roman*)]
\item\label{item:invcorone} For all $\arbclass \in \{\syndetic,\thick,\pws,\pws^*,\density,\density^*\}$, $T$ preserves the class $\arbclass(\tone)$ if and only if $T \in \normshort(\tone)$.
\item\label{item:invcortwo} For all $\arbclass \in \{\ipclass,\ipclass^*\}$, $T$ preserves the class $\arbclass(\tone)$ if and only if $T \in \aut(\tone)$.
\item\label{item:invcorthree} For all $r \geq 2$ and $\arbclass \in \{\iprclass,\iprclass^*,\ipnaughtclass,\ipnaughtclass^*\}$, $T$ preserves the class $\arbclass(\tone)$ if and only if $T \in \aut(\tone) \cup \isom(\tone,\toneop)$.
\end{enumerate}
\end{corollary}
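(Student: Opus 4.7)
The plan is to organize all three parts around two main ingredients. First, Lemma \ref{lem:semigrouphomomorphism} converts ``$T$ preserves $\arbclass(\tone)$'' into a containment $\arbclass(\tone) \subseteq \arbclass(\tone_T)$. Second, the ``general position'' results (Corollary \ref{cor:noclasscontainment} for part (I) and Corollary \ref{cor:conclusionofmaintheormtwo} for parts (II) and (III)) force the allowed equalities between $\tone$ and $\tone_T$. Lemma \ref{lem:gldqactionproperties} then translates alignment of $\tone_T$ with $\tone$, the equality $\tone_T = \tone$, or the equality $\tone_T = \toneop$ into $T \in \normshort(\tone)$, $T \in \aut(\tone)$, or $T \in \isom(\tone, \toneop)$ respectively. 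A key technical wrinkle is that Lemma \ref{lem:semigrouphomomorphism} requires $\tone_T \in \discrims$, and this will be handled differently in part (I) than in parts (II) and (III).

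For part (I), a scaling trick bypasses the issue. I would choose $c \in \N$ that is simultaneously divisible by the denominators of the entries of $T^{-1}$ (so that $cT^{-1} \in \matdz$ and hence $\tone_{cT} \in \discrims$) and of the form $c\idd = \rone(w)$ for some $w \in \Zdz$ (achievable by iterating Lemma \ref{lem:reachestheid}). Because $c\idd$ lies in $\centshort(\tone) \subseteq \normshort(\tone)$, membership in $\normshort(\tone)$ is unaffected by replacing $T$ with $cT$; and because scaling a set by $c$ coincides with left $\tone$-translation by $w$, the left translation invariance of the classes in part (I) guarantees that $T$ preserves $\arbclass(\tone)$ if and only if $cT$ does. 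So one may reduce to the case $\tone_T \in \discrims$, at which point Lemma \ref{lem:semigrouphomomorphism} converts preservation into $\arbclass(\tone) \subseteq \arbclass(\tone_T)$, and Corollary \ref{cor:noclasscontainment} forces this containment to come from alignment, giving $T \in \normshort(\tone)$. The backward direction is the same chain of equivalences run in reverse.

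For parts (II) and (III), the backward directions are direct: if $T$ is in $\aut(\tone)$ or $\isom(\tone, \toneop)$, then Lemma \ref{lem:specialpropertyofgldqaction} upgrades $T$ to $\gldz$, making $T$ an actual semigroup (anti-)isomorphism of $(\Zdz, \tone)$, with the opposite case handled by the reversal identity (\ref{eqn:reversingtrick}); such a map visibly preserves FP-structure, hence all IP-type classes and their duals. The forward direction is the main obstacle, because the IP-type classes are \emph{not} left translation invariant (for instance, the powers of $2$ form an $\ipclass$ set in $(\Z, \cdot)$ whose scaling by $3$ does not), so the scaling trick from part (I) is unavailable. My plan is to argue that preservation itself forces $\tone_T \in \discrims$, by showing that if $T(\Zd)$ fails to be closed under $\tone$ then one can construct an IP (or IP$_r$) set whose image is sparse enough to fail to be in the class -- this is the step I expect to require the most care. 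Once $\tone_T \in \discrims$ is secured, Lemma \ref{lem:semigrouphomomorphism} converts preservation into $\arbclass(\tone) \subseteq \arbclass(\tone_T)$, and Corollary \ref{cor:conclusionofmaintheormtwo} forces $\tone_T = \tone$ in the $\ipclass$, $\ipclass^*$ cases of part (II) and $\tone_T \in \{\tone, \toneop\}$ in the $\iprclass$, $\iprclass^*$, $\ipnaughtclass$, $\ipnaughtclass^*$ cases of part (III), corresponding via Lemma \ref{lem:gldqactionproperties} to $T \in \aut(\tone)$ and $T \in \aut(\tone) \cup \isom(\tone, \toneop)$ respectively.
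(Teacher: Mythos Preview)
Your treatment of part~(I) and of the backward direction of (II) and (III) matches the paper's argument essentially verbatim: scale by $c$ so that $\tone_{cT}\in\discrims$, note that $c\idd=\rone(w)$ makes the scaling harmless for translation-invariant classes, and for the backward direction use Lemma~\ref{lem:specialpropertyofgldqaction} to upgrade $T$ to $\gldz$.

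The divergence, and the gap, is in the \emph{forward} direction of (II) and (III). You propose to first show that preservation of an IP-type class forces $\tone_T\in\discrims$, and only then invoke Lemma~\ref{lem:semigrouphomomorphism} together with Corollary~\ref{cor:conclusionofmaintheormtwo}. Two problems arise. First, the step ``if $T(\Zd)$ is not closed under $\tone$ then some IP set has non-IP image'' is only a hope, not an argument; note that $T\Zdz$ always contains the $\tone$-subsemigroup $c\Zdz$ and hence always contains FP-sets, so any such construction would have to be considerably more delicate than ``the image is sparse.'' Second, even granting $\tone_T\in\discrims$, your next sentence ``Lemma~\ref{lem:semigrouphomomorphism} converts preservation into $\arbclass(\tone)\subseteq\arbclass(\tone_T)$'' is unjustified for $\arbclass\in\{\iprclass^*,\ipnaughtclass^*,\ipclass^*\}$: the lemma gives the equivalence $A\in\arbclass(\tone_T)\Leftrightarrow TA\in\arbclass(\tone)$ for those starred classes only when $T\in\gldz$, which is precisely what you have not yet established.

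The paper avoids both issues by \emph{reusing the scaling trick} in the forward direction of (II) and (III) rather than abandoning it. With $c,w$ chosen once so that $\rone(w)=c\idd$ and $\tone_{cT}=(\tone_{\rone(w)})_T\in\discrims$, the hypothesis $T\notin\aut(\tone)$ (resp.\ $T\notin\aut(\tone)\cup\isom(\tone,\toneop)$) becomes $\tone_{cT}\neq\tone_{\rone(w)}$ (resp.\ $\tone_{cT}\notin\{\tone_{\rone(w)},(\tone_{\rone(w)})_{\text{op}}\}$), and Corollary~\ref{cor:conclusionofmaintheormtwo} supplies a set $A$ in the class for $\tone_{\rone(w)}$ but not for $\tone_{cT}$. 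The two injective semigroup homomorphisms $T:(\Zdz,\tone_{cT})\to(\Zdz,\tone_{\rone(w)})$ and $c\idd:(\Zdz,\tone_{\rone(w)})\to(\Zdz,\tone)$ then transport this to a witness $cA$ in $\arbclass(\tone)$ with $T(cA)=cTA\notin\arbclass(\tone)$. For the unstarred classes this uses the two-sided equivalence of Lemma~\ref{lem:semigrouphomomorphism} directly; for the starred classes one argues through the unstarred ones (an injective homomorphism sends FP-sets to FP-sets, so ``not $\ipclass^*$'' is pushed forward by exhibiting a disjoint FP-set and mapping it). The point is that neither $T$ nor $c\idd$ needs to lie in $\gldz$ for this to go through.
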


\begin{proof}
By Lemma \ref{lem:reachestheid}, there exist $c \in \N$ and $w \in \Zdz$ such that $\tone_{cT} = (\tone_{\rone(w)})_T \in \discrims$. To see \ref{item:invcorone}, note that if $T \in \normshort(\tone)$, then so is $cT$. In this case, by Lemma \ref{lem:gldqactionproperties}, $\tone$, $\tone_{\rone(w)}$, and $\tone_{cT}$ are all aligned. By Corollary \ref{cor:ifdirectionofmaintheoremone} and Lemma \ref{lem:semigrouphomomorphism},
\begin{align}\label{eqn:stringofcontainments}T \arbclass(\tone)  = T\arbclass((\tone_{\rone(w)})_T) \subseteq \arbclass(\tone_{\rone(w)}) = \arbclass(\tone).\end{align}
Therefore, $T \arbclass(\tone) \subseteq \arbclass(\tone)$.

If $T \not\in \normshort(\tone)$, then neither is $cT$. By Lemma \ref{lem:gldqactionproperties}, $\tone$ and $\tone_{cT}$ are not aligned, so by Corollary \ref{cor:noclasscontainment}, there exists $A \in \arbclass(\tone) \setminus \arbclass(\tone_{cT})$. By Lemma \ref{lem:semigrouphomomorphism}, since $A \not\in \arbclass(\tone_{cT})$, $TA \not\in \arbclass(\tone_{\rone(w)}) = \arbclass(\tone)$. Therefore, $T \arbclass(\tone) \not\subseteq \arbclass(\tone)$.

We will show \ref{item:invcortwo} and \ref{item:invcorthree} simultaneously. If $T \in \aut(\tone) \cup \isom(\tone,\toneop)$, then by Lemma \ref{lem:gldqactionproperties}, $\tone_T \in \{\tone,\toneop\}$, and by Lemma \ref{lem:specialpropertyofgldqaction}, $T \in \gldz$. In this case, by Lemma \ref{lem:semigrouphomomorphism}, (\ref{eqn:stringofcontainments}) holds for any $\arbclass$. Therefore, $T \arbclass(\tone) \subseteq \arbclass(\tone)$.

If $T \not\in \aut(\tone)$, then $\tone_T \neq \tone$. This means $\tone_{cT} \neq \tone_{\rone(w)}$, so by Corollary \ref{cor:conclusionofmaintheormtwo}, there exists $A \in \ipclass^*(\tone_{\rone(w)}) \setminus \ipclass^*(\tone_{cT})$. Since $A \not\in \ipclass^*(\tone_{cT})$ and, by Lemma \ref{lem:semigrouphomomorphism}, $T$ takes $\ip$ sets to $\ip$ sets, $TA \not\in \ipclass^*(\tone_{c})$. By Lemma \ref{lem:semigrouphomomorphism} with multiplication by $c$, using the same argument, $cA \in \ipclass^*(\tone)$ and $TcA \not\in \ipclass^*(\tone)$. This shows $T \ipclass^*(\tone) \not\subseteq \ipclass^*(\tone)$. The same argument works with $\ipclass^*$ replaced by $\ipclass$.

If $T \not\in \aut(\tone) \cup \isom(\tone,\toneop)$, then $\tone_T \not\in \{\tone,\toneop\}$. Recalling that $\cdot_{\text{op}}$ commutes with the $\gldq$-action on $\discrims$, $\tone_{cT} \not\in \{\tone_{\rone(w)},(\tone_{\rone(w)})_{\text{op}}\}$. The argument now proceeds just as in the preceding paragraph.
\end{proof}

Upon writing the condition in (\ref{eqn:mainconditioncorollaryc}) in terms of representations, this completes the proof of Corollary C from the introduction.

As demonstrated in Section \ref{sec:earlyexamples}, a description of the automorphism group $\aut(\tone)$ allows one in many cases to describe $\normshort(\tone)$ and $\isom(\tone,\toneop)$ explicitly. In these cases, Corollary \ref{cor:whichtspreserveclasses} provides a geometric understanding of many classes of multiplicative largeness.

As a basic example of this, in the notation of Section \ref{sec:earlyexamples}, the transformation $T: (x_1,x_2) \mapsto (x_2,x_1)$ is an element of $\normshort(\tthree {x^2+1})$. Therefore, Corollary \ref{cor:whichtspreserveclasses} gives that the class of multiplicatively \psstar{} sets with respect to the multiplication induced on $\Z^2$ from $\Z[i]$ is preserved under reflection about the line $x_1 = x_2$. The map $T$ does not, however, lie in $\normshort(\tthree {x^2-c})$ for any other $c \in \Z \setminus \{0^2, 1^2, \ldots \}$, meaning that it does not preserve the corresponding class of multiplicatively large sets with respect to any of the other multiplications induced from the rings $\Z[\sqrt{c}]$.

\section{A combinatorial characterization of \texorpdfstring{$\pws^*$}{PS*} and sources of \adive{} \texorpdfstring{$\ipr^*$}{IPr*} sets in \texorpdfstring{$\Zd$}{Zd}}\label{sec:applications} 

Theorem \ref{thm:thmfrombgotherpaper} gives that \adive{} $\ipr^*$ sets in $\Zd$ are multiplicatively \psstar{} with respect to all proper multiplications on $\Zd$. Theorem A and its improvements in Section \ref{sec:manymults} show that the classes of multiplicatively \psstar{} sets for the various proper multiplications on $\Zd$ are, predominantly, in general position. Thus, the results in this paper serve to enhance the conclusions of those results which yield \adive{} $\ipr^*$ sets in $\Zd$. In this section, we give examples of such results from combinatorics and measure theoretical and topological dynamics.

First, we give a combinatorial characterization of sets in the class $\pws^*(\tone)$. The proof of this characterization follows from Lemma \ref{lem:basicfactsofduals} \ref{item:combcharofpwsstar} and the definitions. Recall that $\cube N = \{-N,\ldots,N\}^d \setminus \{0\}$.

\begin{lemma}\label{lem:altcharofpstar}
Let $A \subseteq \Zd$ and $\tone \in \discrims$. $A \in \pws^*(\tone)$ if and only if for all $F \in \finitesubsets {\Zdz}$, there exists $N \in \N$ such that for all $x \in \Zdz$, there exists $z \in \cube N$ such that $F \tone z \tone x \subseteq A$.
\end{lemma}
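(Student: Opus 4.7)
The plan is to derive this lemma directly from the combinatorial characterization of $\pws^*$ already recorded in Lemma \ref{lem:basicfactsofduals}\ref{item:combcharofpwsstar}, which says that $A \in \pws^*(\tone)$ if and only if for every $F \in \finitesubsets{\Zdz}$ there is a $\tone$-syndetic $C \subseteq \Zdz$ with $F \tone C \subseteq A$. The content of the present lemma is then just that inside $(\Zdz,\tone)$ the finite witness set appearing in the definition of syndeticity may be taken to be $\cube N$ for some $N \in \N$, and conversely any finite subset of $\Zdz$ fits inside some $\cube N$.

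For the forward direction, given $F \in \finitesubsets{\Zdz}$, I would invoke Lemma \ref{lem:basicfactsofduals}\ref{item:combcharofpwsstar} to produce a $\tone$-syndetic $C$ with $F \tone C \subseteq A$. By Definition \ref{def:syndthick} there exist $s_1, \ldots, s_k \in \Zdz$ with $\Zdz = \bigcup_{i=1}^k s_i^{-1}\tone C$; picking $N \in \N$ so that $\{s_1, \ldots, s_k\} \subseteq \cube N$, any $x \in \Zdz$ satisfies $s_i \tone x \in C$ for some $i$, whereby $z := s_i \in \cube N$ gives $F \tone z \tone x \subseteq F \tone C \subseteq A$.

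For the reverse direction, given the assumed condition and an $F \in \finitesubsets{\Zdz}$, I would set $C = \{y \in \Zdz \mid F \tone y \subseteq A\}$, so that $F \tone C \subseteq A$ tautologically. The hypothesis asserts precisely that for every $x \in \Zdz$ there is $z \in \cube N$ with $z \tone x \in C$; since $0 \notin \cube N$, the finitely many elements of $\cube N$ serve as a valid syndeticity-witness in $(\Zdz,\tone)$, showing $C \in \syndetic(\tone)$. Applying Lemma \ref{lem:basicfactsofduals}\ref{item:combcharofpwsstar} in the other direction yields $A \in \pws^*(\tone)$.

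There is no real obstacle — the argument is a straightforward repackaging of the dual-class characterization combined with the observation that finite subsets of $\Zdz$ and boxes $\cube N$ are cofinal in one another. The only small subtlety is checking that $0 \notin \cube N$, so that $\cube N$ genuinely lives in the underlying set $\Zdz$ of the semigroup.
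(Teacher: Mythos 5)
Your proposal is correct and takes exactly the route the paper intends: the paper itself remarks that the lemma "follows from Lemma \ref{lem:basicfactsofduals} \ref{item:combcharofpwsstar} and the definitions," and you have simply spelled out the two directions — converting the finite syndeticity witness $\{s_1,\ldots,s_k\}$ into a box $\cube N$ in one direction, and observing that $\cube N$ is itself a finite syndeticity witness in the other. The observation that $0 \notin \cube N$, so the box sits inside $\Zdz$, is the right small point to flag, and the definition $C = \{y \in \Zdz \mid F \tone y \subseteq A\}$ in the reverse direction cleanly reduces matters back to Lemma \ref{lem:basicfactsofduals} \ref{item:combcharofpwsstar}. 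No gaps.
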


It is useful to compare this characterization with the combinatorial characterizations for sets in $\thick(\tone)$ and, supposing $(\Zdz,\tone)$ is left amenable, sets in $\density^*(\tone)$:
\begin{align*}
A \in \thick(\tone) &\Leftrightarrow \forall F \in \finitesubsets \Zdz, \ \exists x \in \Zdz, \ F \tone x \subseteq A;\\
A \in \pws^*(\tone) &\Leftrightarrow \forall F \in \finitesubsets \Zdz, \ \exists N \in \N, \ \forall x \in \Zdz, \ \exists z \in \cube N, \ F \tone z \tone x \subseteq A; \\
A \in \density^*(\tone) &\Leftrightarrow \forall \eps > 0, \ \exists F \in \finitesubsets \Zdz, \ \forall x \in \Zdz, \ \big|(F \tone x) \cap A\big| \geq (1-\eps) |F|.
\end{align*}
This shows that sets in the class $\pws^*(\tone)$ can be thought of as multiplicatively ``quantitatively very thick.'' Keep in mind that this property extends by Theorem \ref{thm:thmfrombgotherpaper} to all of the \adive{} $\ipr^*$ sets appearing below.

Times of multiple recurrence of sets of positive density provide a combinatorial source of additive $\ipr^*$ sets. The additive upper Banach density of the set $A \subseteq (\Zd,+)$ can be written as
\[\dstar_+(A) = \sup_{(F_N)_N} \limsup_{N \to \infty} \frac{|A \cap F_N|}{|F_N|} = \limsup_{N \to \infty} \max_{x \in \Zd} \frac{\big| A \cap (x + \{1, \ldots, N\}^d) \big|}{N^d},\]
where the supremum is in the first expression is over all \folner sequences $(F_N)_{N\in\N} \subseteq \finitesubsets \Zd$ (see Remark \ref{rem:folnersequences}). Equality with the second expression follows by Definition \ref{def:density}, Lemma \ref{lem:basicfactsofdensity} \ref{item:dstarandbanachcoincide}, and the fact that for any $B \subseteq \Zd$, there exists a left translation invariant mean $(\Zd,+)$ whose value on the indicator function of $B$ is $\dstar_+(B)$. See \cite[Section 3]{BGpaperone} for a more thorough discussion.

Let $A \subseteq \Z^d$ have positive additive upper Banach density. It is not hard to show that there exists an $r \in \N$ for which the set of times of single recurrence
\[A - A = \{z \in \Z^d \ | \ A \cap (A-z) \neq \emptyset\}\]
is an $\ipr^*$ subset of $\Zd$. In fact, the set $A-A$ is much larger: it is a $\Delta_r^*$ set, meaning that for all $(z_i)_{i=1}^r \subseteq \Z^d$, there exist $i \neq j$ such that $z_j - z_i \in A-A$. 

While the $\Delta_r^*$ property does not hold in general for times of \emph{multiple} recurrence (see \cite[Chapter 9.1]{furstenberg-book}), it was shown in \cite{furstenbergkatznelsonipszem} that such sets are \adive{} $\ipr^*$ sets. When $d=1$ and $T_i = i$ in the following theorem, we recover the $\ipr^*$ formulation of Szemer\'{e}di's theorem mentioned in the introduction.

\begin{theorem}[{\cite[Theorem 10.3]{furstenbergkatznelsonipszem}}]\label{thm:multiszem}
Let $n \in \N$ and $\delta > 0$. There exists $r \in \N$ such that for all $T_1, \ldots, T_n \in \matdz$ and all $A \subseteq \Zd$ with $\dstar_+(A) > \delta$, the set
\[\big\{ z \in \Zd \ \big | \ \text{there exists $x \in \Zd$, for all $i \in \{1, \ldots, n\}$, $x + T_iz \in A$} \big\}\]
is \adly{} $\ipr^*$.
\end{theorem}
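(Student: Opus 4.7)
The plan is to derive this from the measure-theoretic IP multiple recurrence theorem of Furstenberg and Katznelson via the Furstenberg correspondence principle, then upgrade the resulting $\ip^*$ conclusion to the quantitative $\iprset{r}^*$ conclusion by a compactness argument.

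For the first step, fix $T_1, \ldots, T_n \in \matdz$ and $A \subseteq \Zd$ with $\dstar_+(A) > \delta$. The Furstenberg correspondence principle produces a measure-preserving $\Zd$-action $(U_z)_{z \in \Zd}$ on a probability space $(X, \mu)$ and a set $B \subseteq X$ with $\mu(B) \geq \dstar_+(A)$ such that, for every finite tuple $z_1, \ldots, z_k \in \Zd$,
\[ \dstar_+ \Bigl( \bigcap_{j=1}^k (A - z_j) \Bigr) \;\geq\; \mu \Bigl( \bigcap_{j=1}^k U_{z_j}^{-1} B \Bigr). \]
Apply the IP multiple recurrence theorem of \cite{furstenbergkatznelsonipszem} with the $n$ commuting transformations $z \mapsto U_{T_i z}$: for any IP sequence $\{y_\alpha\}_{\alpha \in \mathcal{F}} \subseteq \Zd$ there is an $\alpha$ with $\mu\bigl(\bigcap_{i=1}^n U_{T_i y_\alpha}^{-1} B\bigr) > 0$, and by the correspondence inequality this forces $\bigcap_i (A - T_i y_\alpha) \neq \emptyset$, placing $y_\alpha$ in the return-time set. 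Hence the return-time set is additive $\ip^*$ in $\Zd$ for every fixed choice of $A$ and $(T_i)$.

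To upgrade from $\ip^*$ to $\iprset{r}^*$ with $r$ depending only on $n$, $d$, and $\delta$, I would argue by contradiction. If no such $r$ exists, then for each $r \in \N$ there is a counterexample triple $\bigl((T_i^{(r)})_{i=1}^n, A^{(r)}, (x_k^{(r)})_{k=1}^r\bigr)$ with $\dstar_+(A^{(r)}) > \delta$ and $\text{FS}(x_1^{(r)}, \ldots, x_r^{(r)})$ disjoint from the associated return-time set. Because $\matdz^n$ is countable, the pigeonhole principle furnishes an infinite subsequence of $r$'s along which $(T_i^{(r)})_{i=1}^n$ is constant, equal to some $(T_1, \ldots, T_n)$. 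I would then pass to an ultraproduct along a free ultrafilter $\mathcal{U}$ on $\N$: the internal set $\bar A = \prod_r A^{(r)}/\mathcal{U}$ inside the ultrapower of $\Zd$ inherits Loeb density at least $\delta$, and the elements $\bar x_k = \lim_{r \to \mathcal{U}} x_k^{(r)}$ assemble into a genuine IP sequence $(\bar x_k)_{k \in \N}$ whose entire FS-set is, by construction, disjoint from the internal return-time set for $\bar A$. Applying the first step's $\ip^*$ conclusion inside the Loeb extension (where the Furstenberg correspondence and the IP multiple recurrence theorem transfer without change) then forces some finite FS-sum $\sum_{k \in I} \bar x_k$ into that return-time set, producing the desired contradiction.

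The principal obstacle is the compactness step. A naive diagonalization across $r$ does not work, because the return-time set depends on both $A$ and $(T_i)$ and the witnessing sets $A^{(r)}$ vary with $r$, so one cannot freeze a single $A$ and extract an infinite IP witness classically. The ultraproduct/Loeb construction supplies the required compactness to combine the sequence of finite FS-violations into a single infinite IP-violation; the technical crux is verifying that the correspondence principle and the IP multiple recurrence theorem behave correctly after transfer into the Loeb space. An alternative classical route, the one taken in \cite{furstenbergkatznelsonipszem} itself, replaces the ultraproduct step by an induction on $n$ based on structure theorems for measure-preserving $\Zd$-systems (characteristic factors, SZ-type decompositions), avoiding nonstandard analysis at the cost of substantially heavier ergodic machinery.
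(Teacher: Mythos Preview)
The paper does not supply its own proof of this theorem; it is quoted directly as \cite[Theorem~10.3]{furstenbergkatznelsonipszem}, so there is no in-paper argument to compare against. Your first step---correspondence principle plus the IP multiple recurrence theorem to obtain the $\ip^*$ conclusion for fixed $A$ and fixed $T_1,\ldots,T_n$---is standard and correct.

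The compactness step, however, contains a genuine error. You claim that because $\matdz^n$ is countable, the pigeonhole principle yields an infinite set of $r$'s along which the tuple $(T_i^{(r)})_{i=1}^n$ is constant. This is false: countability of the range is not sufficient for pigeonhole (the sequence $r\mapsto r$ in $\N$ has no constant subsequence). Consequently you cannot freeze the matrices before forming the ultraproduct; they must be carried into the nonstandard construction along with the sets $A^{(r)}$. But then the limit matrices $\bar T_i$ may have nonstandard, unbounded entries, and it is no longer evident that either the correspondence principle or the IP multiple recurrence theorem transfers to give a usable standard conclusion for the transformations $z\mapsto U_{\bar T_i z}$ on the Loeb space. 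This is exactly where the uniformity of $r$ over \emph{all} $T_1,\ldots,T_n\in\matdz$ is doing real work, and your sketch does not address it. The approach in \cite{furstenbergkatznelsonipszem}, which you allude to at the end, sidesteps the issue entirely: the uniformity in $n$, $d$, and $\delta$ is produced directly by the inductive structure-theoretic argument rather than recovered afterward by compactness.
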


Another main source of additive $\ipr^*$ sets arises in topological dynamics and is implicit in the proofs in \cite[Section 1]{bergelsonleibmanams}. We will describe a large class of such sets related to Diophantine approximation and constant-free generalized polynomials; see \cite{BLACTA}. Define inductively a nested sequence $\big(\genpoly_n(\Zd) \big)_{n \in \N}$ of sets of functions $\Z^d \to \R$ by
\begin{align*}
\genpoly_1(\Zd) &= \big\{ x \mapsto c_1 x_1 + \cdots + c_d x_d \ \big| \ c_1, \ldots, c_d \in \R \big\},\\
\genpoly_{n+1}(\Zd) &= \bigcup_{f,g \in \genpoly_{n}(\Zd)} \big\{f+g, f \cdot g, [f] \big\},
\end{align*}
where $[f]$ denotes the function $x \mapsto [f(x)]$, the fractional part of $f(x)$. The set $\genpoly(\Zd) = \cup_{n \geq 1} \genpoly_n(\Zd)$ is the set of \emph{constant-free generalized polynomials on $\Zd$}. In addition to including polynomials in $\R[x_1,\dots,x_d]$ with zero constant term, this set includes (when $d=2$) functions such as $(x_1,x_2) \mapsto \pi x_1 \big[ \sqrt{2} x_2^2 \big]$, $(x_1,x_2) \mapsto e^2(x_1-x_2^3)\big[\sqrt[3] 2 x_1^7 [0.5 x_2] \big]$, and the function described in the set in $(\ref{eqn:impressiveexample})$ inside the $\|\cdot\|$, the distance to the nearest integer function.

\begin{theorem}[{\cite[Theorem 0.4]{BLiprstarcharacterization}}]\label{thm:genpolyshaveiprstarreturns}
Let $n \in \N$ and $\eps > 0$. There exists $r \in \N$ such that for all $f \in \genpoly_n(\Zd)$, the set
\[ \big\{ x \in \Zd \ \big| \ \|f(x)\| < \eps \big\}\]
is \adly{} $\ipr^*$.
\end{theorem}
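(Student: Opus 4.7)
My plan is to prove by induction on $n$ a strengthened statement that allows simultaneous Diophantine approximation of finitely many generalized polynomials. Precisely, I aim to show that for all $n, k \in \N$ and $\eps > 0$, there exists $r \in \N$ such that for all $f_1, \ldots, f_k \in \genpoly_n(\Zd)$, the set $\bigcap_{i=1}^k \{x \in \Zd : \|f_i(x)\| < \eps\}$ is \adly{} $\ipr^*$. The recursive definition of $\genpoly_n$ makes this strengthening natural: to handle a product or integer part at level $n+1$, one must be able to control several approximations at level $n$ simultaneously, and this is made legal by the filter property of $\ipr^*$ sets mentioned in the introduction.

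The base case $n=1$ concerns linear functions $f_i(x) = \sum_j c_{i,j} x_j$, for which the intersection of the sub-level sets is a Bohr neighborhood in $\Zd$. Bohr neighborhoods are return-time sets for group rotations on finite-dimensional tori, and since group rotations are degree-one nilsystems, the $\ipr^*$ recurrence property for nilsystems discussed in the introduction applies, with $r$ depending only on $k$, $d$, and $\eps$ via a Weyl-equidistribution and compactness argument (not on the particular coefficients $c_{i,j}$).

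For the inductive step, I would decompose each $f_i \in \genpoly_{n+1}(\Zd)$ by its outermost operation. Sums are direct: $\|g+h\| < \eps$ follows from $\|g\| < \eps/2$ and $\|h\| < \eps/2$, so the inductive hypothesis applied to $2k$ polynomials at tolerance $\eps/2$ suffices. Integer parts are trivial since $\|[g(x)]\| = 0$ for every $x$, making the sub-level set all of $\Zd$. The decisive case is products $f_i = g_i \cdot h_i$, for which smallness of $\|g_i(x)\|$ and $\|h_i(x)\|$ individually does not imply smallness of $\|g_i(x) h_i(x)\|$.

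To handle products, I would invoke the representation theorem of Bergelson--Leibman \cite{BLACTA}, which realizes each constant-free generalized polynomial $f_i$ as a bounded Riemann-integrable function sampled along a $\Z^d$-orbit on a nilmanifold whose step is bounded in terms of $n+1$. Under this representation, $\{x : \|f_i(x)\| < \eps\}$ contains the return times of the orbit to an open neighborhood on the nilmanifold, so the characterization of $\ipr^*$ sets as return times in nilsystems proved in \cite{BLiprstarcharacterization} yields the conclusion, with $r$ depending only on the step and dimension of the nilmanifold and on the diameter of the target neighborhood. The principal obstacle is uniformity of $r$: the step and dimension of the nilmanifold representing $f_i$ must be bounded in terms of $n$ alone, uniformly in the coefficients of $f_i$. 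This uniformity is ultimately combinatorial and should follow from the polynomial Hales--Jewett theorem \cite{bergelsonleibmanannals}, which provides a uniform bound on the IP configurations needed to force any given finite coloring statement, independent of the specific polynomial being approximated.
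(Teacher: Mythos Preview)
This theorem is not proved in the present paper; it is quoted from \cite[Theorem~0.4]{BLiprstarcharacterization}, and the surrounding text only remarks that the proof there (in a stronger form) is carried out via the polynomial Hales--Jewett theorem of \cite{bergelsonleibmanannals}. So there is no ``paper's own proof'' to compare against beyond that pointer to the method.

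That said, your plan has a structural problem: it is essentially circular. To handle the product case you invoke the nilmanifold representation of \cite{BLACTA} together with the $\ipr^*$ recurrence property for nilsystems, and you cite the latter as ``proved in \cite{BLiprstarcharacterization}.'' But \cite{BLiprstarcharacterization} is precisely the paper whose Theorem~0.4 you are trying to prove, and the nilsystem recurrence result is the main content of that paper, established through the polynomial Hales--Jewett machinery you mention only at the very end. Your base case likewise appeals to ``the $\ipr^*$ recurrence property for nilsystems discussed in the introduction,'' which is again \cite{BLiprstarcharacterization}. The inductive scaffold (strengthen to simultaneous approximation, split by outermost operation) is reasonable bookkeeping, but every place where real work is needed---uniformity of $r$ in the base case, the product step, the bound on nilmanifold step and dimension---is deferred to black boxes that are at least as deep as the theorem itself and come from the same source. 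A self-contained argument would need to engage the polynomial Hales--Jewett theorem directly, as the paper indicates.

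A minor point: in this paper $[f]$ is defined as the \emph{fractional part} of $f$, not the integer part, so $\|[g(x)]\|=\|g(x)\|$ rather than $0$. Under that reading the bracket case is not trivially all of $\Zd$ but reduces immediately to level $n$, which your induction still absorbs; just be aware that your justification for that step does not match the stated definition.
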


Since the intersection of $\ipnaught^*$ sets is $\ipnaught^*$ (see \cite[Proposition 2.5]{BRcountablefields}), the conclusion of this theorem holds just as well for functions $\Zd \to \R^m$ which consist of a constant-free generalized polynomial in each coordinate.  The special case of Theorem \ref{thm:genpolyshaveiprstarreturns} for polynomials with zero constant term appears as \cite[Theorem 7.7]{BergelsonSurveytwoten}, where it is proved using the Hales-Jewett theorem. The polynomial Hales-Jewett theorem \cite{bergelsonleibmanannals} is used to prove \cite[Theorem 0.4]{BLiprstarcharacterization} in a much stronger form than is stated in Theorem \ref{thm:genpolyshaveiprstarreturns} above.

Theorem \ref{thm:genpolyshaveiprstarreturns} is a finitary analogue of \cite[Theorem D]{BLACTA}. In that paper, Theorem D was used to prove an $\ip^*$-improvement of a classical result of van der Corput \cite[Satz 11]{vanderCorputtwo}. By the same proof given in \cite{BLACTA}, the following $\ipr^*$-improvement of van der Corput's result follows from Theorem \ref{thm:genpolyshaveiprstarreturns}.

\begin{theorem}[{cf. \cite[Theorem 0.34]{BLACTA}}]
Let $k,n \in \N$ and $\eps > 0$. There exists $r \in \N$ such that for all $f_i \in \genpoly_n(\Z^{d+i-1})$, $i = 1, \ldots, k$, the set of $x \in \Zd$ for which there exist $m_1, \ldots, m_k \in \Z$ satisfying
\[\big|f_1(x) - m_1 \big| < \eps, \ \big|f_2(x,m_1) - m_2 \big| < \eps, \ \ldots, \ \big|f_k(x,m_1,\ldots,m_{k-1}) - m_k \big| < \eps,\]
is \adly{} $\ipr^*$.
\end{theorem}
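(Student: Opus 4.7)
My approach mirrors the strategy used in \cite{BLACTA} to deduce the $\ip^*$ refinement of van der Corput's theorem from its measure-theoretic input: I reduce the ``there exist $m_1, \ldots, m_k$'' assertion to an $\ipr^*$ statement about a \emph{fixed}, finite collection of single-variable constant-free generalized polynomials of $x$, to which Theorem \ref{thm:genpolyshaveiprstarreturns} can be directly applied. Assume $\eps < 1/4$, so that any integer within $\eps$ of a given real $y$ is unique and equals either $\lfloor y \rfloor = y - [y]$ or $\lceil y \rceil = y + [-y]$ (where the paper's $[\cdot]$ denotes the fractional part); both of these operations take a constant-free generalized polynomial to a constant-free generalized polynomial of only slightly higher level.

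The first step is to build a ``rounding tree'' of generalized polynomials. For each choice sequence $\sigma = (\sigma_1, \ldots, \sigma_{k-1}) \in \{L, U\}^{k-1}$, I define $\tilde f_1^{\sigma}(x) = f_1(x)$ and, for $1 \leq i \leq k-1$, recursively
\[ m_i^{\sigma}(x) = h^{\sigma_i}\bigl(\tilde f_i^{\sigma}(x) \bigr), \qquad \tilde f_{i+1}^{\sigma}(x) = f_{i+1}\bigl( x, m_1^{\sigma}(x), \ldots, m_i^{\sigma}(x) \bigr), \]
with $h^L(y) := y - [y]$ and $h^U(y) := y + [-y]$. Each $\tilde f_i^{\sigma}$ lies in $\genpoly_{n'}(\Z^d)$ for some $n' = n'(n,k)$. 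Applying Theorem \ref{thm:genpolyshaveiprstarreturns} to each of the finitely many $\tilde f_i^{\sigma}$ with tolerance $\eps$, together with the filter property of $\ipr^*$ sets from \cite[Proposition 2.5]{BRcountablefields}, furnishes some $r = r(n, k, \eps)$ such that
\[ R := \bigcap_{i=1}^{k} \bigcap_{\sigma \in \{L,U\}^{k-1}} \bigl\{ x \in \Z^d \ \big| \ \|\tilde f_i^{\sigma}(x)\| < \eps \bigr\} \]
is \adly{} $\ipr^*$ in $\Z^d$.

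To finish, I would show that $R$ is contained in the target set, which suffices by upward closure of $\ipr^*$. Given $x \in R$, I construct $\sigma^*(x) \in \{L, U\}^{k-1}$ and $m_i(x)$ jointly by greedy induction on $i$: having fixed $\sigma^*_1, \ldots, \sigma^*_{i-1}$, compute $\tilde f_i^{\sigma^*_{<i}}(x)$, choose $\sigma_i^*$ to round toward the nearer integer (this is meaningful for $i < k$; for $i = k$ only the final rounding is needed and no additional coordinate of $\sigma^*$ is required), and set $m_i(x) := m_i^{\sigma^*}(x)$. Unpacking the recursive definition gives $\tilde f_i^{\sigma^*_{<i}}(x) = f_i(x, m_1(x), \ldots, m_{i-1}(x))$, and because $\tilde f_i^{\sigma}$ depends only on $\sigma_1, \ldots, \sigma_{i-1}$, the sequence $\sigma^*(x)$ enters the intersection defining $R$, so the inequality $\|\tilde f_i^{\sigma^*(x)}(x)\| < \eps$ is automatic. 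Since $\eps < 1/4$, the greedy choice of $\sigma_i^*$ then guarantees $|m_i(x) - f_i(x, m_1(x), \ldots, m_{i-1}(x))| < \eps$, as desired.

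The main obstacle is precisely the unavailability of the nearest-integer operation within the constant-free class: writing $y \mapsto y - [y + 1/2]$ would introduce the forbidden constant $1/2$. One must therefore carry both the floor and ceiling rounding candidates through every recursive substitution, producing the $2^{k-1}$ branching in the definition of $R$. This does not blow up $r$ because Theorem \ref{thm:genpolyshaveiprstarreturns} yields $r$ depending only on the level $n'$ and on $\eps$, so a finite intersection of such sets remains \adly{} $\ipr^*$ with $r$ depending only on $n$, $k$, and $\eps$.
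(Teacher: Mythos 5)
Your proof is correct, and it reconstructs the argument that the paper itself outsources to \cite{BLACTA} (the paper's ``proof'' consists solely of the sentence ``By the same proof given in \cite{BLACTA}, \ldots follows from Theorem \ref{thm:genpolyshaveiprstarreturns}''). The key moves are all present and in the right order: reduce to $\eps < 1/4$; note that any admissible $m_i$ must equal $\lfloor f_i(\cdot)\rfloor$ or $\lceil f_i(\cdot)\rceil$; carry both rounding branches through every substitution so that the composite functions $\tilde f_i^\sigma$ become a \emph{fixed} finite family of constant-free generalized polynomials of $x$ alone, of level bounded by some $n'(n,k)$; apply Theorem \ref{thm:genpolyshaveiprstarreturns} to each and intersect using the filter property of $\ipr^*$ sets from \cite[Proposition 2.5]{BRcountablefields}; and finish with the greedy construction of $\sigma^*(x)$ inside the intersection $R$. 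Your closing observation about why the $2^{k-1}$ branching is forced -- the nearest-integer function $y \mapsto y - [y+1/2]$ introduces the forbidden constant $1/2$ -- is precisely the point of the ``constant-free'' hypothesis and is a useful thing to have made explicit. One small remark: you tacitly use that the class $\genpoly$ is closed under negation and under multiplying a member by a real scalar (needed both for $h^U(y) = y + [-y]$ and for substituting the integer-valued $m_j^\sigma$ into the linear forms appearing in $f_{i+1}$). The paper's displayed recursion for $\genpoly_{n+1}$ literally lists only $f+g$, $f\cdot g$, and $[f]$, which does not obviously yield $-[f]$; this is a (minor) imprecision in the paper's definition rather than a gap in your argument, but it is worth flagging if you present the proof.
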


For further discussion regarding $\ipr^*$ sets and return times in measure theoretical and topological dynamics, the reader is referred to \cite[Section 6]{BGpaperone}.


\bibliographystyle{alphanum}
\bibliography{multizdbib}

\end{document}